\providecommand{\R}{}
\providecommand{\N}{}
\renewcommand{\R}{\mathbb{R}}
\renewcommand{\N}{{\mathbb N}}
\newcommand{\set}[1]{\left\{ #1 \right\}}
\newcommand{\Esub}[2]{{\mathbf E_{#1}}\left[#2\right]}
\newcommand\cA{\mathcal A}
\newcommand\cB{\mathcal B}
\newcommand\cC{\mathcal C}
\newcommand\cF{\mathcal F}
\newcommand\cL{{\mathfrak L}} 
\newcommand\cP{\mathcal P}
\newcommand{\bE}{\mathbf{E}}
\newcommand{\bP}{\mathbf{P}} 
\newcommand{\bQ}{\mathbf{Q}} 
\newcommand{\bR}{\mathbf{R}}
\newcommand{\f}{\mathfrak{f}} 
\newcommand{\g}{\mathfrak{g}} 
\newcommand{\h}{\mathfrak{h}} 
\newcommand{\pran}[1]{\left(#1\right)}
\newcommand{\brac}[1]{\left[#1\right]}
\newcommand{\inn}[1]{\langle#1\rangle}
\newcommand{\abs}[1]{\left|#1\right|}
\newcommand{\norm}[1]{\left\|#1\right\|}
\newcommand{\KL}[2]{\mathrm{KL}(#1||#2)}
\providecommand{\eps}{}
\renewcommand{\eps}{\varepsilon}
\providecommand{\ora}[1]{}
\renewcommand{\ora}[1]{\overrightarrow{#1}}
\newcommand\one{\mathbbm{1}}
\newcounter{Scounter}
\newtheorem{thm}{Theorem}
\newtheorem{lem}[thm]{Lemma}
\newtheorem{prop}[thm]{Proposition}
\newtheorem{cor}[thm]{Corollary}
\newtheorem{dfn}[thm]{Definition}
\numberwithin{thm}{section}
\numberwithin{equation}{section}
\theoremstyle{remark}
\newcommand{\customlabel}[2]{%
   \protected@write \@auxout {}{\string \newlabel {#1}{{#2}{\thepage}{#2}{#1}{}} }%
   \hypertarget{#1}{}
}
\definecolor{clc}{rgb}{0,0.53,0.74}
\definecolor{caz}{rgb}{0,0.5,0}
\begin{document}
\title{Schr\"odinger Bridge Problem for Jump Diffusions}
\author{Andrei Zlotchevski}
\address{Department of Mathematics and Statistics, McGill University, 
Montreal, Canada}
\thanks{We acknowledge the support of the Natural Sciences and Engineering Research Council of Canada (NSERC), awards 559387-2021 and 241023.}
\email{andrei.zlotchevski@mail.mcgill.ca}
\author{Linan Chen}
\email{linan.chen@mcgill.ca}
\begin{abstract} 
    The Schr\"odinger bridge problem (SBP) seeks to find the measure $\hat\bP$ on a certain path space which interpolates between state-space distributions $\rho_0$ at time $0$ and $\rho_T$ at time $T$ while minimizing the KL divergence (relative entropy) to a reference path measure $\bR$. In this work, we tackle the SBP in the case when $\bR$ is the path measure of a jump diffusion. Under mild assumptions, with both the operator theory approach and the stochastic calculus techniques, we establish an $h$-transform theory for jump diffusions and devise an approximation method to achieve the jump-diffusion SBP solution $\hat\bP$ as the strong-convergence limit of a sequence of harmonic $h$-transforms. To the best of our knowledge, these results are novel in the study of SBP. Moreover, the $h$-transform framework and the approximation method developed in this work are robust and applicable to a relatively general class of jump diffusions. In addition, we examine the SBP of particular types of jump diffusions under additional regularity conditions and extend existing results from the diffusion case to the jump-diffusion setting. 
\end{abstract}
\keywords{Schr\"odinger bridges, $h$-transform, jump diffusions, non-local L\'evy-type operators, KL divergence}
\subjclass{35Q93, 45K05, 60H10, 60H20, 60H30, 94A17}
\maketitle
\section{Introduction}
In this introduction, we will cover preliminaries on the long-celebrated Schr\"odinger Bridge Problem (SBP) and the framework of jump-diffusion processes under which our work is conducted. We will also explain our main results and the organization of the paper.
\subsection{Schr\"odinger Bridge Problem: Formulation and Motivation}\label{subsection:SBP motivation}
The original version of this nearly century-old problem was formulated by Erwin Schr\"odinger in \cite{schrodinger1931umkehrung, schrodinger1932theorie}. In Schr\"odinger's thought experiment, a large number of hot gaseous particles are observed at time $t = 0$ to have initial distribution $\rho_0$. The observer's prior belief about the behavior of these particles is that their position should evolve according to a Markov process with transition density $q(s,x,t,y)$ for $0\leq s \leq t \leq T$. As such, by the law of large numbers, the particle density at a later time $T$ should be roughly equal to
\[
\tilde{\rho}_T(\cdot) \approx \int_{\R^n}
q(0, x, T, \cdot )\rho_0(x)dx.
\]
Suppose however that the actual particle density  $\rho_T$ observed at time $T$ vastly differs from the expected distribution $\tilde{\rho}_T$. Schr\"odinger believed that a rare event had occurred and that one must find the ``most probable explanation'' for it. Working from intuition and without a precise statement of the problem, Schr\"odinger nevertheless correctly deduced that the sought explanation lies in the solution to a pair of integral equations, which would be later known as the \textit{Schr\"odinger system}.

More than fifty years after Schr\"odinger's work, F\"ollmer obtained in \cite{follmer1988random} a rigorous formulation of the SBP using the theory of large deviations \cite{dembo1998zeitouni}. To formulate the modern version of the SBP, we need to introduce some terminology. Let $\Omega=D([0,T];\R^n)$ be the collection of all c\`adl\`ag functions $\omega=(\omega(t))_{0\leq t\leq T}$ from the time interval $[0,T]$ to $\R^n$. This is the space of trajectories, and $\omega\in \Omega$ is a \textit{sample path}. Let $\Omega$ be equipped with the Skorokhod metric.
The resulting Borel $\sigma$-algebra $\cF:=\cB_{\Omega}$ is generated by the time projections $X_t:\omega\in\Omega\mapsto\omega(t)\in \R^n$ for $t\in[0,T]$. Denote by $\cP(\Omega)$ the set of probability measures on $(\Omega,\cF)$. We will refer to $\bR\in\cP(\Omega)$ as a \textit{path measure} and $\set{X_t}_{[0,T]}$ the stochastic process given by the time projections as the \textit{canonical process} under $\bR$. Let $\set{\cF_t}_{[0,T]}$ be the natural filtration generated by $\set{X_t}_{[0,T]}$ and, whenever necessary, we will adopt the \textit{usual augmentation} \cite[II-67]{rogers2000diffusionsVOL1} of this filtration (by $\bR$ or another path measure), which we continue writing as $\set{\cF_t}_{[0,T]}$.

Let $\omega_1,...,\omega_N \in \Omega$ be independently sampled paths according to a ``reference'' path measure $\bR\in\cP(\Omega)$, and denote their empirical distribution by $Z_N$. Then as $N\to\infty$, Sanov's theorem \cite{sanov1961probability} gives the (asymptotic) likelihood that the empirical distribution of these paths deviates from its true distribution $\bR$:
\begin{equation*}
    \text{Prob}(Z_N\in A) \approx \exp(-N\inf_{\bP\in A} \KL{\bP}{\bR}) \text{ for } A \subseteq \cP(\Omega),
\end{equation*}
where $\KL{\cdot}{\cdot}$ is the \textit{Kullback-Leibler divergence} (KL divergence), also known as the \textit{relative entropy}. For probability measures $\mu$ and $\nu$, it is defined to be
\begin{equation*}
    \KL{\mu}{\nu} := \begin{cases}
        \displaystyle \Esub{\mu}{\log\pran{\frac{d\mu}{d\nu}}} & \text{if }\mu \ll \nu,\\
        +\infty & \text{otherwise.}
    \end{cases}
\end{equation*}
In other words, if a large deviation event described by $A$ has occurred (such as particles having distribution $\rho_T$ instead of $\tilde{\rho}_T$), finding the ``most probable explanation'' envisioned by Schr\"odinger corresponds to finding the minimizer in the decay exponent $\inf_{\bP\in A} \KL{\bP}{\bR}$. Therefore, the modern formulation of the SBP aims to bridge the two endpoint distributions $\rho_0,\rho_T$ while staying as close as possible (in the sense of KL divergence) to the reference path measure $\bR$.\\

For a path measure $\bP\in\cP(\Omega)$, let $\bP_t\in \cP(\R^n)$ be its marginal distribution at time $t$, meaning that for any Borel $B\subseteq\R^n$, $\bP_t(B)=\bP(\set{X_t\in B})$. The modern formulation of the SBP states:
\begin{dfn}[Schr\"odinger Bridge Problem]
Given a reference path measure $\bR\in \cP(\Omega)$ and target endpoint distributions $\rho_0,\rho_T \in \cP(\R^n)$, determine
\begin{equation}\label{Definition:DynamicSBP}
    \hat{\bP}=\arg\min_{\bP \in \cP(\Omega) }\set{\KL{\bP}{\bR} \text{ such that } \bP_0=\rho_0,\bP_T=\rho_T }.
\end{equation}

We say that the SBP does not have a solution if the ``\textup{min}'' in \eqref{Definition:DynamicSBP} is $\infty$.
\end{dfn}
The optimizer $\hat\bP$ is called the \textit{Schr\"odinger bridge} between $\rho_0$ and $\rho_T$.
Early work on the SBP considered the reference measure $\bR$ being the law of a standard Brownian motion and later generalized to the case when $\bR$ arises from an $\R^n$-valued diffusion process that solves\footnote{In this work, only the weak solution of SDE is concerned.} a general \textit{It\^o Stochastic Differential Equation} (SDE)
\[
dX_t=b(t,X_t)dt+\sigma(t,X_t)dB_t,
\]
where $\set{B_t}_{t\geq0}$ is a standard $m$-dimensional Brownian motion. In the present work, we extend this framework to consider diffusion processes with jumps. More specifically, let $\nu$ be a $\sigma$-finite measure on $\R^\ell$ that satisfies\footnote{Throughout, the notation ``$a\wedge b$'' refers to $\min\{a,b\}$.}
\begin{equation*}
    \nu(\set{0})=0 \text{ and } \int_{\R^\ell} (1 \wedge |z|^2) \nu(dz) <\infty.
\end{equation*} 
We consider $\bR\in\cP(\Omega)$ being the law of the solution to the following \textit{L\'evy-It\^o SDE}:
\begin{equation}
\label{ReferenceLevyIto}
 \begin{aligned}   dX_t=b(t,X_{t})&dt+\sigma(t,X_{t})dB_t \\&+ \int_{|z|\leq 1}\gamma(t,X_{t-} ,z)\tilde{N}(dt,dz)
 +
    \int_{|z|>1}\gamma(t,X_{t-} ,z)N(dt,dz),
\end{aligned}
\end{equation}
where $\set{B_t}_{t\geq 0}$ is a standard $m$-dimensional Brownian motion, $\{N(dt,dz)\}_{t\geq 0}$ is an $\ell$-dimensional jump measure associated with a Poisson process with intensity $\nu(dz)$, $\tilde{N}(dt,dz)$
is the compensated jump measure, i.e., $\tilde{N}(dt,dz):= N(dt,dz)-\nu(dz)dt$, and $\set{B_t}_{t\geq 0}$ and $\set{N(dt,dz)}_{t\geq 0}$ are independent. The coefficients $\sigma$, $b$ and $\gamma$ are required to satisfy some mild conditions, which will be stated later, to guarantee that all the stochastic integrals in \eqref{ReferenceLevyIto} are well defined.
The solution to the SDE \eqref{ReferenceLevyIto}, whenever it exists, is called a \textit{jump diffusion}, also called \textit{jump-diffusion process} or \textit{diffusion process with jumps} in the literature. To the best of our knowledge, the SBP for general jump diffusions has not been comprehensively studied.

In addition to the SDE \eqref{ReferenceLevyIto} representation, we can also describe the reference measure $\bR$ through its associated time-dependent integro-differential operator $L$ as, for $f\in C^{1,2}(\R^+\times \R^n)$,
\begin{equation}\label{ReferenceGenerator}
\begin{aligned}
    Lf(t,x) :=& \,b(t,x)\cdot\nabla f (t,x)+\frac{1}{2}\sum_{i,j}(\sigma\sigma^T)_{ij}(t,x)\frac{\partial^2f}{\partial x_i \partial x_j}(t,x) \\
    &+\int_{\R^\ell} \brac{f(t,x+\gamma(t,x,z))-f(t,x)-\one_{|z|\leq 1}\gamma(t,x,z)\cdot\nabla f(t,x)}\nu(dz).    
\end{aligned}
\end{equation}
Throughout this work, the symbol $\nabla$ (or $\nabla^2$) denotes the gradient (or the Hessian matrix) with respect to the spatial variable $x$ only. We call an operator of the form \eqref{ReferenceGenerator} a \textit{jump-diffusion operator} and we refer to $L$ as the \textit{generator} of $\bR$, or equivalently, the generator of the SDE \eqref{ReferenceLevyIto}. We also define $\cL:=\frac{\partial}{\partial t}+L$.
Throughout this work, we impose the following conditions on the coefficients $\sigma$, $b$ and $\gamma$:
\paragraph{\textbf{Assumptions (B)}}\customlabel{Assumptions B}{\textbf{(B)}}
\begin{enumerate}
    \item 
    $b=(b_i)_{1\leq i\leq n}:\R^+\times\R^n \to \R^n$ is measurable and locally bounded.
    \item 
    $\sigma=(\sigma_{ij})_{1\leq i\leq  n,1\leq j \leq m} : \R^+\times\R^n \to\R^{n\times m}$ is measurable and locally bounded, and $(\sigma\sigma^T)(t,x)$ is non-negative definite for every $(t,x)\in\R^+\times\R^n$.
    \item 
    $\gamma=(\gamma_i)_{1\leq i\leq n} :\R^+\times\R^n\times\R^\ell \to \R^n$ is measurable, $\gamma(t,x,0)\equiv 0$ for every $t\geq 0$ and $x\in\R^n$, and, as a function in $(t,x)$, $\sup_{z\in \mathbb{R}^\ell}\frac{|\gamma(t,x,z)|}{1\wedge|z|}$ is locally bounded.
\end{enumerate}
The above conditions are rather mild, merely imposed for the well-definedness of the integrals in \eqref{ReferenceLevyIto} and \eqref{ReferenceGenerator}, as well as of other (stochastic) integrals we will encounter later in this study. In fact, as we will see in Section \ref{subsection:main contribution}, our main results are established under another set of assumptions that are not stated directly in terms of $\sigma,b,\gamma$. In this way, we aim at establishing a general theory that is not restricted to SDEs or generators with any particular type of coefficients.

\subsection{Main Results}\label{subsection:main contribution}
We consider the SBP \eqref{Definition:DynamicSBP} for reference measure $\bR$ being the law of the jump diffusion associated with the SDE  \eqref{ReferenceLevyIto} or the operator \eqref{ReferenceGenerator} and target endpoint distributions $\rho_0,\rho_T$. As we will review in Section \ref{Section:DiffusionReview}, when $\bR,\rho_0,\rho_T$ satisfy some mild conditions (see Theorem \ref{Theorem:fg-existence} for details), \eqref{Definition:DynamicSBP} admits a unique solution $\hat\bP$. Our goal is to conduct a comprehensive study of $\hat\bP$ in the jump-diffusion setting and to develop necessary theoretical elements and technical tools for the interpretation and the analysis of $\hat\bP$. The main challenge we face is the scarcity of the regularity theory for jump diffusions in comparison with diffusions. Many important properties (e.g., hypoellipticity and heat kernel estimates) are far less understood for general jump diffusions. As a consequence, the extension of the literature on SBP from the diffusion case to the jump-diffusion setting requires careful treatment. 

It will also become clear in the review section that  $\hat\bP$ is closely tied to the notion of \textit{$h$-transform}. Therefore, to interpret $\hat\bP$ properly, we first develop the theory of $h$-transform for general jump diffusions. The topic of $h$-transform is of independent interest on its own. Then, upon establishing $\hat\bP$ in the context of $h$-transform, we further investigate conditions under which $\hat\bP$ possesses similar properties as its counterpart in the diffusion case. We also propose an approximation approach toward $\hat\bP$ under mild regularity requirements.\\

\noindent \textbf{Assumptions and Results:} We work closely but separately with the SDE \eqref{ReferenceLevyIto} and the operator \eqref{ReferenceGenerator}, as they offer two different angles through which we can examine the SBP and its solution $\hat\bP$. Furthermore, instead of putting concrete constraints on the coefficients, we adopt basic assumptions that a large family of SDEs and operators are expected to satisfy. 

We start with the assumption on the operator, as this is our most general assumption. We assume that the jump-diffusion operator $L$ in \eqref{ReferenceGenerator} satisfies the following:
\begin{enumerate}[label=(\textbf{A1})]
    \item \label{Assumption:MartingaleProblem-s,x} The martingale problem for $L$ has a strong Markov solution in $\cP(\Omega)$. Namely, for every $(s,x)\in[0,T]\times\R^n$, there exists a path measure $\bP^{s,x}\in\cP(\Omega)$ such that $\bP^{s,x}(\set{X_s=x})=1$ and for every $\phi \in C_c^{1,2}(\R^+ \times \R^n)$,
    \begin{equation*}
        \set{\phi(t,X_t)-\phi(s,x)-\int_s^t(\frac{\partial}{\partial r}+L)\phi(r,X_r)dr \,:\,s\leq t \leq T}
    \end{equation*}
    under $\bP^{s,x}$ is a martingale with respect to $\set{\cF_t}_{[s,T]}$; further, the family $\set{\bP^{s,x}}_{[0,T]\times\R^n}$ is strong Markovian in the sense that for every $s\in[0,T]$ and stopping time $\tau\in[s,T]$, the regular conditional probability distribution of $\bP^{s,x}$ conditioning on $\cF_\tau$ is given by $\bP^{\tau,X_\tau}$. 
\end{enumerate}
Note that under Assumption \ref{Assumption:MartingaleProblem-s,x}, given any $\rho_0\in \cP(\R^n)$, there exists a path measure $\bP\in\cP(\Omega)$ such that $\bP_0=\rho_0$, and $\set{X_t}_{[0,T]}$ under $\bP$ is a strong Markov process with transition distribution 
    \begin{equation*}
        P_{s,t}(x,dy):=\bP(X_t \in dy | X_s=x)=\bP^{s,x}(X_t\in dy)
    \end{equation*} 
    for every $0 \leq s \leq t \leq T$ and $x\in\R^n$. In this case we also refer to $\bP$ as a \textit{solution to the martingale problem} for $(L,\rho_0)$.

Assumption \ref{Assumption:MartingaleProblem-s,x} is already sufficient for us to establish a basic $h$-transform theory. In Section \ref{Subsection:Generator Approach}, under \ref{Assumption:MartingaleProblem-s,x}, we propose a definition (see Definition \ref{Definition:h-transform,general case with r0}) of the $h$-transform of $\bP$, formally as $\bP^h:=\frac{h(T,X_T)}{h(0,X_0)}\bP$, for $\bP$ being the solution to the martingale problem for $(L,\rho_0)$ and $h$ being a non-negative function that satisfies the \textit{mean-value property} (see Definition \ref{def:meanvalueproperty}). Our main result for this part is \textbf{Theorem \ref{Theorem:MartingaleProblem-Lh}} where we prove that, if in addition $h$ is a \textit{harmonic function} (see Definition \ref{Definition:HarmonicFunction}), then $\bP^h$ solves the martingale problem for another jump-diffusion operator $L^h$ equipped with an appropriate initial endpoint distribution $\rho^h_0$. We also explicitly determine $L^h$ as mapped from $L$ through a translation in the drift coefficient and a rescaling in the jump component.\\

Next, we turn to the SDE approach and adopt the following assumption:
\begin{enumerate}[label=(\textbf{A2})]
    \item \label{Assumption:SDEsolution} The SDE \eqref{ReferenceLevyIto} with initial condition $\rho_0$ admits a solution that is a strong Markov process with c\`adl\`ag sample paths.  To be accurate, there exists a filtered probability space $(\tilde\Omega,\tilde\cF,{\{\tilde\cF_t\}},\tilde\bP)$ on which \eqref{ReferenceLevyIto} is satisfied by $\{\tilde\cF_t\}$-adapted $\{\tilde X_t,B_t,N(dt,dz)\}$ such that $\set{B_t}_{[0,T]}, \set{N(dt,dz)}_{[0,T]}$ are as described in Section \ref{subsection:SBP motivation} and $\{\tilde X_t\}_{[0,T]}$ is a strong Markov process with c\`adl\`ag sample paths and $\tilde X_0\sim\rho_0$.
\end{enumerate}
Under \ref{Assumption:SDEsolution}, since $\{\tilde X_t\}_{[0,T]}$ has sample paths in $\Omega$, its law under $\tilde\bP$ yields a path measure $\bP\in\cP(\Omega)$ whose canonical process $\set{X_t}_{[0,T]}$ is again strong Markovian. With a slight abuse of notation, we will identify $(\{\tilde X_t\}_{[0,T]},\tilde \bP)$ with $(\set{X_t}_{[0,T]},\bP)$, and refer to $(\set{X_t}_{[0,T]},\bP)$ as the solution to \eqref{ReferenceLevyIto}. 

Assumption \ref{Assumption:SDEsolution} enables us to re-examine $\bP^h$, the $h$-transform of $\bP$, with stochastic calculus tools. In particular, under the same assumption that $h$ is non-negative, harmonic and with the mean-value property, we establish in \textbf{Theorem \ref{GirsanovHTransform}} that $\bP^h$ can be obtained through a Girsanov transformation from $\bP$, and we further state in \textbf{Corollary \ref{Theorem:SDEunderPh}} that the canonical process under $\bP^h$, along with properly constructed Brownian motion and jump measure, is the solution to a new L\'evy-It\^o SDE in the form of (\ref{ReferenceLevyIto}) whose coefficients match those of $L^h$ derived in the generator approach. This confirms that the interpretations of $\bP^h$ from the generator approach and the SDE approach are consistent with one another. \\

In Section \ref{Section:RelationSBP-htransform}, we return to the SBP for jump diffusions. In order to connect the SBP solution $\hat\bP$ with the theory of $h$-transforms, in addition to \ref{Assumption:MartingaleProblem-s,x} (or \ref{Assumption:SDEsolution}), we also impose the following assumption: 
\begin{enumerate}[label=(\textbf{A3})] 
    \item \label{Assumption:Hypo} Let $P_{s,t}(x,\cdot)$, $0\leq s\leq t\leq T$, $ x\in\R^n$, be the transition distribution of $\set{X_t}_{[0,T]}$ under $\bR$. For any $g \in C_c^\infty(\R^n)$, 
    if $h:[0,T]\times\R^n\to\R$ is the function given by 
    \begin{equation}\label{eq:integral def of harmonic h by g in Cc}
         h(t,x):=\int_{\R^n}g(y)P_{t,T}(x,dy)\text{ for }(t,x)\in[0,T]\times \R^n,
    \end{equation}
    then $h$ is \textit{of class $C^{1,2}$} on $[0,T]\times\R^n$ in the sense that $h,\nabla h,\nabla^2h\in C([0,T]\times\R^n)$ and $\frac{\partial}{\partial t}h\in C((0,T)\times \R^n)$. Moreover, $h$ satisfies $(\frac{\partial}{\partial t}+L) h=0$ on $(0,T)\times\R^n$. 
\end{enumerate}
What \ref{Assumption:Hypo} in fact entails is that, for every (non-negative) $g\in C^\infty_c(\R^n)$, $h$ defined in (\ref{eq:integral def of harmonic h by g in Cc}) is a (non-negative) harmonic function with the mean-value property. Therefore, the integral in (\ref{eq:integral def of harmonic h by g in Cc}) produces natural ``candidates'' for performing $h$-transforms.

Back to the SBP \eqref{Definition:DynamicSBP} for $\bR,\rho_0,\rho_T$, as we will review in Section \ref{Section: static SBP and fg-system}, the solution $\hat\bP$ can be written as $\hat\bP=\frac{\h(T,X_T)}{\h(0,X_0)}\bP$ where $\bP$ is a path measure derived from $\bR$ by only altering the initial endpoint distribution, and $\h$ is defined as in (\ref{eq:integral def of harmonic h by g in Cc})  with some non-negative measurable function $g$ that is entirely determined by $\bR_{0T},\rho_0,\rho_T$.  Thus, formally speaking, we can identify $\hat\bP$ as $\bP^\h$, the $h$-transform of $\bP$ by $\h$. However, such an $\h$ may not be harmonic, which means that the aforementioned $h$-transform theory cannot be applied directly to $\bP^\h$. Instead,  we establish an approximation procedure for $\bP^\h$ (and $\hat\bP$). In \textbf{Theorem \ref{thm:P^hk strong conv to hatP}}, under Assumption \ref{Assumption:Hypo}, we prove that there exist non-negative harmonic functions $\set{h_k: k\geq 1}$ that approximate $\h$ in some proper sense, such that $\bP^{h_k}$, the corresponding $h$-transform of $\bP$ by $h_k$, converges to $\hat\bP$ in the strong topology. Moreover, the construction of such $h_k$'s is carried out through (\ref{eq:integral def of harmonic h by g in Cc})  with $g_k$'s only dependent on $\bR_0,\bR_T,\rho_0,\rho_T$, which makes this approximation procedure universal for jump diffusions (provided that \ref{Assumption:Hypo} is supported) and hence convenient to implement in practice.\\

In Section \ref{Section:smooth-coefficients}, we examine a concrete case where  \ref{Assumption:MartingaleProblem-s,x},  \ref{Assumption:SDEsolution}, and \ref{Assumption:Hypo} are all satisfied. In particular, we adopt the setup from \cite{kunita2019stochastic} to consider the SBP for L\'evy-It\^o SDEs with \textit{smooth coefficients} and \textit{diffeomorphic jumps}. Under this framework, the solution to the SDE not only exists but also admits a transition density with desirable regularity. 
In \textbf{Theorem \ref{THeorem:varphi_hatvarphi solution to Schrodinger system}}, we formulate the \textit{dynamic Schr\"odinger system} associated with the SBP in terms of $(\varphi,\hat\varphi)$, a pair of functions of class $C^{1,\infty}$ on $[0,T]\times\R^n$ that solve the corresponding Kolmogorov backward and forward equations respectively. The pair $(\varphi,\hat\varphi)$ in turn offers further information on the solution $\hat\bP$ to the SBP, as the marginal density of $\hat\bP$ is exactly given by the product $\varphi\hat\varphi$. This result extends the existing literature on the diffusion SBP to the jump-diffusion setting.

At last, we treat the SBP for a common type of jump-diffusion operator, and that is when the operator $L$ has an $\alpha$-stable-like jump component, such as the model studied in \cite{czq2016heatkernel}. Although such an $L$ may not support \ref{Assumption:Hypo}, it is possible to approximate $L$ in some proper sense by jump-diffusion operators that fulfill \ref{Assumption:Hypo} and hence yield ``nice-behaving'' $h$-transforms. It is natural to expect that these $h$-transforms should be ``close'' to the solution $\hat\bP$ to the SBP for $L$. Indeed, we prove in \textbf{Theorem \ref{Thm:approximation of P^ in finite dimensional dist}} that it is possible to construct a sequence of path measures, all of which are $h$-transforms by harmonic functions, that converges to $\hat\bP$ in all finite-dimensional distributions. \\

\noindent \textbf{Generality of the Assumptions:} Let us give a brief discussion of our assumptions. Assumption \ref{Assumption:MartingaleProblem-s,x} is satisfied when the \textit{martingale problem} for $L$ is well posed. The well-posedness of martingale problem for jump-diffusion operators has long been investigated and has generated a rich literature. For example, for general $L$, it has been shown in \cite{Stroock1975,komatsu1973markov} that the martingale problem is well posed if, as functions in $(s,x)$, $\sigma(s,x)$ and $\int_{\R^\ell}|\gamma(s,x,z)|^2\nu(dz)$ are bounded and continuous, and $b(s,x)$ is bounded and measurable. More recently, for $L$ with a stable-like jump component, the martingale problem has also been extensively studied \cite{chen2021periodic,kuhn2020existence,chen2016heat,chen2020heat,chen2016uniqueness,jin2017heat}.

In fact, in the case of diffusions, \ref{Assumption:MartingaleProblem-s,x} also implies \ref{Assumption:SDEsolution} as it is a well-known result that the existence of a martingale problem solution for a diffusion operator is equivalent to that of a weak solution to the corresponding It\^o SDE \cite{stroock1997multidimensional,rogers2000diffusionsVOL2}. However, such an equivalence is not fully established yet for general jump diffusions; some results in this direction are known in the case of time-homogeneous coefficients \cite{lepeltier1976probleme, kurtz2011equivalence}. On the other hand, the SDE \eqref{ReferenceLevyIto} is known to admit a unique \textit{strong} solution under Lipschitz-type conditions on the coefficients (e.g., \cite[Chapter 6.2]{applebaum_2009}), in which case \ref{Assumption:SDEsolution} will be fulfilled.

Assumption \ref{Assumption:Hypo} is known to hold for diffusions under mild conditions on the coefficients, but analogous results are far less abundant in the jump-diffusion case. A typical scenario in which \ref{Assumption:Hypo} holds is when the transition distribution $P_{s,t}(x,\cdot)$ admits a density $p(s,x, t,\cdot)$ that possesses the desired regularity, and thus the regularity of $h$ can be inherited from $p$. Such is the case for the two classes of jump diffusions that we will cover in Section \ref{Section:smooth-coefficients}. As the literature of jump diffusions is fast developing, we expect that \ref{Assumption:Hypo} will be shown to hold in more situations.\\

\noindent \textbf{Novelty of the Work:} Although the SBP for diffusions has been extensively treated and well understood, rigorous studies of the counterpart problem for jump diffusions are still scarce, and a comprehensive mathematical theory for the jump-diffusion SBP is still missing. This work is a step toward filling the gaps. More generally, our work is aligned with the fast-growing body of research that connects/compares diffusions with jump diffusions, and local operators with non-local operators. Below we explain some novelties of this work:

(1) We adopt a general framework that encompasses a wide range of diffusions and jump diffusions. Instead of restricting ourselves to cases with concrete conditions on the coefficients, we build the theory on some basic properties (i.e., \ref{Assumption:MartingaleProblem-s,x} or \ref{Assumption:SDEsolution}, \ref{Assumption:Hypo}) that are known to hold for a large class of diffusions and jump diffusions. This includes models whose coefficients  may not be ``nice-behaving'', e.g., certain operators with degenerate diffusion components or singular drifts. 

(2) In our approach of solving the SBP, we keep separate the ``impact'' from the target endpoint distributions $\rho_0,\rho_T$ and that from the ``nature'' of the operator $L$, and demonstrate that the latter holds a more essential role in configuring the solution $\hat\bP$. It is clear from our results that, for a ``good-natured''  $L$ (i.e., supporting the assumptions), we can always access $\hat\bP$ through $h$-transforms, either as a harmonic-$h$-transform (which recovers the classical picture of the diffusion SBP), or as a strong-convergence limit of harmonic-$h$-transforms (which presents a novel approach in the study of $\hat\bP$). 

(3) We develop methodologies that are robust and suitable for the study of jump-diffusion SBPs when the regularity theory is insufficient. For example, the $h$-transform theory proposed here does not require $h$ to be strictly positive and is naturally adapted to the zero sets of $h$. The approximation procedure for $\hat\bP$ is also given in terms of explicit formulas and tractable constructions, which makes it more accessible for general jump diffusions.
\subsection{Connections between SBP and Other Fields}
Schr\"odinger's original motivation for posing his problem was based in quantum physics. Over the years, as the theory behind Schr\"odinger bridges developed, it has seen applications in various fields. The discovery of the connection between SBP and \textit{Optimal Transport} (OT), as well as the stochastic control formulation of SBP, prompted a new wave of interest. While the field of OT is fundamentally concerned with finding the mapping between distributions $\rho_0$, $\rho_T$ that minimizes a cost function, the SBP inherently provides a way to reconstruct the dynamics of the process as it evolves from $\rho_0$ to $\rho_T$. As for the stochastic control formulation of the SBP, in the setting of diffusion processes, it converges to the Benamou-Brenier (fluid dynamic) formulation of OT as the diffusion coefficient tends to $0$. These rich connections have been extensively studied \cite{dai1991stochastic, mikami2004monge, leonard2013survey, chen2021optimal}.

In the last decade, these topics have attracted intense attention due to their applications in \textit{machine learning} \cite{wang2021deep, de2021diffusion(MachineLearning),liu2022deep,tzen2019theoretical,pavon2021data,shi2024diffusion,vargas2021solving}, where diffusion SBP models are steadily rising in popularity. To cite some examples, diffusion Schr\"odinger bridges have been adopted for generative modeling tasks \cite{de2021diffusion(MachineLearning)}, their theoretical properties in neural networks have been investigated in \cite{tzen2019theoretical}, and their numerical approximations have been developed in \cite{vargas2021solving}. 
While having a vast range of applications, diffusion models also have limitations as they only describe dynamics that are continuous in time. However, in the study of certain phenomena, it is beneficial or even necessary to adopt models that accommodate ``jumps'', for which \textit{L\'evy processes} are natural candidates.  
For example, due to their nature of combining continuous and jump dynamics, L\'evy processes have been widely adopted in applications to finance as more realistic financial models \cite{eberlein2001application, schoutens2003levy}. 
More recently, jump diffusions\footnote{In finance literature, the word \textit{jump diffusion} often refers to the specific case where the jump component is a compound Poisson process.}, as generalizations of L\'evy processes, have also drawn increasing interest from the mathematical finance community as potential models for the prices of assets, options, and other financial derivatives, as well as interest rate models \cite{cont2005finite, kou2007jump,michelbrink2012martingale,bouzianis2021finance}. 
As an extension of both classical diffusions and L\'evy processes, jump diffusions are promising tools in creating richer models in applied scientific fields. Building a comprehensive theory for the jump-diffusion SBP will not only expand the existing literature, but also shed light on potential new areas of applications. 

\subsection{Further Questions} In this paper, we provide a detailed analysis of the SBP for jump diffusions from the operator and the SDE perspectives. In fact, there exists a third angle, the \textit{stochastic control} formulation  \cite{dai1991stochastic}, which is well-established in the diffusion case. In an ongoing work, we are working on the stochastic control approach to the jump-diffusion SBP. 
 
We also intend to revisit the SBP for processes with killing (known as the \textit{unbalanced SBP})\cite{chen2021usbp} and to interpret it as a jump-diffusion SBP, using both the SDE approach and the stochastic control formulation. 
More generally, we believe that the jump-diffusion framework allows one to reinterpret many SBP-type problems for (jump) diffusions which possess an additional complexity such as a regime-switching mechanism. 
 
In addition, since the SBP and the Schr\"odinger system are symmetric with respect to switching the marginals, it is natural to consider the dynamics of the Schr\"odinger bridge under time reversal. A combination of both forward and backward dynamics has already been applied in some studies of diffusion SBP \cite{chen2021optimal}. The theories and the techniques developed in this work have the potential to facilitate a similar study for jump diffusions.

On the application side, it is known that, when the reference process admits a positive and continuous transition density, the SBP solution can be obtained via an efficient algorithm (see Theorem \ref{Theorem:FBJ-Existence} and the discussion below). While this is the case for a large class of diffusions, only specific types of jump diffusions have been proven to possess densities. Therefore, it is worth exploring whether an effective algorithm can be devised to solve SBPs with relaxed density requirements. In a similar spirit, the special case of the unbalanced SBP for diffusions is studied in \cite{chen2021usbp}, but results in more general circumstances are missing.

\section{Static SBP and Diffusion SBP}\label{Section:DiffusionReview}
In this section, we will review some important aspects of the theory of the static SBP and the literature of the diffusion SBP. The former will serve as the ``start point'' of our investigation, and the latter will describe the ``picture'' we aim to reproduce for jump diffusions. 
\subsection{Static SBP and Schr\"odinger System}\label{Section: static SBP and fg-system}
Schr\"odinger's problem (\ref{Definition:DynamicSBP}) is also referred to as the \textit{dynamic} SBP because it involves a minimization over the space of path measures. In many situations, it is easier to solve a \textit{static} version of the problem, and to relate the static solution back to the dynamic formulation. 
\begin{dfn}[Static Schrodinger Bridge Problem]
Given a reference measure $R\in \cP(\R^n\times \R^n)$ and target marginals $\rho_0,\rho_T \in \cP(\R^n)$, determine
\begin{equation}\label{Definition:StaticSBP}
    \hat{\pi}=\arg\min_{\pi \in \cP(\R^n\times \R^n)}\set{\KL{\pi}{R} \text{ such that }  \pi_0=\rho_0,\pi_T=\rho_T }
\end{equation}
where $\pi_0,\pi_T$ are respectively the marginal distributions of $\pi$ in the first and second variable.

We say that the static SBP does not have a solution if the ``\textup{min}'' in \eqref{Definition:StaticSBP} is $\infty$.
\end{dfn}
To relate the static SBP to the dynamic problem (\ref{Definition:DynamicSBP}), let us begin by introducing some notation. For a path measure $\bP\in\cP(\Omega)$ and $0\leq s\leq t\leq T$, define $\bP_{st}$ to be its two-dimensional joint marginal distribution at times $s$ and $t$.
Moreover, if for $x,y\in\R^n$,
\begin{equation*}
    \bP^{xy}:=\bP(\cdot|X_0=x,X_T=y)
\end{equation*}
is the induced \textit{bridge measure} between $x$ and $y$, then we have the decomposition of $\bP$ as
\begin{equation}\label{Equation:Disintegration}
    \bP(\cdot)=\int_{\R^n\times\R^n}\bP^{xy}(\cdot)\bP_{0T}(dxdy).
\end{equation}
For a given choice of $\rho_0,\rho_T$, denote the collection of their couplings by 
\begin{equation*}
    \Pi(\rho_0, \rho_T) := \set{\pi\in\cP(\R^n\times\R^n) : \pi_0=\rho_0, \pi_T=\rho_T}. 
\end{equation*}
A path measure $\bP\in\cP(\Omega)$ is called \textit{admissible} if $\bP_{0T}\in\Pi(\rho_0,\rho_T)$. 
\begin{thm}\cite{follmer1988random}\label{thm:relation static and dynamic SBP}
The Schr\"odinger problems \eqref{Definition:DynamicSBP} and \eqref{Definition:StaticSBP} admit at most one solution $\hat{\bP}$ and $\hat{\pi}$ respectively. Assume that the reference measure $R$ in the static problem \eqref{Definition:StaticSBP} and the reference path measure $\bR$ in the dynamic problem \eqref{Definition:DynamicSBP} are such that $R=\bR_{0T}$. Then, we have the following relations between the solutions: 
    \begin{enumerate}
    \item If $\hat{\pi}$ is the solution to \eqref{Definition:StaticSBP}, then
            \begin{equation}\label{StaticToDynamic}
                \hat{\bP}(\cdot):=\int_{\R^n\times\R^n}\bR^{xy}(\cdot)\hat{\pi}(dxdy)
            \end{equation}
        is the solution to \eqref{Definition:DynamicSBP}, which means that $\hat{\bP}_{0T}=\hat{\pi}$ and $\hat{\bP}$ shares its bridges with $\bR$ in the sense that
            \[
                \hat{\bP}^{xy}=\bR^{xy} \text{ for $\hat{\pi}$-a.e. $(x,y)$}.
            \]
        \item Conversely, if $\hat{\bP}$ is the solution to \eqref{Definition:DynamicSBP}, then $\hat{\pi}:=\hat{\bP}_{0T}$ is the solution to \eqref{Definition:StaticSBP}.
        
    \end{enumerate} 
\end{thm}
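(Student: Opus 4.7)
The plan is to exploit the disintegration identity \eqref{Equation:Disintegration} together with an additivity (chain rule) property of the KL divergence. The key tool is the formula
\begin{equation*}
    \KL{\bP}{\bR} = \KL{\bP_{0T}}{\bR_{0T}} + \int_{\R^n \times \R^n} \KL{\bP^{xy}}{\bR^{xy}}\,\bP_{0T}(dxdy),
\end{equation*}
valid whenever $\bP \ll \bR$. The idea behind this identity is that, since $\Omega$ is Polish under the Skorokhod metric, the bridge kernels $\bR^{xy}$ and $\bP^{xy}$ can be chosen as regular conditional distributions; then $\bP \ll \bR$ implies $\bP_{0T} \ll \bR_{0T}$ and $\bP^{xy} \ll \bR^{xy}$ for $\bP_{0T}$-a.e.\ $(x,y)$, and the Radon--Nikodym derivative $d\bP/d\bR$ factors as a product of a two-point piece and a bridge piece, so taking logarithms and integrating gives the claimed decomposition.

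Given this identity, the rest of the argument is structural. First, I would fix an admissible $\bP \in \cP(\Omega)$ with $\bP_{0T} \in \Pi(\rho_0,\rho_T)$ and observe that since the bridge term on the right-hand side is non-negative and vanishes exactly when $\bP^{xy} = \bR^{xy}$ for $\bP_{0T}$-a.e.\ $(x,y)$, we have the bound $\KL{\bP}{\bR} \geq \KL{\bP_{0T}}{\bR_{0T}}$, with equality precisely for the glued measure obtained by integrating $\bR^{xy}$ against $\bP_{0T}$. Minimizing this lower bound over admissible $\bP$ is exactly the static SBP \eqref{Definition:StaticSBP} with $R = \bR_{0T}$. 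Hence if $\hat\pi$ solves the static problem, the path measure $\hat\bP$ defined by \eqref{StaticToDynamic} is admissible, satisfies $\hat\bP_{0T} = \hat\pi$, and achieves $\KL{\hat\bP}{\bR} = \KL{\hat\pi}{R}$, so no admissible $\bP$ can do better; conversely, if $\hat\bP$ is dynamic-optimal, the same inequality forces $\hat\bP_{0T}$ to be static-optimal, and equality in the bound forces $\hat\bP^{xy} = \bR^{xy}$ for $\hat\bP_{0T}$-a.e.\ $(x,y)$.

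For the uniqueness statements, I would invoke strict convexity of $\mu \mapsto \KL{\mu}{\nu}$ on the set where it is finite: if two minimizers coexisted in either \eqref{Definition:DynamicSBP} or \eqref{Definition:StaticSBP}, their midpoint would yield a strictly smaller KL value, contradicting optimality (and if the infimum is $+\infty$, uniqueness holds vacuously by the convention in the definition of the SBP).

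The main obstacle is genuinely only the first ingredient: a careful justification of the chain-rule decomposition of KL divergence along the disintegration, including the measurability of $(x,y) \mapsto \bR^{xy}$, the absolute continuity $\bP^{xy} \ll \bR^{xy}$ on a full-measure set, and the factorization of the Radon--Nikodym derivative. Once this identity is established, both directions of the correspondence and the uniqueness claim follow by the structural argument above without any further analytic input specific to the jump-diffusion reference measure $\bR$.
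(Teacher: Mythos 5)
Your proposal is correct and follows essentially the same route as the paper: it hinges on the additive (chain-rule) decomposition of the KL divergence along the disintegration into two-point marginal and bridge components, from which both directions of the static--dynamic correspondence and the uniqueness (via strict convexity of relative entropy) drop out structurally. The paper cites this additivity property directly rather than sketching its justification, but the argument is the same.
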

\noindent The idea of the proof is simple but worth repeating. Uniqueness follows from the fact that KL divergence is strongly convex and non-negative. As for the relations between the two solutions, we first note that, for any admissible $\bP$, the additive property of KL divergence \cite[Appendix A]{leonard2013survey} yields 
\begin{equation*}
    \KL{\bP}{\bR}=\KL{\bP_{0T}}{\bR_{0T}}+\int_{\R^n\times\R^n}\KL{\bP^{xy}}{\bR^{xy}}\bP_{0T}(dxdy).
\end{equation*}
Combining with \eqref{Equation:Disintegration}, it is easy to see that, if $\hat\pi$ is a solution to \eqref{Definition:StaticSBP}, then $ \KL{\bP}{\bR}$ achieves its minimum when $\bP_{0T}=\hat{\pi}$ and $\KL{\bP^{xy}}{\bR^{xy}}=0$ for $\bP_{0T}$-a.e. $(x,y)$, the latter of which implies that $\hat{\bP}^{xy}=\bR^{xy}$ for $\bP_{0T}$-a.e. $(x,y)$, and hence we arrive at \eqref{StaticToDynamic}. The second statement follows from a similar argument. \\

The static SBP is often simpler to study. Once a static Schr\"odinger bridge $\hat\pi$ is found, a solution $\hat\bP$ to the dynamic SBP becomes accessible following the theorem above (see \cite{gaussianbridge} for a recent example). The existence and the structure of $\hat\pi$ are thoroughly studied in the SBP literature \cite{fortet1940resolution,jamison1974reciprocal,jamison1975markov,follmer1988random,leonard2013survey,leonard2014some,follmer1997entropy,wakolbinger1990schrodinger}. In the discussion that follows, we give an overview of some theoretical results that will relevant to our study of the jump-diffusion SBP.

To begin, we recall that $\Pi(\rho_0,\rho_T)$ is compact in $\cP(\R^n\times\R^n)$ and that $\KL{\cdot}{\bR_{0T}}$ is lower-bounded and lower semi-continuous. Hence, the static SBP \eqref{Definition:StaticSBP} with $R=\bR_{0T}$ admits a unique solution $\hat\pi$ if and only if there exists some $\pi\in\Pi(\rho_0, \rho_T)$ such that $\KL{\pi}{\bR_{0T}}<\infty$\footnote{We also note that a necessary condition for $\hat\pi$ to exist is $\KL{\rho_0}{\bR_0}<\infty$ and $\KL{\rho_T}{\bR_T}<\infty$.}. This condition can be verified by different means, for example by checking the independent coupling $\pi=\rho_0\otimes\rho_T$ as a test case. Once the existence of $\hat\pi$ is established, the next step concerns the Radon-Nikodym derivative $d\hat\pi/d\bR_{0T}$. It is known that in many situations, this derivative is in fact a product of functions in separate coordinates. 
\begin{thm}\label{Theorem:fg-existence}\cite{leonard2013survey}
Assume that $\bR,\rho_0,\rho_T$ are such that the static SBP \eqref{Definition:StaticSBP} with reference measure $R=\bR_{0T}$ and target marginals $\rho_0,\rho_T$ admits a unique solution $\hat\pi$. If $\bR$ is Markov and satisfies $\bR_{0T}\ll \bR_0\otimes \bR_T$, then there exist non-negative measurable functions $\mathfrak{f}, \mathfrak{g} : \R^n \to \R$ such that\\
1. $\hat\pi$ takes the following form:
\begin{equation}\label{Equation:pihat}
    \hat{\pi} = \mathfrak{f}(X_0)\mathfrak{g}(X_T)\bR_{0T};
\end{equation}\\
2. the unique solution to the dynamic SBP \eqref{Definition:DynamicSBP} with reference path measure $\bR$ and target endpoint distributions $\rho_0,\rho_T$ is given by
\begin{equation}\label{Equation:bP=fgbR}
    \hat{\bP}=\f(X_0)\g(X_T)\bR;
\end{equation}\\
3. the pair $(\mathfrak{f},\mathfrak{g})$ satisfies the \textup{Schr\"odinger system}
\begin{equation}\label{Equation:fgSchrodingerSystem}
    \begin{cases}
    \vspace{0.2cm}\displaystyle \mathfrak{f}(x)\Esub{\bR}{\mathfrak{g}(X_T)|X_0=x} = \frac{d\rho_0}{d\bR_0}(x)& \text{ for $\bR_0$-a.e. $x$},\\
    \displaystyle \mathfrak{g}(y)\Esub{\bR}{\mathfrak{f}(X_0)|X_T=y} = \frac{d\rho_T}{d\bR_T}(y)& \text{ for $\bR_T$-a.e. $y$}.
    \end{cases}
\end{equation}
\end{thm}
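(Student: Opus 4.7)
The plan is to establish (1) first, use it to derive (3) by a direct marginalization argument, and finally obtain (2) as an application of Theorem \ref{thm:relation static and dynamic SBP}. The structural product form in (1) is the heart of the result; once it is in hand, (2) and (3) follow cleanly from previously established material.

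For claim (1), I would derive the product structure of $d\hat{\pi}/d\bR_{0T}$ via a variational/duality argument. The heuristic is a Lagrangian computation: since $\hat\pi$ is the unique minimizer of $\pi\mapsto\KL{\pi}{\bR_{0T}}$ over $\Pi(\rho_0,\rho_T)$, the formal first-order optimality condition for the Lagrangian with multipliers $\alpha(x),\beta(y)$ attached to the two marginal constraints reads
\begin{equation*}
    \log\frac{d\hat{\pi}}{d\bR_{0T}}(x,y) = -1 + \alpha(x) + \beta(y),
\end{equation*}
from which $d\hat\pi/d\bR_{0T}(x,y) = \f(x)\g(y)$ for some non-negative measurable $\f,\g$. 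To make this rigorous, I would invoke Csiszar's $I$-projection theorem, which characterizes minimizers of relative entropy over convex sets defined by marginal constraints as having exactly this multiplicative structure. An alternative, self-contained path would be a direct perturbation argument: for any $\pi\in\Pi(\rho_0,\rho_T)$ with $\KL{\pi}{\bR_{0T}}<\infty$, differentiate $\KL{(1-\epsilon)\hat\pi+\epsilon\pi}{\bR_{0T}}$ at $\epsilon=0^+$, read off the resulting first-order inequality, and use the linearity of the marginal constraints to separate the log-density into a sum of functions of $x$ alone and of $y$ alone; see \cite{leonard2013survey} for the detailed execution.

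For claim (3), assuming (1), the Schr\"odinger system follows by marginalization. Integrating out the second coordinate in \eqref{Equation:pihat},
\begin{equation*}
    \rho_0(dx) = \f(x)\left(\int_{\R^n}\g(y)\,\bR_{0T}(dx,dy)\right).
\end{equation*}
The assumption $\bR_{0T}\ll\bR_0\otimes\bR_T$ guarantees that $\bR_{0T}$ admits a regular disintegration with respect to $\bR_0$, so that taking the Radon-Nikodym derivative of both sides with respect to $\bR_0$ is justified (note that $\rho_0\ll\bR_0$ is a necessary condition for finite KL divergence as pointed out in the footnote preceding the theorem). This yields
\begin{equation*}
    \frac{d\rho_0}{d\bR_0}(x) = \f(x)\Esub{\bR}{\g(X_T)\mid X_0=x}\quad\text{for $\bR_0$-a.e. $x$},
\end{equation*}
and a symmetric argument produces the second equation of \eqref{Equation:fgSchrodingerSystem}.

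For claim (2), Theorem \ref{thm:relation static and dynamic SBP} together with \eqref{Equation:Disintegration} gives
\begin{equation*}
    \hat\bP(\cdot) = \int_{\R^n\times\R^n}\bR^{xy}(\cdot)\,\hat\pi(dxdy) = \int_{\R^n\times\R^n}\f(x)\g(y)\bR^{xy}(\cdot)\,\bR_{0T}(dxdy) = \f(X_0)\g(X_T)\bR,
\end{equation*}
where the last equality uses that the factor $\f(X_0)\g(X_T)$ is $\sigma(X_0,X_T)$-measurable and can therefore be pulled out of the disintegration. The Markov hypothesis on $\bR$ is what guarantees the bridge measures $\bR^{xy}$ are well-defined and satisfy \eqref{Equation:Disintegration}. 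The main obstacle throughout will be making the Lagrangian-style argument for claim (1) fully rigorous: the multipliers $\alpha,\beta$ need not be bounded or even finite $\bR$-a.e., so integrability must be handled carefully, and uniqueness of the pair $(\f,\g)$ holds only up to the scaling $\f\mapsto c\f,\,\g\mapsto \g/c$. However, these technicalities have been worked out in the classical SBP literature and can be imported without modification in the jump-diffusion setting, since the variational derivation is purely measure-theoretic and does not rely on any regularity of the reference process.
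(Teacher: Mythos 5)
The paper does not prove this theorem; it is cited directly from \cite{leonard2013survey}, so there is no in-paper proof to compare against. Your reconstruction follows the standard route found in that survey and is essentially sound, but two points deserve correction. First, your remark that the Markov hypothesis on $\bR$ is ``what guarantees the bridge measures $\bR^{xy}$ are well-defined and satisfy \eqref{Equation:Disintegration}'' is not accurate: regular conditional distributions, and hence the disintegration \eqref{Equation:Disintegration}, exist for any path measure on the Polish space $\Omega$ regardless of Markovianity, and the additive entropy decomposition behind Theorem \ref{thm:relation static and dynamic SBP} also needs no Markov hypothesis. The Markov condition here is carried over from L\'eonard's Theorem 2.12, which in addition proves that $\hat\bP$ is Markov and gives the intermediate-time marginal representation $\hat\bP_t = \varphi_t\hat\varphi_t\bR_t$ --- that is where Markovianity is actually used, and neither statement is part of the present theorem's three conclusions. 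Relatedly, the disintegration $\bR_{0T}(dxdy)=\bR_{0T}(dy\mid x)\bR_0(dx)$ you invoke for claim (3) also exists unconditionally; $\bR_{0T}\ll\bR_0\otimes\bR_T$ is needed for the product-form conclusion in claim (1), not for the existence of the disintegration. Second, the display
\begin{equation*}
    \rho_0(dx) = \f(x)\left(\int_{\R^n}\g(y)\,\bR_{0T}(dx,dy)\right)
\end{equation*}
conflates a joint measure with its conditional; marginalizing \eqref{Equation:pihat} over $y$ actually yields
\begin{equation*}
    \rho_0(dx) = \f(x)\left(\int_{\R^n}\g(y)\,\bR_{0T}(dy\mid x)\right)\bR_0(dx),
\end{equation*}
after which identifying the inner integral with $\Esub{\bR}{\g(X_T)\mid X_0=x}$ and taking the $\bR_0$-Radon--Nikodym derivative (valid since $\rho_0\ll\bR_0$ is forced by finiteness of the entropy) gives the first equation of \eqref{Equation:fgSchrodingerSystem} for $\bR_0$-a.e.\ $x$. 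Apart from these slips, the overall structure --- claim (1) via Csisz\'ar's $I$-projection theorem or the perturbation argument, claim (3) by marginalization, claim (2) via the static-to-dynamic correspondence of Theorem \ref{thm:relation static and dynamic SBP} --- is exactly the argument one finds in the cited reference.
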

\noindent \textbf{Remark.} The condition $\bR_{0T}\ll \bR_0\otimes \bR_T$ is automatically satisfied if $\bR_{0T}$ is absolutely continuous with respect to a $\sigma$-finite product measure on $\R^n\times\R^n$ (e.g. the Lebesgue measure). 
Further, note that if $\cC$ denotes the set of all pairs $(\rho_0,\rho_T)$ for which \eqref{Definition:StaticSBP} admits a solution, then $\cC$ is convex. When  $(\rho_0,\rho_T)\in\cC$ is \textit{internal} in the sense that there exist $(\alpha_0,\alpha_T),(\beta_0,\beta_T)\in \cC$ and $c\in(0,1)$ such that $(\rho_0,\rho_T)=(1-c)(\alpha_0,\alpha_T)+c(\beta_0,\beta_T)$, then the functions $\f,\g$ are positive, which implies that $\hat\bP$ and $\bR$ are equivalent measures. Although we only assume $\f,\g$ are non-negative while conducting this work, if they are positive, then many of the calculations and expressions down the line may be simplified.\\

Throughout this work, we assume that the conclusion of Theorem \ref{Theorem:fg-existence} holds\footnote{We refer readers to \cite{leonard2013survey} for a detailed survey on conditions that support the conclusion of Theorem \ref{Theorem:fg-existence}.}, meaning that $\hat\bP$ exists and takes the representation \eqref{Equation:bP=fgbR}, where $\f,\g$ are non-negative measurable functions solving the system \eqref{Equation:fgSchrodingerSystem}. Readers familiar with the subject will recognize the product-shaped density $\frac{d\hat\bP}{d\bR}=\f(X_0)\g(X_T)$ as a generalized, symmetric \textit{$h$-transform} \cite{doob1984classical}. Indeed, by setting $\h(t,x):=\Esub{\bR}{\g(X_T)|X_t=x}$ for $(t,x)\in[0,T]\times\R^n$ and assuming $\h(0,\cdot)$ is positive, the equation \eqref{Equation:bP=fgbR} becomes
\begin{equation}\label{Equation:bP=h1/h0bR}
\hat\bP=\frac{\h(T,X_T)}{\h(0,X_0)}\bP,\text{ where }\bP:=\frac{d\rho_0}{d\bR_0}(X_0)\bR.
\end{equation}
That is, $\hat\bP$ can be viewed as an $h$-transform of $\bP$. Obviously, $\bP$ is the path measure obtained from $\bR$ by replacing the initial endpoint distribution $\bR_0$ with $\rho_0$.
We will discuss in detail the notion of $h$-transform for jump diffusions in Section \ref{Section:hTransform}, as it is an important tool in the study of SBP. For a comprehensive exposition on $h$-transforms, we refer readers to \cite[Chapter 11]{chung2005markov}.

\subsection{Diffusion SBP and Dynamic Schr\"odinger System}  
Since Schr\"odinger's thought experiment was originally based on the diffusion of hot gaseous particles, it is not surprising that the most extensively studied SBP is the diffusion case, that is, when $\bR$ is the law of an $\R^n$-valued diffusion process $\set{X_t}_{[0,T]}$ governed by the SDE
\begin{equation}\label{ReferenceItoDiffusion}
dX_t=b(t,X_t)dt+\sigma(t,X_t)dB_t; \,\, 0\leq t \leq T,
\end{equation}
where $\set{B_t}_{t\geq 0}$ is a standard $m$-dimensional Brownian motion \cite{jamison1975markov, nagasawa1989transformations,dai1991stochastic,follmer1988random}. In this section, we will review some results on the diffusion SBP that will be extended to the jump-diffusion setting in Section \ref{Section:smooth-coefficients}. In particular, we will adopt the setup from \cite{jamison1975markov}. Throughout this section, we will assume that the endpoint distributions $\bR_0,\bR_T$ admit probability density functions (and hence so do $\rho_0,\rho_T$) and, with slight abuse of notation, we will write these distributions as $\bR_0(x)dx$, etc.; we also assume that $\bR_0$, $b$ and $\sigma$ satisfy proper conditions\footnote{Typically, $\bR_0$ has finite second moment, $b,\sigma$ are bounded, continuous and satisfy a H\"older condition in the spatial variable, and $a:=\sigma\sigma^T$ satisfies a uniform ellipticity condition \cite{menozzi2021density}.} 
such that the SDE \eqref{ReferenceItoDiffusion} has a (weak) solution $\set{X_t}_{[0,T]}$ that is strong Markovian and admits an everywhere positive, continuous transition density $q(s,x,t,y)$ for $0\leq s< t\leq T$ and $x,y\in\R^n$.

Let $(\f,\g)$ be the solution of the Schr\"odinger system \eqref{Equation:fgSchrodingerSystem}. By defining the functions $\hat\varphi_0:=\f\cdot\bR_0$, $\varphi_0:=\rho_0/\hat\varphi_0$, $\varphi_T:=\g$ and $\hat\varphi_T:=\rho_T/\varphi_T$, we obtain an equivalent expression of the system \eqref{Equation:fgSchrodingerSystem} as a pair of integral equations coupled with nonlinear boundary constraints: for $\bR_{0T}$-a.e. $(x,y)\in\R^{2n}$,
\begin{equation}\label{phiphihat}
    \begin{cases}
     \varphi_0(x) = \int_{\R^n} q(0, x, T, y)\varphi_T(y)dy, \\
     \hat{\varphi}_T(y) = \int_{\R^n} q(0, x, T, y)\hat{\varphi}_0(x)dx,\\ 
     \rho_0(x)=\varphi_0(x)\hat{\varphi}_0(x),\\
     \rho_T(y)=\varphi_T(y)\hat{\varphi}_T(y).
\end{cases}
\end{equation}
The development of the existence of a solution to such a system goes back to Fortet \cite{fortet1940resolution}, Beurling \cite{beurling1960automorphism} and Jamison \cite{jamison1974reciprocal,jamison1975markov}, and predates the modern formulation of the SBP.
\begin{thm}[Fortet-Beurling-Jamison existence theorem]\label{Theorem:FBJ-Existence}
    Under the setting above, there exists a unique (up to scaling) 4-tuple of functions $(\varphi_0,\varphi_T,\hat\varphi_0,\hat\varphi_T)$ solving the Schr\"odinger system \eqref{phiphihat}.
\end{thm}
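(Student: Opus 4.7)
My strategy is to deduce the theorem from the already-stated Schr\"odinger-system existence result (Theorem \ref{Theorem:fg-existence}), exploiting the explicit substitution performed just above \eqref{phiphihat} to pass between the pair $(\f,\g)$ and the 4-tuple $(\varphi_0,\varphi_T,\hat\varphi_0,\hat\varphi_T)$. The key observation is that the system \eqref{phiphihat} is a re-parameterization of the Schr\"odinger system \eqref{Equation:fgSchrodingerSystem}, which is itself the natural description of the factorized density $d\hat\pi/d\bR_{0T}$ in the present density-friendly diffusion setting.

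\textbf{Step 1 (equivalence of the two systems).} Given a non-negative measurable pair $(\f,\g)$ solving \eqref{Equation:fgSchrodingerSystem}, define $\hat\varphi_0:=\f\cdot\bR_0$, $\varphi_T:=\g$, $\varphi_0:=\rho_0/\hat\varphi_0$ and $\hat\varphi_T:=\rho_T/\varphi_T$ exactly as in the text preceding \eqref{phiphihat}. Using that $\bR$ has the positive continuous transition density $q$ and that $\bR_0,\bR_T$ have densities, rewrite the conditional expectations $\Esub{\bR}{\g(X_T)|X_0=x}$ and $\Esub{\bR}{\f(X_0)|X_T=y}$ as integrals against $q$; substituting back turns \eqref{Equation:fgSchrodingerSystem} into \eqref{phiphihat}, and the reverse substitution turns any solution of \eqref{phiphihat} back into one of \eqref{Equation:fgSchrodingerSystem}.

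\textbf{Step 2 (existence).} I verify the three hypotheses of Theorem \ref{Theorem:fg-existence}. The reference $\bR$ is Markov by the SDE representation \eqref{ReferenceItoDiffusion}, and the positivity/continuity of $q$ together with $\bR_0,\bR_T$ having densities forces $\bR_{0T}\ll \bR_0\otimes\bR_T$. It remains to check that the static SBP with $R=\bR_{0T}$ and targets $\rho_0,\rho_T$ admits a (unique) solution, for which it suffices to exhibit one coupling of finite KL divergence. The independent coupling $\pi_{\mathrm{ind}}:=\rho_0\otimes\rho_T$ has density $\rho_0(x)\rho_T(y)/\bigl(\bR_0(x)q(0,x,T,y)\bigr)$ with respect to $\bR_{0T}$, and under the regularity assumptions placed on $\bR_0,b,\sigma,\rho_0,\rho_T$ (finite-entropy densities plus suitable integrability of $\log q$ against $\rho_0\otimes\rho_T$) one obtains $\KL{\pi_{\mathrm{ind}}}{\bR_{0T}}<\infty$. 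Lower semi-continuity and strong convexity of the KL divergence on the compact convex set $\Pi(\rho_0,\rho_T)$ then yield the unique $\hat\pi$, and Theorem \ref{Theorem:fg-existence} produces $(\f,\g)$. Translating via Step 1 gives the desired 4-tuple.

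\textbf{Step 3 (uniqueness) and main obstacle.} Because $\hat\pi$ is unique and $d\hat\pi/d\bR_{0T}(x,y)=\f(x)\g(y)$, the product $\f(x)\g(y)$ is pinned down $\bR_{0T}$-a.e.; the positivity of $q$ on all of $\R^n\times\R^n$ then forces any competing factorization $\f'(x)\g'(y)$ to satisfy $\f'=c\f$ and $\g'=\g/c$ for some constant $c>0$, which corresponds exactly to the scaling $(\hat\varphi_0,\varphi_0,\varphi_T,\hat\varphi_T)\mapsto(c\hat\varphi_0,\varphi_0/c,\varphi_T/c,c\hat\varphi_T)$ in the 4-tuple picture. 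The delicate part of the argument is Step 2: verifying finiteness of $\KL{\pi_{\mathrm{ind}}}{\bR_{0T}}$ requires quantitative control of $\log q(0,x,T,y)$, which in turn relies on Gaussian-type heat-kernel bounds guaranteed by the standing assumptions on $b,\sigma$. The historically original approach of Fortet, Beurling and Jamison circumvents this by constructing the 4-tuple directly as a fixed point of the natural Schr\"odinger iteration on the Hilbert projective cone of positive functions, where positivity and continuity of $q$ produce a contraction; this is an alternative route but is analytically heavier than the convex-optimization argument outlined above.
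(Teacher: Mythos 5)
The paper does not prove Theorem~\ref{Theorem:FBJ-Existence}: it is stated as a citation from Fortet, Beurling, and Jamison, and the text immediately following the theorem describes the \emph{original} route as Fortet's iterative fixed-point scheme on the four equations in \eqref{phiphihat} (which is what later becomes the Sinkhorn/IPF algorithm the paper mentions). Your proposal instead deduces the theorem from the modern entropy-minimization result quoted as Theorem~\ref{Theorem:fg-existence}, by translating the pair $(\f,\g)$ into the 4-tuple via the change of variables displayed just above \eqref{phiphihat}. These are genuinely distinct routes: the classical one is constructive and analytic (contraction on a projective cone of positive functions, which also yields the algorithm), while yours is variational (uniqueness of the KL minimizer over a compact convex feasible set plus the product-form of the density). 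In the paper's density-friendly setting the two end up proving the same statement, and your translation in Step~1 and your uniqueness argument in Step~3 are in order once one notes that $\f>0$ holds $\rho_0$-a.e.\ and $\g>0$ holds $\rho_T$-a.e.\ (from the first equation of \eqref{Equation:fgSchrodingerSystem} and $\frac{d\rho_0}{d\bR_0}>0$ $\rho_0$-a.e.), so that the quotients $\rho_0/\hat\varphi_0$ and $\rho_T/\varphi_T$ are meaningful and the factorization pins $(\f,\g)$ down up to a reciprocal scalar.

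The only point I would press you on is Step~2. Theorem~\ref{Theorem:fg-existence} takes the existence of a static solution as a hypothesis, so the entire weight of ``existence'' rests on verifying $\KL{\pi_{\mathrm{ind}}}{\bR_{0T}}<\infty$, and you rightly flag that this is not a formal consequence of ``$q$ positive and continuous plus marginals with densities.'' It genuinely requires extra quantitative input: a lower heat-kernel bound of Aronson type, say $q(0,x,T,y)\ge c\exp(-C|x-y|^2)$, together with finite second moments and finite differential entropy of $\rho_0,\rho_T$. The paper hides this behind its ``proper conditions'' footnote, so your silence is no worse than the source's, but a self-contained write-up should state it. Without some such hypothesis the claimed existence can fail, and this is precisely the place where the classical fixed-point argument of Fortet and Beurling has to do work as well.
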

Although later contributions and proofs are analytic in nature, Fortet's approach was to develop an iterative scheme based on the four equations in (\ref{phiphihat}). His algorithm in \cite{fortet1940resolution}, as well as some modern revisits of the algorithm \cite{essid2019traversing, chen2016entropic}, is based on a fixed-point method and belongs to what is now called \textit{Iterative Proportional Fitting} or \textit{Sinkhorn-type} \cite{sinkhorn1967concerning} algorithms (depending on the setting). This algorithm provides an efficient way of computing the Schr\"odinger system solution in applications and it has been widely used in optimal transport in recent years. In fact, the entropy-regularized optimal transport problem, which is equivalent to the static SBP \eqref{Definition:StaticSBP} with a specific choice of reference measure $R$, can be solved via this algorithm in a computationally efficient way \cite{cuturi2013sinkhorn}.\\

We now switch to a ``dynamic'' view. Given the solution $(\varphi_0,\varphi_T,\hat\varphi_0,\hat\varphi_T)$ to the Schr\"odinger system \eqref{phiphihat}, we observe that if $\varphi,\hat\varphi$ are two functions on $[0,T]\times\R^n $ defined as
\begin{equation*}
    \begin{cases}
     \varphi(t,x) := \int_{\R^n} q(t, x, T, y)\varphi_T(y)dy, \\
     \hat{\varphi}(t,x) := \int_{\R^n} q(0, y, t, x)\hat{\varphi}_0(y)dy,
\end{cases}\quad\text{for}(t,x)\in[0,T]\times\R^n,
\end{equation*}
then $\varphi,\hat\varphi$ satisfy the boundary constraints
\[
\varphi(0,\cdot)=\varphi_0, \;  \hat\varphi(0,\cdot)=\hat\varphi_0, \; 
\varphi(T,\cdot)=\varphi_T, \; \hat\varphi(T,\cdot)=\hat\varphi_T.
\]
In other words, we can rewrite the solution to \eqref{phiphihat} as $(\varphi(0,\cdot),\hat\varphi(0,\cdot),\varphi(T,\cdot),\hat\varphi(T,\cdot))$.
If, in addition, $\varphi,\hat\varphi$ are functions of class $C^{1,2}$, then by invoking the Kolmogorov backward and forward equations for the SDE \eqref{ReferenceItoDiffusion}, we obtain from \eqref{phiphihat} the \textit{dynamic Schr\"odinger system}
\begin{equation}\label{Equation:ItoSchrodingerSystem}
    \begin{dcases}
    \frac{\partial \varphi}{\partial t}(t,x)+(b\cdot\nabla \varphi)(t,x) +\frac{1}{2} \sum_{i,j}a_{ij}\frac{\partial^2\varphi}{\partial x_i \partial x_j}\,(t,x)=0\text{ for }(t,x)\in(0,T)\times\R^n,\\
    \frac{\partial \hat{\varphi}}{\partial t}(t,x)+ \nabla\cdot(b\hat\varphi)(t,x) -\frac{1}{2} \sum_{i,j}\frac{\partial^2(a_{ij}\hat{\varphi})}{\partial x_i \partial x_j}(t,x)=0\text{ for }(t,x)\in(0,T)\times\R^n,\\
    \rho_0(x)=\varphi(0,x)\hat{\varphi}(0,x)\text{ for }x\in\R^n,\\
    \rho_T(x)=\varphi(T,x)\hat{\varphi}(T,x)\text{ for }x\in\R^n.
    \end{dcases}
\end{equation} 
Recall from \eqref{Equation:bP=fgbR} that the (dynamic) Schr\"odinger bridge is given by $\hat\bP=\f(X_0)\g(X_T)\bR$, which can be reparametrized in terms of $\varphi,\hat\varphi$ as 
\[
\hat\bP=\frac{\varphi(T,X_T)}{\varphi(0,X_0)}\frac{\rho_0(X_0)}{\bR_0(X_0)}\bR.
\]
As previously noted, this expression corresponds to an $h$-transform (by the harmonic function $\varphi$). We summarize in the theorem below the main results on $\hat\bP$ in the current setting of diffusion SBPs, part of which were first presented by Jamison in \cite{jamison1975markov}.
\begin{thm}\label{thm:DiffusionSBP} Under the setting above, there exists a unique 
(up to scaling) pair of non-negative functions $(\varphi,\hat\varphi)$ on $[0,T]\times\R^n$ such that $(\varphi(0,\cdot),\hat\varphi(0,\cdot),\varphi(T,\cdot),\hat\varphi(T,\cdot))$ solves the Schr\"odinger system \eqref{phiphihat}, and the marginal distribution of $\hat\bP$ at $t\in[0,T]$ admits a probability density function given by
\begin{equation*}
    \hat\bP_t(x)=\varphi(t,x)\hat\varphi(t,x)\text{ for every }x\in\R^n.
\end{equation*}
If further $\varphi,\hat\varphi$ are of class $C^{1,2}$, then they satisfy the dynamic Schr\"odinger system \eqref{Equation:ItoSchrodingerSystem}, and the canonical process under $\hat{\bP}$ solves the modified SDE
\begin{equation*}
dX_t=\pran{b+\sigma\sigma^T\nabla\log \varphi}(t,X_t)dt+\sigma(t,X_t)dB_t;\, t\in[0,T]. 
\end{equation*}  
\end{thm}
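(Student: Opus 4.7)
The plan is to build $(\varphi,\hat\varphi)$ from the static 4-tuple supplied by Theorem \ref{Theorem:FBJ-Existence}, derive the marginal density of $\hat\bP$ through the product representation $\hat\bP=\f(X_0)\g(X_T)\bR$ from Theorem \ref{Theorem:fg-existence} combined with the Markov property of $\bR$, and finally upgrade to the PDE system and the modified SDE via the $h$-transform picture in \eqref{Equation:bP=h1/h0bR}.

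For the first stage, let $(\varphi_0,\varphi_T,\hat\varphi_0,\hat\varphi_T)$ be the unique non-negative solution to \eqref{phiphihat} and define $\varphi(t,x):=\int_{\R^n}q(t,x,T,y)\varphi_T(y)dy$ and $\hat\varphi(t,x):=\int_{\R^n}q(0,y,t,x)\hat\varphi_0(y)dy$. The boundary values $\varphi(0,\cdot)=\varphi_0$ and $\hat\varphi(T,\cdot)=\hat\varphi_T$ are exactly the first two equations of \eqref{phiphihat}, while $\varphi(T,\cdot)=\varphi_T$ and $\hat\varphi(0,\cdot)=\hat\varphi_0$ are direct from the integral definitions. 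Uniqueness of $(\varphi,\hat\varphi)$ follows because any other pair with the stated endpoint property would furnish another solution to \eqref{phiphihat}, contradicting Theorem \ref{Theorem:FBJ-Existence}.

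For the marginal density, by the Markov property of $\bR$, the joint density of $(X_0,X_t,X_T)$ under $\bR$ is $\bR_0(x_0)q(0,x_0,t,x)q(t,x,T,y)$; plugging this into $\hat\bP=\f(X_0)\g(X_T)\bR$ and integrating out $x_0,y$ yields, for any Borel $A\subseteq\R^n$,
\[ \hat\bP_t(A)=\int_A\Bigl(\int_{\R^n}\f(x_0)\bR_0(x_0)q(0,x_0,t,x)dx_0\Bigr)\Bigl(\int_{\R^n}\g(y)q(t,x,T,y)dy\Bigr)dx, \]
so, using $\hat\varphi_0=\f\cdot\bR_0$ and $\varphi_T=\g$, we read off $\hat\bP_t(x)=\varphi(t,x)\hat\varphi(t,x)$. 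Under the additional $C^{1,2}$ hypothesis, $\varphi(t,x)=\Esub{\bR}{\varphi_T(X_T)\mid X_t=x}$ satisfies the Kolmogorov backward equation $(\partial_t+L)\varphi=0$, while $\hat\varphi$, being the forward propagation of $\hat\varphi_0$ through $q$, satisfies the adjoint Fokker-Planck equation; together with the products $\rho_0=\varphi(0,\cdot)\hat\varphi(0,\cdot)$ and $\rho_T=\varphi(T,\cdot)\hat\varphi(T,\cdot)$ from \eqref{phiphihat} these constitute the dynamic Schr\"odinger system \eqref{Equation:ItoSchrodingerSystem}.

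For the modified SDE, set $\bP:=\tfrac{d\rho_0}{d\bR_0}(X_0)\bR$ so that \eqref{Equation:bP=h1/h0bR} gives $\hat\bP=\tfrac{\varphi(T,X_T)}{\varphi(0,X_0)}\bP$, identifying $\hat\bP$ as the $h$-transform of $\bP$ by the harmonic function $\varphi$. It\^o's formula applied to $\log\varphi(t,X_t)$, combined with $(\partial_t+L)\varphi=0$ to cancel the finite-variation part, reveals that $M_t:=\varphi(t,X_t)/\varphi(0,X_0)$ is the exponential local martingale with integrand $\sigma^T\nabla\log\varphi$. A Girsanov change of measure with density $M_T$ then yields that $\tilde B_t:=B_t-\int_0^t(\sigma^T\nabla\log\varphi)(s,X_s)ds$ is an $m$-dimensional Brownian motion under $\hat\bP$, and substituting $dB_t=d\tilde B_t+\sigma^T\nabla\log\varphi\,dt$ into \eqref{ReferenceItoDiffusion} gives the claimed drift shift. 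The main obstacle is verifying that $M_t$ is a genuine $\bP$-martingale rather than merely a local one; the mean-value property of $\varphi(\cdot,X_\cdot)$ under $\bR$, which ultimately rests on the positivity and continuity of $q$, together with a standard localization and uniform-integrability argument, closes this gap. A secondary concern is differentiation under the integral sign in the Kolmogorov equations, which is handled by the $C^{1,2}$ assumption and dominated convergence based on the continuity of $q$.
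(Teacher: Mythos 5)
The paper states this theorem as a summary of known results from the diffusion-SBP literature (citing Jamison~\cite{jamison1975markov} and others), so there is no internal proof to compare against; it serves as the template that Sections~\ref{Section:hTransform}--\ref{Section:smooth-coefficients} generalize to jump diffusions. Your blind proof is correct and is, in spirit, the specialization to $\gamma\equiv 0$ of the machinery the paper later builds: the marginal-density computation mirrors the derivation leading to \eqref{Equation:SBPdensity-fg}, and the Girsanov/$h$-transform step is the diffusion case of Theorem~\ref{GirsanovHTransform} and Corollary~\ref{Theorem:SDEunderPh}.

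Two small clean-ups would tighten your final paragraph. First, the mean-value property of $\varphi$ does not rest on the positivity or continuity of $q$; it is an immediate consequence of the Chapman--Kolmogorov equation applied to $\varphi(t,x)=\int q(t,x,T,y)\varphi_T(y)\,dy$. What positivity of $q$ actually buys you is that $\varphi>0$ everywhere, so that $\log\varphi$ is well defined and the Girsanov exponent $\sigma^T\nabla\log\varphi$ makes sense on all of $[0,T]\times\R^n$ (this sidesteps the whole $\cA_h$, $\tau^s_n$ apparatus the paper must carry in the general non-negative case). Second, the upgrade from local martingale to true martingale needs no ``standard localization and uniform-integrability argument'': $M_t=\varphi(t,X_t)/\varphi(0,X_0)$ is a non-negative local martingale, hence a supermartingale, and the mean-value property gives $\Esub{\bP}{M_t}=1$ for every $t$, so $M$ has constant expectation and is therefore a genuine martingale --- this is exactly the mechanism of Lemma~\ref{lem_martingale1_h(T)/h(0)}. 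With those two adjustments the argument is airtight.
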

In Section \ref{Section:smooth-coefficients}, we extend these results to jump-diffusion SBPs. The condition of having a positive \textit{and} continuous transition density in Theorem \ref{Theorem:FBJ-Existence} and Theorem \ref{thm:DiffusionSBP} is satisfied by a large class of diffusions. However, results of this type are not readily available for general jump diffusions. In this work we will resort to Theorem \ref{Theorem:fg-existence} as our starting point instead.
\section{$h$-Transform of Jump Diffusions}\label{Section:hTransform}
As our goal is to study the SBP for jump diffusions, we will now return to the setup in the introduction. From this point on we will assume that $\Omega$ is the Skorokhod space $D([0,T];\R^n)$, each sample path $\omega\in\Omega$ is a c\`adl\`ag function from $[0,T]$ to $\R^n$, and the corresponding Borel $\sigma$-algebra $\cB_{\Omega}$ is generated by the time projections $\set{X_t}_{[0,T]}$. We take $\set{\cF_t}_{[0,T]}$ to be the natural filtration of $\set{X_t}_{[0,T]}$ (or its usual augmentation when necessary). 
To understand the dynamics of the jump-diffusion Schr\"odinger bridge, one must investigate properties of the $h$-transform in the jump-diffusion setting. 

We will first review preliminary notions on $h$-transforms, and then study the $h$-transform of a jump diffusion  through (1) the generator of the jump diffusion and (2) the SDE corresponding to the jump diffusion. Following each approach, we develop a theory of $h$-transforms that extends the existing literature from the diffusion case ($\gamma = 0$) to the jump-diffusion setting. The ``jumping behavior'' of our model allows us to study a broader class of dynamics,  but it also introduces an additional layer of complexity to the problem. In the generator approach, the jump component turns a local operator into a non-local one\footnote{Sometimes, the jump-diffusion operator considered in this work is also referred to as a local-nonlocal mixed-type operator in the literature.}, and in the SDE approach, the jump terms appear as new stochastic integrals. In the following subsections, we will devise proper techniques to tackle these changes. 
\subsection{$h$-Transform: Preliminaries}
The $h$-transform was first introduced by Doob in his work on potential theory \cite{doob1957conditional,doob1984classical}. A common situation in which the $h$-transform arises is when a Markov process is conditioned to exit a ball $B$ at a given point $x_0\in \partial B$. In its most general sense, the $h$-transform is a combination of conditioning and killing, but we will not consider the latter. Let $L$ be the jump-diffusion operator given by \eqref{ReferenceGenerator} with coefficients satisfying the Assumptions \ref{Assumptions B}. 
Throughout this subsection we will assume that \ref{Assumption:MartingaleProblem-s,x} holds, and we denote by $\bP$ the solution to the martingale problem for $(L,\rho_0)$. The coordinate process $\set{X_t}_{[0,T]}$ under $\bP$ is a strong Markov process and for any $0 \leq s \leq t \leq T$, $x\in\R^n$, we denote by $P_{s,t}(x,\cdot)$ its transition distribution.
Throughout this subsection, we also assume that $h:[0,T]\times\R^n\to[0,\infty)$ is a non-negative and measurable function, and set 
\begin{equation}\label{definition_A_h}
\cA_h:=\set{(t,x)\in [0,T]\times\R^n:h(t,x)>0}.
\end{equation}
We further assume that
\begin{equation}\label{definition_A0r0}
\text{if }A_0:=\set{x\in \R^n:h(0,x)>0}\text{, then }r_0:=\bP(\set{X_0\in A_0})>0.
\end{equation}
\begin{dfn}\label{def:meanvalueproperty}
We say that $h$ satisfies the \textit{mean-value property} if for all $0 \leq s \leq t \leq T$ and $x\in\R^n$,
        \begin{equation}\label{Equation:MeanValueProperty}
            h(s,x)=\int_{\R^n} h(t,y)P_{s,t}(x,dy).
        \end{equation} 
\end{dfn}
\begin{lem}\label{lem_martingale1_h(T)/h(0)}
    Suppose that $h$ satisfies the mean-value property \eqref{Equation:MeanValueProperty}. Then 
    \[
    \set{\one_{\set{X_0\in A_0}}\frac{h(t,X_t)}{h(0,X_0)} : 0 \leq t \leq T}
    \]
    is an $\set{\cF_t}$-martingale under $\bP$.
\end{lem}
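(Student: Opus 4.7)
The plan is to verify the three defining properties of a martingale (adaptedness, integrability, and the conditional expectation identity) for the process
\[
M_t := \one_{\set{X_0 \in A_0}} \frac{h(t,X_t)}{h(0,X_0)}, \quad 0 \leq t \leq T,
\]
with the convention that the ratio is set to $0$ on the complement of $\set{X_0 \in A_0}$ (on which $h(0,X_0) = 0$). Adaptedness is immediate: $X_0$ is $\cF_0$-measurable and $X_t$ is $\cF_t$-measurable, and $h$ is a measurable function by assumption, so $M_t \in \cF_t$.

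For integrability, I would condition on $X_0$ and exploit the mean-value property. Using that $\one_{\set{X_0 \in A_0}}/h(0,X_0)$ is $\cF_0$-measurable and that $\bP_0$-a.s. on $\set{X_0 \in A_0}$ we have $h(0,X_0) > 0$,
\begin{align*}
\Esub{\bP}{M_t}
&= \int_{A_0} \frac{1}{h(0,x)}\,\Esub{\bP}{h(t,X_t)\,\big|\,X_0=x}\,\bP_0(dx)\\
&= \int_{A_0} \frac{1}{h(0,x)} \int_{\R^n} h(t,y)\,P_{0,t}(x,dy)\,\bP_0(dx)
= \int_{A_0} \frac{h(0,x)}{h(0,x)}\,\bP_0(dx) = r_0 < \infty,
\end{align*}
where the third equality is exactly \eqref{Equation:MeanValueProperty}. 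Hence $M_t \in L^1(\bP)$ for every $t \in [0,T]$.

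For the martingale identity, fix $0 \leq s \leq t \leq T$. Since $\set{X_0 \in A_0} \in \cF_0 \subseteq \cF_s$ and $h(0,X_0)$ is $\cF_s$-measurable, these factors pull out of the conditional expectation. Combining this with the Markov property of $\set{X_t}_{[0,T]}$ under $\bP$ (guaranteed by Assumption \ref{Assumption:MartingaleProblem-s,x}) and then applying \eqref{Equation:MeanValueProperty} to the inner integral,
\begin{align*}
\Esub{\bP}{M_t \,\big|\, \cF_s}
&= \one_{\set{X_0 \in A_0}}\frac{1}{h(0,X_0)}\,\Esub{\bP}{h(t,X_t)\,\big|\,\cF_s}\\
&= \one_{\set{X_0 \in A_0}}\frac{1}{h(0,X_0)}\int_{\R^n} h(t,y)\,P_{s,t}(X_s,dy)
= \one_{\set{X_0 \in A_0}}\frac{h(s,X_s)}{h(0,X_0)} = M_s.
\end{align*}

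There is no real obstacle here: the only point requiring care is that $h(0,X_0)$ may vanish, which is dealt with automatically by the indicator $\one_{\set{X_0 \in A_0}}$ and the assumption $r_0 > 0$ from \eqref{definition_A0r0} (ensuring the process is not identically zero). The proof is essentially a bookkeeping exercise in which the mean-value property plays the role that harmonicity plays in the classical diffusion $h$-transform, and the Markov property does the rest.
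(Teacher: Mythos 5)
Your proof is correct and takes essentially the same route as the paper: integrability follows by conditioning on $X_0$ and invoking the mean-value property, and the martingale identity follows by pulling out the $\cF_s$-measurable factor and applying the Markov property together with \eqref{Equation:MeanValueProperty}. The only cosmetic difference is that the paper establishes $\Esub{\bP}{M_T \mid \cF_t} = M_t$ and lets the tower property supply the general identity, whereas you verify $\Esub{\bP}{M_t \mid \cF_s} = M_s$ directly for all $s \le t$.
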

\begin{proof}
First note that the mean-value property \eqref{Equation:MeanValueProperty} guarantees that, for every  $t\in[0,T]$, $\one_{\set{X_0\in A_0}}\frac{h(t,X_t)}{h(0,X_0)}$ is integrable under $\bP$, because 
\begin{equation*}
    \bE_{\bP}\brac{\one_{\set{X_0\in A_0}}\frac{h(t,X_t)}{h(0,X_0)}}=\int_{A_0}\frac{\int_{\R^n}h(t,y)P_{0,t}(x,dy)}{h(0,x)}\rho_0(dx)=\rho_0(A_0)=r_0. 
\end{equation*}
Similarly, for every $t\in[0,T]$ and arbitrary $A\in\cF_t$, 
\begin{align*}
    \bE_{\bP}\brac{\one_{\set{X_0\in A_0}}\frac{h(T,X_T)}{h(0,X_0)}\one_A}&=\bE_{\bP}\brac{\one_{\set{X_0\in A_0}}\frac{\bE_{\bP^{t,X_t}}\brac{h(T,X_T)}}{h(0,X_0)}\one_A} \\
        &=\bE_{\bP}\brac{\one_{\set{X_0\in A_0}}\frac{h(t,X_t)}{h(0,X_0)}\one_A} \text{ by \eqref{Equation:MeanValueProperty}}. 
\end{align*}
Thus, $\displaystyle \bE_{\bP}\brac{\one_{\set{X_0\in A_0}}\frac{h(T,X_T)}{h(0,X_0)}\bigg|\cF_t}=\one_{\set{X_0\in  A_0}}\frac{h(t,X_t)}{h(0,X_0)}$, and the claim follows.
\end{proof}
\begin{dfn} 
As a result of Lemma \ref{lem_martingale1_h(T)/h(0)},  
\begin{equation}\label{Definition:h-transform,general case with r0}
    \bP^h:=\frac{\one_{\set{X_0\in A_0}}}{r_0}\frac{h(T,X_T)}{h(0,X_0)}\bP
\end{equation} 
defines a probability measure on $\Omega$, and $\bP^h$ is called the \textup{$h$-transform} of $\bP$. 
\end{dfn}
Let us now examine the transition distribution of $\set{X_t}_{[0,T]}$ under the path measure $\bP^h$.
\begin{lem}\label{lem_TranDistHtransform}
    Suppose that $h$ satisfies the mean-value property \eqref{Equation:MeanValueProperty} and $\bP^h$ is as in \eqref{Definition:h-transform,general case with r0}. Then, under $\bP^h$, the process $\set{X_t}_{[0,T]}$ admits the transition distribution
    \begin{equation}
        P^h_{s,t}(x,dy)=
        \begin{cases}\label{hTransformPh}
            \displaystyle \frac{h(t,y)}{h(s,x)}P_{s,t}(x,dy) & \text{ if }(s,x)\in \cA_h, \\
            0 & \text{ otherwise}.
        \end{cases}
    \end{equation}
\end{lem}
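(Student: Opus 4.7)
The plan is to identify $P^h_{s,t}(x,\cdot)$ by verifying, for every Borel set $B\subseteq\R^n$ and every $\cF_s$-measurable bounded test random variable $\one_A$, the identity
\[
\bE_{\bP^h}\!\brac{\one_A\one_{\{X_t\in B\}}}=\bE_{\bP^h}\!\brac{\one_A\,P^h_{s,t}(X_s,B)},
\]
which, combined with the strong Markovianity inherited from $\bP$, forces $P^h_{s,t}(X_s,\cdot)$ to be a version of the conditional distribution of $X_t$ given $\cF_s$ under $\bP^h$.

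Before computing, I would dispose of the degenerate case by showing that the set $\{(s,X_s)\notin \cA_h\}$ is $\bP^h$-null, so the second branch of \eqref{hTransformPh} occurs with probability zero. Indeed, by Lemma \ref{lem_martingale1_h(T)/h(0)} together with the tower property,
\[
\bP^h\!\pran{h(s,X_s)=0}=\frac{1}{r_0}\,\bE_{\bP}\!\brac{\one_{\{X_0\in A_0\}}\,\frac{h(s,X_s)}{h(0,X_0)}\one_{\{h(s,X_s)=0\}}}=0.
\]
Moreover, the mean-value property \eqref{Equation:MeanValueProperty} and non-negativity of $h$ imply that on $\{h(s,X_s)=0\}$ we have $\int_{\R^n}h(t,y)P_{s,t}(X_s,dy)=0$, so $h(t,y)=0$ for $P_{s,t}(X_s,\cdot)$-a.e. $y$. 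With this, I can safely use the convention $0/0=0$ in the expression for $P^h_{s,t}$.

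Next, for arbitrary $A\in\cF_s$, I would unwind the change of measure step by step. Applying the definition \eqref{Definition:h-transform,general case with r0}, then using the martingale property at time $t$ (Lemma \ref{lem_martingale1_h(T)/h(0)}) to replace $h(T,X_T)$ inside the expectation, and finally the Markov property of $\bP$ to evaluate the conditional expectation at time $s$, I obtain
\begin{align*}
\bE_{\bP^h}\!\brac{\one_A\one_{\{X_t\in B\}}}
&=\frac{1}{r_0}\,\bE_{\bP}\!\brac{\one_{\{X_0\in A_0\}}\,\frac{h(T,X_T)}{h(0,X_0)}\,\one_A\one_{\{X_t\in B\}}}\\
&=\frac{1}{r_0}\,\bE_{\bP}\!\brac{\one_{\{X_0\in A_0\}}\,\frac{h(t,X_t)}{h(0,X_0)}\,\one_A\one_{\{X_t\in B\}}}\\
&=\frac{1}{r_0}\,\bE_{\bP}\!\brac{\one_{\{X_0\in A_0\}}\,\frac{\one_A}{h(0,X_0)}\int_B h(t,y)\,P_{s,t}(X_s,dy)}.
\end{align*}
On the event $(s,X_s)\in\cA_h$, the inner integral equals $h(s,X_s)\cdot P^h_{s,t}(X_s,B)$ with $P^h_{s,t}$ as in \eqref{hTransformPh}; on its complement both sides vanish by the remark above. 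Therefore the last display equals
\[
\frac{1}{r_0}\,\bE_{\bP}\!\brac{\one_{\{X_0\in A_0\}}\,\frac{h(s,X_s)}{h(0,X_0)}\,\one_A\,P^h_{s,t}(X_s,B)}=\bE_{\bP^h}\!\brac{\one_A\,P^h_{s,t}(X_s,B)},
\]
where in the last step I apply Lemma \ref{lem_martingale1_h(T)/h(0)} in the reverse direction, using that $\one_A P^h_{s,t}(X_s,B)$ is $\cF_s$-measurable.

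The main subtlety I anticipate is the careful bookkeeping on the $\bP^h$-null set $\{h(s,X_s)=0\}$: the formula \eqref{hTransformPh} is formally undefined there, and one must use the mean-value property to argue that both sides of the key identity simultaneously vanish on this set, which is what legitimizes writing $P^h_{s,t}$ as a genuine Markov kernel. The rest is a bookkeeping exercise of alternating between $\bP$ and $\bP^h$ via Lemma \ref{lem_martingale1_h(T)/h(0)} and invoking the Markov property of $\bP$ once.
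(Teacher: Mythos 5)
Your argument is correct. The key maneuvers — unwinding $\bP^h$ to $\bP$, using Lemma~\ref{lem_martingale1_h(T)/h(0)} to replace $h(T,X_T)$ by $h(t,X_t)$ and later to reinsert $h(T,X_T)$, invoking the Markov property of $\bP$, and observing via the mean-value property that $\int_B h(t,y)P_{s,t}(X_s,dy)$ vanishes on $\{h(s,X_s)=0\}$ — are all sound, and together they establish $\bP^h(X_t\in B\mid\cF_s)=P^h_{s,t}(X_s,B)$, which is exactly what the lemma claims. The paper, however, argues differently: instead of verifying the one-step conditional distribution, it computes an arbitrary finite-dimensional distribution $\bP^h(X_{t_0}\in B_0,\dots,X_{t_N}\in B_N)$ as an iterated integral against the kernels $P_{t_i,t_{i+1}}$, inserts the telescoping ratios $h(t_{i+1},x_{i+1})/h(t_i,x_i)$, and justifies (in step $(\dagger)$) restricting each $B_i$ to the set where $h(t_i,\cdot)>0$ by showing the remaining contributions vanish — an argument that plays the role your $0/0=0$ remark plays. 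Your route is the cleaner one: by treating the transition in one step and applying the martingale lemma twice (forward from $T$ to $t$, then backward from $s$ to $T$), you avoid the explicit $N$-fold telescoping and the bookkeeping around the sets $\tilde B_i$. What the paper's version buys in exchange is a fully explicit formula for every finite-dimensional law of $\bP^h$, which is a slightly stronger output; one small addition you might make to match the paper is to note explicitly (as the paper does at the outset) that the mean-value property makes $P^h_{s,t}(x,\cdot)$ a genuine probability measure whenever $h(s,x)>0$, so that the object you exhibit really is a Markov kernel.
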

\begin{proof}
    First, observe that by the mean-value property \eqref{Equation:MeanValueProperty}, if $h(s,x)>0$, then $P^h_{s,t}(x,\cdot)$ is a probability measure on $\R^n$. Next, it suffices to check the finite-dimensional distributions of $\set{X_t}_{[0,T]}$ under $\bP^h$. Let $0 =t_0 < t_1 <\dots<t_N\leq T$ and $B_0,\dots,B_N\in\cB(\R^n)$ be arbitrary and define $\tilde{B_i}=B_i \cap \set{x_i: (t_i,x_i)\in \cA_h}$ for $i=0,1,\dots,N$. Then
    \begin{align*}
        &\bP^h\pran{X_0\in B_0, X_{t_1}\in B_1,\dots, X_{t_N}\in B_N} \\
            =& \bE_{\bP}\brac{\one_{\set{X_0\in A_0}}\frac{h(T,X_T)}{h(0,X_0)};X_{t_i} \in B_i, i=0,1,\dots,N}\\
            =& \int_{\tilde{B}_0}\int_{B_1}\dots\int_{B_N}\int_{\R^n}\frac{h(T,y)}{h(0,x_0)}P_{t_N,T}(x_{N},dy)P_{t_{N-1},t_{N}}(x_{N-1},dx_{N})\dots P_{0,t_1}(x_{0},dx_1)\rho_0(dx_0)\\
            \overset{(\dagger)}{=}& \int_{\tilde{B}_0}\int_{\tilde{B}_1}\dots\int_{\tilde{B}_N}\int_{\R^n}\frac{h(t_1,x_1)}{h(0,x_0)}\frac{h(t_2,x_2)}{h(t_1,x_1)}\dots\frac{h(T,y)}{h(t_N,x_N)}P_{t_N,T}(x_{N},dy)\dots P_{0,t_1}(x_{0},dx_1)\rho_0(dx_0)\\
            =&\int_{B_0}\int_{B_1}\dots\int_{B_N}\int_{\R^n}P^h_{t_N,T}(x_{N},dy)P^h_{t_{N-1},t_{N}}(x_{N-1},dx_{N})\dots P^h_{0,t_1}(x_{0},dx_1)\rho_0(dx_0).
    \end{align*}
    To see the equality in $(\dagger)$, suppose that for some $1 \leq i \leq N$, $h(t_i,x_i)=0$ for some $x_i\in B_i$. Then using \eqref{Equation:MeanValueProperty},
    \begin{align*}
        \int_{B_{i+1}}\dots\int_{\R^n}h(T,y)P_{t_N,T}(x_{N},dy)&\dots P_{t_{i},t_{i+1}}(x_{i},dx_{i+1})\\
        &\leq \int_{\R^n}\dots\int_{\R^n}h(T,y)P_{t_N,T}(x_{N},dy)\dots P_{t_{i},t_{i+1}}(x_{i},dx_{i+1})\\
        &= h(t_i,x_i)=0.
    \end{align*}
    Thus, the integral
    \begin{equation*}
        \int_{B_i\cap \set{x_i: (t_i,x_i)\notin \cA_h}} \int_{B_{i+1}}\dots\int_{\R^n}h(T,y)P_{t_N,T}(x_{N},dy)\dots P_{t_{i},t_{i+1}}(x_{i},dx_{i+1})P_{t_{i-1},t_i}(x_{i-1},dx_i)
    \end{equation*}
    is equal to $0$, and the equality $(\dagger)$ follows.
\end{proof}

The proof of Lemma \ref{lem_TranDistHtransform} shows that the points outside of the set $\cA_h$ have no effect on the the transition distributions under $\bP^h$. In fact, $\bP^h$ is supported on the set of sample paths that stay entirely in $\cA_h$. To make this statement accurate, we consider the following hitting times:
    \begin{equation}\label{def_tau_n}
    \text{for every $s\in[0,T]$ and every $n\geq1$},\tau^s_n:=\inf\set{t\geq s:h(t,X_t)<\frac{1}{n}}.
    \end{equation}
    Then, $\tau^s_n$'s are increasing $\set{\cF_t}$-stopping times\footnote{In general, $\tau^s_n$'s are optional times and hence stopping times when $\set{\cF_t}$ is taken to be its usual augmentation.}, and so is 
    \begin{equation}\label{def_tau}
    \tau^s:=\sup_{n\geq 1}\tau^s_n = \lim_{n\rightarrow\infty}\tau^s _n.
    \end{equation}
We are now ready to give an equivalent representation of $\bP^h$.
\begin{prop}\label{prop:equiv represent of bP^h}
Assume that $h$ satisfies the mean-value property \eqref{Equation:MeanValueProperty}. Let $\bP^h$ be as defined in \eqref{Definition:h-transform,general case with r0}. Then,
\begin{equation}\label{Definition:h-transform,general case with tau0}
    \bP^h=\frac{\one_{\set{\tau^0>T}}}{r_0}\frac{h(T,X_T)}{h(0,X_0)}\bP.
\end{equation}
In other words, $\bP^h$ is supported on $\set{\tau^0>T}$, where $(t,X_t)\in\cA_h$ for all $t\in[0,T]$.
\end{prop}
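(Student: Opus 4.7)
The plan is to prove that the densities on the right-hand sides of \eqref{Definition:h-transform,general case with r0} and \eqref{Definition:h-transform,general case with tau0} agree $\bP$-a.s. First, I would record the set-theoretic inclusion $\{\tau^0>T\}\subseteq\{X_0\in A_0\}$: if $X_0\notin A_0$ then $h(0,X_0)=0<1/n$ for every $n\geq 1$, so $\tau^0_n=0$ and hence $\tau^0=0\leq T$. Consequently, writing
\[
M_t:=\one_{\{X_0\in A_0\}}\,\frac{h(t,X_t)}{h(0,X_0)},\qquad t\in[0,T],
\]
for the non-negative $\bP$-martingale from Lemma \ref{lem_martingale1_h(T)/h(0)}, the proposition reduces to showing that $M_T=0$ $\bP$-a.s.\ on $\{\tau^0\leq T\}$, as this gives $M_T=M_T\one_{\{\tau^0>T\}}$ $\bP$-a.s.\ and thus the desired identity of the two density representations.

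The core of the argument is the classical principle that a non-negative càdlàg martingale cannot recover, in $L^1$, from getting close to zero. Passing to a càdlàg modification of $M$ (which exists because $\{\cF_t\}$ is taken in its usual augmentation), I would introduce, for each $k\geq 1$, the stopping time $S_k:=\inf\{t\in[0,T]:M_t\leq 1/k\}$. By right-continuity of $M$ and the standard ``first entry into a closed set'' argument, $M_{S_k}\leq 1/k$ on $\{S_k\leq T\}$. Optional stopping applied to the bounded stopping time $S_k\wedge T$ then yields
\[
\bE_\bP\!\left[M_T\,\one_{\{S_k\leq T\}}\right]=\bE_\bP\!\left[M_{S_k}\,\one_{\{S_k\leq T\}}\right]\leq \tfrac{1}{k}.
\]
Letting $k\to\infty$, the events $\{S_k\leq T\}$ decrease to $\{\inf_{t\in[0,T]}M_t=0\}$, and dominated convergence (with $M_T\in L^1(\bP)$ as dominator) yields $\bE_\bP[M_T\,\one_{\{\inf_{t\in[0,T]}M_t=0\}}]=0$, hence $M_T=0$ $\bP$-a.s.\ on this event.

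The remaining step is to identify $\{\inf_{t\in[0,T]}M_t=0\}$ with $\{\tau^0\leq T\}$ up to $\bP$-null sets. On $\{X_0\in A_0\}$, $M_t$ is a strictly positive multiple of $h(t,X_t)$, so the infimum vanishes iff for every $n$ there is some $t\in[0,T]$ with $h(t,X_t)<1/n$, i.e., $\tau^0_n\leq T$ for every $n$, which is precisely $\tau^0\leq T$; on $\{X_0\notin A_0\}$ both events hold trivially by the first paragraph. The main obstacle is exactly this identification: under merely measurable $h$, the pathwise map $t\mapsto h(t,X_t)$ need not itself be càdlàg, so one must carefully reconcile its behavior with that of the càdlàg modification of $M$, using the strong Markov property of Assumption \ref{Assumption:MartingaleProblem-s,x}, the mean-value property \eqref{Equation:MeanValueProperty}, and right-continuity of the canonical process to absorb the relevant null sets. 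Once the identification is in place, $M_T=0$ $\bP$-a.s.\ on $\{\tau^0\leq T\}$ delivers the representation \eqref{Definition:h-transform,general case with tau0}.
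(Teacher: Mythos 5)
Your argument takes a genuinely different route from the paper, and it has a gap that you yourself flag but do not close. The paper never introduces a c\`adl\`ag modification of the martingale $M$: it works directly with the raw hitting times $\tau^s_n$ from \eqref{def_tau_n}, applies Fatou's lemma to bound $\bE_{\bP^{s,x}}[h(t,X_t)\one_{\{\tau^s\leq t\}}]$ by a liminf over the $\tau^s_n$'s, converts each term via the strong Markov property (Assumption \ref{Assumption:MartingaleProblem-s,x}) and the mean-value property to $\bE_{\bP^{s,x}}[h(\tau^s_n,X_{\tau^s_n})\one_{\{\tau^s_n\leq t\}}]\leq n^{-1}$, and concludes $\bE_{\bP^{s,x}}[h(T,X_T)\one_{\{\tau^0\leq T\}}]=0$. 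Your plan instead runs the classical ``a non-negative c\`adl\`ag martingale is absorbed once it touches $0$'' argument on a modification $\tilde M$ of $M$, via optional stopping at $S_k=\inf\{t:\tilde M_t\leq 1/k\}$ and dominated convergence. That part is fine.

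The gap is exactly the step you call the ``main obstacle'': identifying $\{\inf_{t\in[0,T]}\tilde M_t=0\}$, which is phrased in terms of the c\`adl\`ag modification $\tilde M$, with $\{\tau^0\leq T\}$, which is phrased in terms of the original process $t\mapsto h(t,X_t)$ through the times $\tau^0_n=\inf\{t\geq 0:h(t,X_t)<1/n\}$. For a merely measurable $h$, one has $\tilde M_t=M_t$ $\bP$-a.s.\ for each fixed $t$, but the exceptional null set depends on $t$; with uncountably many times there is no a priori reason why the two processes (or the two infima along a path) agree, and so no reason why $\{\inf\tilde M=0\}$ and $\{\tau^0\leq T\}$ coincide up to a $\bP$-null set. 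Appealing vaguely to right-continuity of $X$, the strong Markov property, and the mean-value property is not enough: spelling it out would essentially amount to re-deriving the paper's estimate $\bE[h(\tau^s_n,X_{\tau^s_n})\one_{\{\tau^s_n\leq t\}}]\leq 1/n$ at the original stopping times, which the paper does directly and thereby avoids ever invoking a modification. Until that identification is carried out (or the argument is rewritten to work with the $\tau^0_n$'s directly, as the paper does), the proof is incomplete.
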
   
\begin{proof}
First, we claim that for every $0\leq s\leq t\leq T$ and every $x\in\R^n$, \begin{equation}\label{h(t,X_t)_vanish_post_tau^s}
    \bE_{\bP^{s,x}}[h(t,X_t)\one_{\set{\tau^s\leq t}}]=0.
    \end{equation}
    Indeed, since $\one_{\set{\tau^s_n\leq t}}\rightarrow \one_{\set{\tau^s\leq t}}$ as $n\rightarrow\infty$, using Fatou's lemma and the strong Markov property of $\set{X_t}_{[0,T]}$,
    \begin{equation*}
    \begin{aligned}
    \bE_{\bP^{s,x}}[h(t,X_t)\one_{\set{\tau^s\leq t}}] & \leq \liminf_{n\rightarrow\infty}\bE_{\bP^{s,x}}[h(t,X_t)\one_{\set{\tau^s_n\leq t}}]\\
    & = \liminf_{n\rightarrow\infty}\bE_{\bP^{s,x}}\brac{\bE_{\bP^{\tau^s_n,X_{\tau^s_n}}}[h(t,X_t)]\one_{\set{\tau^s_n\leq t}}}\\
    & = \liminf_{n\rightarrow\infty}\bE_{\bP^{s,x}}[h(\tau^s_n,X_{\tau^s_n})\one_{\set{\tau^s_n\leq t}}]\text{ by the mean-value property of $h$}\\
    & \leq \liminf_{n\rightarrow\infty}\frac{1}{n}\bP(\tau^s_n\leq t)=0 \text{ by right-continuity of $\set{X_t}_{[0,T]}$}.
    \end{aligned}
    \end{equation*}
It follows immediately from \eqref{h(t,X_t)_vanish_post_tau^s} that 
    \begin{equation*}
    \bP^h(\tau^0\leq T)=\frac{1}{r_0}\int_{A_0}\frac{\bE_{\bP^{0,x}}[h(T,X_T)\one_{\tau^0\leq T}]}{h(0,x)}\rho_0(dx)=0,
    \end{equation*}
and hence (\ref{Definition:h-transform,general case with tau0}) holds.
\end{proof}
In later sections, in our study of the SBP, we will consider $h$ in the form of $h(t,x)=\int g(y)P_{t,T}(x,dy)$, $(t,x)\in[0,T]\times\R^n$, for some non-negative $g$. The mean-value property is always satisfied by such an $h$, and hence $\bP^h$ is always well defined. We now have two equivalent representations \eqref{Definition:h-transform,general case with r0} and \eqref{Definition:h-transform,general case with tau0} for $\bP^h$, both of which will be useful in later sections.\\

We close this subsection by introducing a notion which is closely related to the mean-value property andn which plays an important role in the study of the $h$-transform when $h$ possesses a certain level of regularity. 

\begin{dfn}\label{Definition:HarmonicFunction}
    We say that a function $h:[0,T]\times\R^n \to [0,\infty)$ is \textup{harmonic} if $h$ is of class  $C^{1,2}$ on $[0,T]\times\R^n$, and $h$ satisfies $\cL h=\frac{\partial h}{\partial t}+Lh=0$ on $(0,T)\times\R^n$.
\end{dfn}

\noindent \textbf{Remark.} It is straightforward to see that, under Assumption \ref{Assumption:MartingaleProblem-s,x}, when $h$ is sufficiently regular, e.g., $h\in C^{1,2}_c$, the notion of $h$ being harmonic is equivalent to $h$ satisfying the mean-value property. However, our definition of the $h$-transform only requires the latter.

\subsection{$h$-Transform: the Generator Approach}\label{Subsection:Generator Approach}
In this subsection we study the $h$-transform of a path measure by analyzing the change occurred in the generator. We further derive the martingale problem formulation corresponding to the transformed path measure in the case when $h$ is a harmonic function.

We assume that the operator $L$ in (\ref{ReferenceGenerator}) satisfies Assumption \ref{Assumption:MartingaleProblem-s,x} and $\bP$ is a solution to the martingale problem for $(L,\rho_0)$. For a non-negative function $h$ of class $C^{1,2}$ on $[0,T]\times\R^n$, we define $\cA_h$ as in \eqref{definition_A_h} and $\cL^h,L^h$ integro-differential operators as: for $f\in C^{1,2}(\R^+\times \R^n)$,
\begin{equation}\label{Definition:L^h}
    \cL^hf:=\one_{\cA_h}\frac{\cL(hf)}{h} \text{ and } L^hf:=\one_{\cA_h}(\cL^hf-\frac{\partial f}{\partial t}).
\end{equation}
As discussed in the previous subsection, when $h$ satisfies the mean-value property \eqref{Equation:MeanValueProperty}, we refer to the path measure $\bP^h$ defined in \eqref{Definition:h-transform,general case with r0} as the $h$-transform of $\bP$. We will show that if, in addition, $h$ is harmonic, then $\bP^h$ solves the martingale problem for $L^h$ with a proper initial distribution. First, let us present the explicit expression of $L^h$.
\begin{lem}\label{Theorem:ProductRule}
Let $L$ be the jump-diffusion operator defined in \eqref{ReferenceGenerator}, $h$ be a non-negative function of class $C^{1,2}$ on $[0,T]\times\R^n$, and $\cA_h$ be as in \eqref{definition_A_h}. Define, for every $(t,x)\in \cA_h$, 
\begin{equation}
\label{definition_b^h}\begin{aligned} b^h(t,x):=b(t,x)& +\sigma\sigma^T\nabla
 \log h(t,x)+ \\
 & \int_{|z|\leq 1} \frac{h(t,x+\gamma(t,x,z))-h(t,x)}{h(t,x)}\gamma(t,x,z)\nu(dz),
\end{aligned}
\end{equation}
and
\begin{equation}\label{definition_nu^h}
 \nu^h(t,x;dz) :=  \frac{h(t,x+\gamma(t,x,z))}{h(t,x)}\nu(dz).
\end{equation}
Then for every $f\in C^{1,2}(\R^+\times \R^n)$, at $(t,x)\in \cA_h$, \begin{equation*}
    L(hf)(t,x)=\left (h\tilde{L}f+fLh\right ) (t,x),
\end{equation*}
where $\tilde{L}$ is the operator defined in \eqref{ReferenceGenerator} with $b$ replaced by $b^h$, and $\nu$ replaced by $\nu^h$. 
Equivalently, writing $\tilde{\cL}:=\frac{\partial}{\partial t}+\tilde{L}$, at $(t,x)\in \cA_h$, 
\begin{equation}\label{Equation:GeneratorProduct-withd/dt}
    \cL(hf) (t,x) =\left(h\tilde{\cL}f+f\cL h\right)(t,x).
\end{equation}
In particular, if $h$ is harmonic, then $$\tilde{\cL}f=\frac{1}{h}\cL(hf)=\cL^hf\text{ and }L^hf=\tilde{L}f\text{ on }\cA_h.$$
\end{lem}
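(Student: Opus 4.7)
The plan is to verify the identity $L(hf) = h\tilde{L}f + fLh$ on $\cA_h$ by expanding $L(hf)$ term by term according to the three components of $L$ (drift, diffusion, jump), and then reorganizing the result so that the ``pure'' parts fit into $h\tilde{L}f$ (with $b$ replaced by $b^h$ and $\nu$ by $\nu^h$) and $fLh$, while the cross-terms account exactly for the discrepancy $b^h - b$. The addition of $\partial_t(hf) = h\,\partial_t f + f\,\partial_t h$ then promotes the identity to $\cL$. Throughout, all integrals against $\nu$ are well defined because $h, f \in C^{1,2}$, $\int (1\wedge|z|^2)\nu(dz) < \infty$, and $\sup_z |\gamma(t,x,z)|/(1\wedge|z|)$ is locally bounded by Assumptions \ref{Assumptions B}.

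First, the drift part splits cleanly by the usual product rule: $b\cdot\nabla(hf) = h(b\cdot\nabla f) + f(b\cdot\nabla h)$, contributing the unmodified drift $b$ to both $h\tilde{L}f$ and $fLh$. Next, expanding $\partial_{ij}(hf) = h\,\partial_{ij}f + \partial_i h\,\partial_j f + \partial_j h\,\partial_i f + f\,\partial_{ij}h$, the second-order term of $L(hf)$ is
\[
\tfrac{1}{2}h\sum_{i,j}(\sigma\sigma^T)_{ij}\partial_{ij}f \;+\; (\sigma\sigma^T\nabla h)\cdot\nabla f \;+\; \tfrac{1}{2}f\sum_{i,j}(\sigma\sigma^T)_{ij}\partial_{ij}h.
\]
The outer two terms go into $h\tilde{L}f$ and $fLh$ respectively; the cross-term is rewritten on $\cA_h$ as $h\cdot(\sigma\sigma^T\nabla\log h)\cdot\nabla f$, which is precisely the contribution of the second summand of $b^h$ in (\ref{definition_b^h}) to $h\tilde{L}f$.

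The main obstacle is the jump integral, where one must be careful with the $\one_{|z|\leq 1}$ compensator and with the switch of jump measure from $\nu$ to $\nu^h$. The key algebraic step is to decompose the integrand of $L(hf)$ as
\begin{align*}
& (hf)(t,x+\gamma) - (hf)(t,x) - \one_{|z|\leq 1}\gamma\cdot\nabla(hf)(t,x) \\
&\quad = h(t,x+\gamma)\bigl[f(t,x+\gamma) - f(t,x) - \one_{|z|\leq 1}\gamma\cdot\nabla f(t,x)\bigr] \\
&\qquad + f(t,x)\bigl[h(t,x+\gamma) - h(t,x) - \one_{|z|\leq 1}\gamma\cdot\nabla h(t,x)\bigr] \\
&\qquad + \one_{|z|\leq 1}\bigl[h(t,x+\gamma) - h(t,x)\bigr]\gamma\cdot\nabla f(t,x).
\end{align*}
Integrating the first line against $\nu(dz)$ gives exactly $h(t,x)\int[f(t,x+\gamma)-f(t,x)-\one_{|z|\leq 1}\gamma\cdot\nabla f]\nu^h(t,x;dz)$ by definition (\ref{definition_nu^h}), i.e., the jump part of $h\tilde{L}f$. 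The second line yields the jump part of $fLh$ directly. The remaining term, integrated against $\nu$, equals $h(t,x)\cdot\bigl(\int_{|z|\leq 1}\tfrac{h(t,x+\gamma)-h(t,x)}{h(t,x)}\gamma\,\nu(dz)\bigr)\cdot\nabla f(t,x)$, which is precisely the contribution of the third summand of $b^h$ to $h\tilde{L}f$.

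Summing the three components yields $L(hf) = h\tilde{L}f + fLh$ on $\cA_h$, and adding $\partial_t(hf) = h\,\partial_t f + f\,\partial_t h$ produces (\ref{Equation:GeneratorProduct-withd/dt}). The final claim follows: when $h$ is harmonic, $\cL h \equiv 0$ on $(0,T)\times\R^n$, so (\ref{Equation:GeneratorProduct-withd/dt}) reduces to $\cL(hf) = h\tilde{\cL}f$, and dividing by $h$ on $\cA_h$ gives $\tilde{\cL}f = \cL^h f$ and hence $\tilde{L}f = L^h f$ by (\ref{Definition:L^h}).
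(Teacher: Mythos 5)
Your proof is correct and is exactly the direct term-by-term computation that the paper indicates but omits (``derived by direct computations''); the three-way decomposition of the jump integrand is the only nonobvious step and you carry it out correctly, with the two cross-terms matching the $\sigma\sigma^T\nabla\log h$ and $\int_{|z|\leq 1}\frac{h(\cdot+\gamma)-h}{h}\gamma\,\nu(dz)$ contributions to $b^h$, and the weight $h(t,x+\gamma)$ on the first line converting $\nu$ to $h(t,x)\nu^h$ as needed.
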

\noindent All the equations in the lemma are derived by direct computations, and hence the proof is omitted. We will only point out that, under Assumptions \ref{Assumptions B} on $\sigma,b,\gamma$, it is easy to verify that for every $(t,x)\in \cA_h$, the integral in (\ref{definition_b^h}) is finite and for $\nu^h(t,x,\cdot)$ defined in (\ref{definition_nu^h}),
\begin{equation*}
\int_{\R^\ell}(1\wedge|z|^2)\nu^h(t,x;dz)<\infty,
\end{equation*}
and hence $b^h$ is well defined and $\nu^h(t,x;\cdot)$ is a L\'evy measure.\\

We are now ready to formulate the martingale problem statement involving $\bP^h$ and $L^h$.
\begin{thm}\label{Theorem:MartingaleProblem-Lh}
    Assume that Assumption \textup{\ref{Assumption:MartingaleProblem-s,x}} holds for the jump-diffusion operator $L$ and $\bP$ is the solution to the martingale problem for $(L,\rho_0)$. Let $h$ be a harmonic function that satisfies the mean-value property, and let $\bP^h$ and $L^h$ be defined as in \eqref{Definition:h-transform,general case with r0} and \eqref{Definition:L^h} respectively. Then $\bP^h$ solves the martingale problem for $(L^h,\rho^h_0)$, where $\rho^h_0:=\frac{\one_{A_0}}{r_0}\rho_0$ with $A_0$ and $r_0$ defined as in \eqref{definition_A0r0}. Moreover, the canonical process $\set{X_t}_{[0,T]}$ under $\bP^h$ is a strong Markov process.
\end{thm}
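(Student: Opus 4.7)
The plan is to verify the two parts of the statement separately: first the martingale problem for $(L^h,\rho_0^h)$, and then the strong Markov property of $\set{X_t}_{[0,T]}$ under $\bP^h$. The initial distribution identity $\bP^h_0=\rho_0^h$ is immediate from the definition \eqref{Definition:h-transform,general case with r0} together with a single application of the mean-value property. The substantive task is to show that for every $\phi\in C_c^{1,2}(\R^+\times\R^n)$, the process $\phi(t,X_t)-\phi(0,X_0)-\int_0^t(\partial_r+L^h)\phi(r,X_r)\,dr$ is a $\bP^h$-martingale on $[0,T]$. Since $\set{X_t}_{[0,T]}$ under $\bP^h$ is Markov with the explicit transition kernel of Lemma \ref{lem_TranDistHtransform}, I would first reduce to verifying, for every starting point $(s,x)\in\cA_h$ and every $t\in[s,T]$, the expectation identity between $\phi(t,X_t)-\phi(s,x)$ and $\int_s^t(\partial_r+L^h)\phi(r,X_r)\,dr$ under $\bP^{h,s,x}$, the $h$-transform of $\bP^{s,x}$.

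The pivotal observation is that $h\phi\in C_c^{1,2}(\R^+\times\R^n)$, because $\phi$ has compact support while $h$ is of class $C^{1,2}$ on $[0,T]\times\R^n$. Hence Assumption \ref{Assumption:MartingaleProblem-s,x} applied to $h\phi$ produces a $\bP^{s,x}$-martingale whose compensator involves $\cL(h\phi)$, and Lemma \ref{Theorem:ProductRule} together with harmonicity $\cL h=0$ yields the crucial identity $\cL(h\phi)=h\,(\partial_r+L^h)\phi$ on $\cA_h$. To deploy this identity along the trajectory, I would localize using the stopping times $\tau^s_n$ from \eqref{def_tau_n}, on which $h(r,X_r)\geq 1/n$ for $r<\tau^s_n$, so $(r,X_r)$ stays inside $\cA_h$. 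Combining optional stopping with the mean-value relation $\bE_{\bP^{s,x}}[h(T,X_T)\mid\cF_r]=h(r,X_r)$ and two applications of Fubini, a direct calculation then delivers
\begin{equation*}
\bE_{\bP^{h,s,x}}\brac{\phi(t\wedge\tau^s_n,X_{t\wedge\tau^s_n})}-\phi(s,x)=\bE_{\bP^{h,s,x}}\brac{\int_s^{t\wedge\tau^s_n}(\partial_r+L^h)\phi(r,X_r)\,dr}.
\end{equation*}
Sending $n\to\infty$ uses Proposition \ref{prop:equiv represent of bP^h} (extended to $\bP^{h,s,x}$, giving $\tau^s>T$ almost surely) together with dominated convergence, which is legitimate because after the mean-value conversion back to $\bP^{s,x}$ both integrands are recognizable as restrictions of the bounded function $\cL(h\phi)$. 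Finally, upgrading the expectation identity to a conditional version via arbitrary $\cF_u$-measurable events delivers the full $\bP^{h,s,x}$-martingale property.

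The strong Markov property of $\set{X_t}_{[0,T]}$ under $\bP^h$ then follows from the multiplicative structure of the Radon-Nikodym density $h(T,X_T)/h(s,x)$, the strong Markov property of $\bP$ granted by Assumption \ref{Assumption:MartingaleProblem-s,x}, and the fact that $(\tau,X_\tau)\in\cA_h$ $\bP^h$-almost surely for every $\set{\cF_t}$-stopping time $\tau\leq T$, again by Proposition \ref{prop:equiv represent of bP^h}. The main obstacle I anticipate is the careful handling of the zero set of $h$: the operator $L^h$ is a bona fide jump-diffusion operator only on $\cA_h$, and off $\cA_h$ the identity from Lemma \ref{Theorem:ProductRule} can fail because $\cL(h\phi)$ need not vanish there. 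Consequently every step must be executed inside the safeguard of the localization $\tau^s_n$, and the final passage to the limit relies essentially on the support property of $\bP^h$ inside $\cA_h$ established in Proposition \ref{prop:equiv represent of bP^h}.
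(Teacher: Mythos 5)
Your proposal is correct and follows essentially the same route as the paper's proof: apply Assumption \ref{Assumption:MartingaleProblem-s,x} to $h\phi\in C^{1,2}_c$, invoke Lemma \ref{Theorem:ProductRule} with harmonicity to convert $\cL(h\phi)$ into $h\,\cL^h\phi$ on $\cA_h$, localize with the stopping times $\tau^s_n$, and let the mean-value property together with the support property from Proposition \ref{prop:equiv represent of bP^h} absorb the contribution from the zero set of $h$. The only cosmetic difference is that you reduce to the per-starting-point measures $\bP^{h,s,x}$ via Lemma \ref{lem_TranDistHtransform} and then upgrade, whereas the paper works directly with $\int_A(\cdot)\,d\bP^h$ over $A\in\cF_s$ and localizes with $\tau^s$ itself.
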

\begin{proof}
    We first check the initial distribution. Let $B\subseteq \R^n$ be any Borel set, then
    \begin{align*}
        \bP^h(X_0\in B) &= \bE_{\bP}\brac{\frac{1}{r_0}\frac{h(T,X_T)}{h(0,X_0)}; X_0\in (B\cap A_0)} \\
        &= \int_{B\cap A_0}\frac{1}{r_0}\frac{1}{h(0,x)}\int_{\R^n}h(T,y)P_{0,T}(x,dy)\rho_0(dx)\\
        &=\frac{1}{r_0}\rho_0(B\cap A_0)\text{ by the mean-value property of $h$.}
    \end{align*}
    
    Now, we verify that for every $f\in C^{1,2}_c$, $\set{f(t,X_t)-\int_0^t\cL^hf(r,X_r)dr \,:\,0\leq t \leq T}$ under $\bP^h$
    is a martingale. Let $0\leq s \leq t \leq T$ and $A\in \cF_s$ be arbitrary. Then, by Lemma \ref{lem_martingale1_h(T)/h(0)},
\begin{align*}
    \int_A (f(t,X_t)-f(s,X_s))d\bP^h
        &= \frac{1}{r_0} \int_{A}\one_{A_0}(X_0) \frac{h(T,X_T)}{h(0,X_0)}\pran{f(t,X_t)-f(s,X_s)}d\bP \\
        &= \frac{1}{r_0}\int_A \frac{\one_{A_0}(X_0)}{h(0,X_0)}\pran{(hf)(t,X_t)-(hf)(s,X_s)}d\bP \\
        &=\frac{1}{r_0}\int_A \frac{\one_{A_0}(X_0)}{h(0,X_0)}\pran{(hf)(t,X_t)-(hf)(t\wedge \tau^s,X_{t\wedge \tau^s})}d\bP\\
        &\hspace{0.8cm}+\frac{1}{r_0}\int_A \frac{\one_{A_0}(X_0)}{h(0,X_0)}\pran{(hf)(t\wedge \tau^s,X_{t\wedge \tau^s})-(hf)(s,X_s)}d\bP,
\end{align*}
where $\tau^s$ is as in \eqref{def_tau}. We first treat the integral involving the difference in $hf$ between $t\wedge\tau^s$ and $t$, and it suffices to restrict the integral to $\set{\tau^s\leq t}$. That is,
\begin{align*}
\frac{1}{r_0}\int_{A\cap\set{\tau^s\leq t}} \frac{\one_{A_0}(X_0)}{h(0,X_0)}&\left((hf)(t,X_t)-(hf)(\tau^s,X_{\tau^s})\right)d\bP\\
    &=\frac{1}{r_0}\int_{A\cap\set{\tau^s\leq t}} \frac{\one_{A_0}(X_0)}{h(0,X_0)}\left(h(t,X_t)(f(t,X_t)-f(\tau^s,X_{ \tau^s})\right)d\bP\\
& \hspace{0.5cm} +\frac{1}{r_0}\int_{A\cap\set{\tau^s\leq t}} \frac{\one_{A_0}(X_0)}{h(0,X_0)}\left((h(t,X_t)-h(\tau^s,X_{\tau^s})\right)f(\tau^s,X_{\tau^s})d\bP.
\end{align*}
Conditioning on $\cF_{\tau^s}$ inside the second integral above and using the strong Markov property of $\set{X_t}_{[0,T]}$ under $\bP$, we see that the second integral vanishes due to the mean-value property of $h$. Conditioning on $\cF_s$ inside the first integral and using the boundedness of $f$, we see that the first integral is bounded from above by
\begin{equation*}
\frac{\norm{f}_\text{u}}{r_0}\int_A \frac{\one_{A_0}(X_0)}{h(0,X_0)}\bE_{\bP^{s,X_s}}[h(t,X_t)\one_{\tau^s\leq t}]d\bP,
\end{equation*}
which also vanishes according to \eqref{h(t,X_t)_vanish_post_tau^s}. Therefore, we have
\begin{flalign*}
     \int_A &\left(f(t,X_t)-f(s,X_s)\right)d\bP^h\\
        &= \frac{1}{r_0}\int_A \frac{\one_{A_0}(X_0)}{h(0,X_0)}\pran{(hf)(t\wedge\tau^s,X_{t\wedge \tau^s})-(hf)(s,X_s)}d\bP\\
        &= \frac{1}{r_0}\int_A \frac{\one_{A_0}(X_0)}{h(0,X_0)}\int_s^{t\wedge\tau^s} \cL(hf)(r,X_r)\,dr\,d\bP \text{ by \ref{Assumption:MartingaleProblem-s,x}, since $hf \in C^{1,2}_c$} \\
        &=\frac{1}{r_0}\int_A \int_s^{t\wedge\tau^s} \one_{A_0}(X_0)\frac{h(r,X_r)}{h(0,X_0)} \cL^hf(r,X_r)\,dr\,d\bP \text{ since $\cL h=0$ and $\set{(r,X_r)}_{[s,\tau^s)}\subseteq \cA_h$}\\
        &=\frac{1}{r_0}\int_A \one_{A_0}(X_0)\frac{h(T,X_T)}{h(0,X_0)}\int_s^{t\wedge\tau^s} \cL^hf(r,X_r)\,dr\,d\bP \text{ using the martingale property again}\\
        &= \int_A \int_s^{t\wedge\tau^s} \cL^hf(s,X_s)\,ds\,d\bP^h=\int_A \int_s^t \cL^hf(s,X_s)\,ds\,d\bP^h \text{ as desired,}
\end{flalign*}
where the last equation is due to the fact that $\bP^h$ is supported on $\set{
\tau^0>T}\subseteq\set{\tau^s>t}$.

The strong Markov property of $\set{X_t}_{[0,T]}$ under $\bP^h$ can be established following a similar argument.
\end{proof}
We have identified $\bP^h$ as the solution to the martingale problem for $(L^h,\rho_0^h)$. Seeing that $L^h$ takes the same form as $L$ with $b$ replaced by $b^h$ and $\nu$ replaced by $\nu^h$, it is tempting to conclude that  $(\set{X_t}_{[0,T]},\bP^h)$ is a (weak) solution to the SDE
\begin{equation*}
\begin{aligned}
   dX_t=b^h(t,X_t)dt&+\sigma(t,X_t)dB_t \\
   &+ \int_{|z|\leq 1}\gamma(t,X_{t-},z)\tilde{N}^h(dt,dz) 
   + \int_{|z|>1}\gamma(t,X_{t-},z)N(dt,dz),
   \end{aligned}
\end{equation*}
where $\tilde{N}^h(dt,dz):=N(dt,dz)-\nu^h(dz)dt$.
However, it is not clear in the jump-diffusion case when the solution to the martingale problem leads to a weak solution of the corresponding SDE. In the next subsection, we will impose stronger assumptions and use the Girsanov theorem for jump diffusions to derive the SDE satisfied by $\set{X_t}_{[0,T]}$ under $\bP^h$.
\subsection{$h$-Transform: the SDE Approach}
In this subsection, we put our jump-diffusion model in the SDE setting and investigate the $h$-transform from the point of view of the Girsanov theorem. We assume that \ref{Assumption:SDEsolution} holds, that is, there exists a weak solution $(\set{X_t}_{[0,T]},\bP)$ to the SDE \eqref{ReferenceLevyIto} where $\bP$ is a path measure on $\Omega=D([0,T];\R^n)$ and $\set{X_t}_{[0,T]}$ under $\bP$ is strong Markov with initial distribution $X_0\sim\rho_0$. Recall that under Assumptions \ref{Assumptions B} imposed on $\sigma$, $b$ and $\gamma$, all the stochastic integrals involved in \eqref{ReferenceLevyIto} are well defined. Throughout this subsection, we consider $h$ to be a non-negative harmonic function that satisfies the mean-value property \eqref{Equation:MeanValueProperty}, and let $\bP^h$ be the $h$-transform of $\bP$ given by  \eqref{Definition:h-transform,general case with tau0}.
We first demonstrate a particular Girsanov transformation which coincides with $\bP^h$, and then we derive the SDE representation of the process under $\bP^h$. 
\begin{thm}\label{GirsanovHTransform}
    Assume that \textup{\ref{Assumption:SDEsolution}} holds for the SDE \eqref{ReferenceLevyIto} and $(\set{X_t}_{[0,T]},\bP)$ is the solution to \eqref{ReferenceLevyIto} that is a strong Markov process with $X_0\sim\rho_0$. Let $h$ be a non-negative harmonic function and satisfies the mean-value property \eqref{Equation:MeanValueProperty}. 
    Let $\set{Z_t}_{[0,T]}$ be the process given by 
    
    \begin{equation}\label{Equation:Girsanov Zt}
 \begin{aligned} Z_t:=\exp\bigg(-\int_0^tu(s)dB_s-\frac{1}{2}\int_0^t&|u(s)|^2ds+\int_0^t\int_{\R^\ell}\log(1-\theta(s,z))\tilde{N}(ds,dz) \\    &+\int_0^t\int_{\R^\ell}\brac{\log(1-\theta(s,z))+\theta(s,z)}\nu(dz)ds\bigg)
 \end{aligned}
\end{equation}
     with
    \begin{equation}\label{hTransform-u,theta}
        \begin{cases}
            u(t):=-\one_{\set{\tau_n^0\geq t\textup{ ev.}}}\sigma^T\nabla\log h(t,X_{t})\textup{ for }t\in[0,T], \\
            \displaystyle \theta(t,z) := 1-\one_{\set{\tau_n^0\geq t\textup{ ev.}}}\frac{h(t,X_{t-}+\gamma(t,X_{t-},z))}{h(t,X_{t-})}\textup{ for }t\in[0,T],\,z\in\R^\ell,
        \end{cases}
    \end{equation}  
    where  $\set{\tau_n^0\geq t\textup{ ev.}}:=\bigcup_{n\geq 1}\set{\tau_n^0\geq t}$ for $\set{\tau_n^0}_{n\geq 1}$ being the sequence of stopping times defined in \eqref{def_tau_n}. Then, $\set{Z_t}_{[0,T]}$ is a $\set{\cF_t}$-local-martingale under $\frac{\one_{\set{X_0\in A_0}}}{r_0}\bP$. 
   
   Further, if $\bP^h$ is the $h$-transform of $\bP$ as defined in \eqref{Definition:h-transform,general case with tau0}, then \begin{equation}\label{def_Q^h}  \bP^h=Z_T\frac{\one_{\set{\tau_0>T}}}{r_0}\bP.
    \end{equation}
   Hence if $\bP(\tau^0>T)=r_0$, then $\bP^h$ is the Girsanov transform of $\frac{\one_{\set{\tau^0>T}}}{r_0}\bP$ with the above coefficients $u(t),\theta(t,z)$.
\end{thm}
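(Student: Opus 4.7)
\medskip
\noindent\textbf{Proof proposal.} The plan is to split the claim into two parts: first verify that $\set{Z_t}_{[0,T]}$ is a local martingale under $\frac{\one_{\set{X_0\in A_0}}}{r_0}\bP$, and then establish that on the event $\set{t<\tau^0}$, $Z_t=h(t,X_t)/h(0,X_0)$. Once the latter is in hand at $t=T$, comparing with \eqref{Definition:h-transform,general case with tau0} immediately yields $\bP^h=Z_T\frac{\one_{\set{\tau^0>T}}}{r_0}\bP$; and the concluding sentence of the theorem is then automatic, because the hypothesis $\bP(\tau^0>T)=r_0$ together with the almost-sure inclusion $\set{\tau^0>T}\subseteq\set{X_0\in A_0}$ forces these two events to coincide up to a $\bP$-null set. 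For the local martingale property I would recognize $Z_t$ as the Dol\'eans--Dade stochastic exponential $\mathcal{E}(M)_t$ of
\[
M_t:=-\int_0^tu(s)dB_s-\int_0^t\int_{\R^\ell}\theta(s,z)\tilde N(ds,dz),
\]
expand $\mathcal{E}(M)_t=\exp(M_t-\tfrac12\langle M^c\rangle_t)\prod_{s\leq t}(1+\Delta M_s)e^{-\Delta M_s}$, rewrite the product of jumps as $\exp\bigl(\int_0^t\int\log(1-\theta)N(ds,dz)\bigr)$, and split $N=\tilde N+\nu(dz)ds$ to recover \eqref{Equation:Girsanov Zt} line by line. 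The cutoff $\one_{\set{\tau^0_n\geq t\text{ ev.}}}$ ensures that on each $[0,\tau^0_n]$, $u$ is bounded (since $h\in C^{1,2}$ and $h\geq 1/n$) and $1-\theta$ is bounded away from $0$, so $M$ is a genuine local martingale and so is $Z=\mathcal{E}(M)$, both under $\bP$ and under any equivalent measure whose $\bP$-density is $\cF_0$-measurable.

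The heart of the argument is showing $Z_t=h(t,X_t)/h(0,X_0)$ on $\set{t<\tau^0}$. The idea is to apply the jump-diffusion It\^o formula to $f(t,x):=\log h(t,x)$ on the stochastic interval $[0,T\wedge\tau^0_n]$, where $h\geq 1/n$ and hence $\log h$ is $C^{1,2}$. Setting $\alpha:=h(t,x+\gamma(t,x,z))/h(t,x)$, the chain rule and the form of $L$ in \eqref{ReferenceGenerator} give the identity
\[
(\tfrac{\partial}{\partial t}+L)\log h=\frac{(\partial_t+L)h}{h}-\tfrac12|\sigma^T\nabla\log h|^2+\int_{\R^\ell}\brac{\log\alpha-(\alpha-1)}\nu(dz),
\]
whose first term vanishes by harmonicity. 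Combining this with It\^o's formula yields
\begin{align*}
\log h(t,X_t)-\log h(0,X_0)&=-\tfrac12\int_0^t|\sigma^T\nabla\log h|^2ds+\int_0^t\int_{\R^\ell}\brac{\log\alpha-(\alpha-1)}\nu(dz)ds\\
&\quad+\int_0^t\sigma^T\nabla\log h\cdot dB_s+\int_0^t\int_{\R^\ell}\log\alpha\,\tilde N(ds,dz).
\end{align*}
Substituting $u=-\sigma^T\nabla\log h$ and $1-\theta=\alpha$ into the defining formula \eqref{Equation:Girsanov Zt} of $\log Z_t$ produces exactly the same right-hand side, so $Z_{t\wedge\tau^0_n}=h(t\wedge\tau^0_n,X_{t\wedge\tau^0_n})/h(0,X_0)$. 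Letting $n\to\infty$ and restricting to $\set{\tau^0>T}$ yields the desired identity.

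The main obstacle I anticipate is the careful handling of the zero set of $h$. Because $h$ may vanish, the It\^o computation is only valid up to $T\wedge\tau^0_n$, so one has to verify that both sides of $Z_T=h(T,X_T)/h(0,X_0)$ arise as the correct $n\to\infty$ limits on $\set{\tau^0>T}$; this relies on the continuity of $h$, right-continuity of $\set{X_t}_{[0,T]}$, and the estimate \eqref{h(t,X_t)_vanish_post_tau^s}, which guarantees that no mass of $\bP^h$ escapes past $\tau^0$. The cutoff indicators $\one_{\set{\tau^0_n\geq t\text{ ev.}}}$ in the definitions of $u$ and $\theta$ are a cosmetic device to keep $Z_t$ well-defined pathwise on all of $\Omega$, and checking that these conventions are consistent with the localized It\^o identity is the routine but slightly delicate bookkeeping step.
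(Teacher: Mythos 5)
Your proposal is essentially correct and its core step --- applying the jump-diffusion It\^o formula to $\log h$ on $[0,T\wedge\tau^0_n]$, using $\cL h=0$ to kill the $\cL h/h$ term, and matching the result line by line with the definition of $\log Z_t$ --- is exactly the argument the paper gives (the paper works with $-\log h$ rather than $\log h$, which is an immaterial sign choice). The one genuine difference is the order of logic for the local-martingale claim: you propose a \emph{separate} Dol\'eans--Dade argument ($Z=\mathcal{E}(M)$ with $M$ a local martingale), whereas the paper derives the local-martingale property as an immediate corollary of the identity $Z_{t\wedge\tau^0_n}=h(t\wedge\tau^0_n,X_{t\wedge\tau^0_n})/h(0,X_0)$ together with the mean-value property of $h$. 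The paper explicitly flags this as a deliberate choice to bypass the integrability verification that would be required to show $M$ itself is a local martingale; your Dol\'eans--Dade route does require those estimates (which the paper establishes anyway in its opening paragraph, so your route also goes through, but it is an extra, redundant layer once the identity is in hand). One tiny imprecision: $\frac{\one_{\set{X_0\in A_0}}}{r_0}\bP$ is absolutely continuous with respect to $\bP$ but not \emph{equivalent} to it unless $r_0=1$; what matters, and what you correctly exploit, is only that the density is $\cF_0$-measurable so local martingales are preserved on the support. Your reading of the final sentence (that $\set{\tau^0>T}\subseteq\set{X_0\in A_0}$, so $r_0=\bP(\tau^0>T)$ forces the two events to agree up to a null set) also matches the paper.
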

\begin{proof}
We first remark that for every $n\geq 1$ and $t\in[0,T]$, $h(s,X_{s-})$ is bounded away from $0$ uniformly in $s\in[0,t]$ along every sample path in $\set{\tau_n^0\geq t}$, and hence $u(t)$ and $\theta(t,z)$ in \eqref{hTransform-u,theta} are well defined. Since $h$ is of class $C^{1,2}$ on $[0,T]\times\R^n$ and $\set{X_t}_{[0,T]}$ has c\`adl\`ag sample paths, it follows that
\begin{equation*}
    \sup_{t\in[0,T]}|u(t)|<\infty,\,\sup_{t\in[0,T],|z|\geq1}|\theta(t,z)|<\infty,\text{ and }\sup_{t\in[0,T],|z|\leq 1}\abs{\frac{\theta(t,z)}{\gamma(t,X_{t-},z)}}<\infty \text{ a.s.}
\end{equation*}
Furthermore, for every $n\geq1$,  $\set{\one_{\set{\tau_n^0\geq t}}}_{[0,T]}$ is a predictable process with respect to the underlying filtration \cite[Chapter 
 V, Section 8]{rogers2000diffusionsVOL2}, which implies that $\set{u(t)}_{[0,T]}$ and $\set{\theta(t,z)}_{[0,T]}$ are also predictable. 

Given a function $h$ as in the statement of the theorem, let $\cA_h$ be as in \eqref{definition_A_h}, and define $f=-\log h$ on $\cA_h$. Then $f$ is of class $C^{1,2}$ on $\cA_h$. By direct computations, we have
\begin{equation*}
     \cL f =-\frac{1}{h}\cL h+ \frac{1}{2}|\sigma^T\nabla\log h|^2 + \int_{\R^\ell} \brac{\frac{h(\cdot+\gamma)-h}{h}-\log\frac{h(\cdot+\gamma)}{h}}\nu(dz).
\end{equation*}
Combining \eqref{hTransform-u,theta} with It\^o's formula (e.g. \cite[Theorem 4.4.7]{applebaum_2009}) applied to $f$, while incorporating the stopping times $\set{\tau_n^0}_{n\geq1}$, we have that for every $n\geq1$,
\begin{equation}\label{NegativeLogExpanded}
\begin{aligned}
    \log\pran{\frac{h(t\wedge\tau_n^0,X_{t\wedge\tau_n^0})}{h(0,X_0)}}=&\int_0^{t\wedge\tau_n^0}u(s)\,dB_s-\frac{1}{2}\int_0^{t\wedge\tau_n^0}|u(s)|^2\,ds\, \\
    &+\int_0^{t\wedge\tau_n^0}\int_{\R^\ell} \log(1-\theta(s,z))\tilde{N}(ds,dz)\\
    &+\int_0^{t\wedge\tau_n^0}\int_{\R^\ell} \brac{\theta(s,z)+\log(1-\theta(s,z))}\nu(dz)ds.
\end{aligned}
\end{equation}
By the remark made at the beginning of the proof, it is guaranteed that all the stochastic integrals in \eqref{NegativeLogExpanded} are well defined and  \eqref{NegativeLogExpanded} holds a.s. On the other hand, if we define $Z_t$ according to \eqref{Equation:Girsanov Zt}, then it follows from \eqref{NegativeLogExpanded} that, for every $n\geq 1$, \[Z_{t\wedge\tau^0_n}=\frac{h(t\wedge\tau^0_n,X_{t\wedge\tau^0_n})}{h(0,X_0)}\text{ for }t\in[0,T].\] Since $h$ satisfies the mean-value property, $\set{Z_t}_{[0,T]}$ is a local martingale under $\frac{\one_{\set{X_0\in A_0}}}{r_0}\bP$. Note that this proof of $\set{Z_t}_{[0,T]}$ being a local martingale bypasses the usual integrability requirements on $u(t)$ and $\theta(t,z)$.

Since $\set{\tau^0>T}\subseteq\set{\tau^0_n\leq t\text{ ev.}}$ for every $t\in[0,T]$, the relation \eqref{def_Q^h} follows immediately from \eqref{Definition:h-transform,general case with tau0} and \eqref{NegativeLogExpanded}.
Now assume that $r_0=\bP(\tau^0>T)$. Then, $\frac{\one_{\set{\tau^0>T}}}{r_0}\bP$ becomes a path measure, under which $\set{Z_t}_{[0,T]}$ becomes an $\set{\cF_t}$-martingale. Furthermore, by Proposition \ref{prop:equiv represent of bP^h},\[
\frac{1}{r_0}\bE_\bP\brac{Z_T\one_{\set{\tau^0>T}}}=\frac{1}{r_0}\bE_\bP\brac{\frac{h(T,X_T)}{h(0,X_0)}\one_{\set{\tau^0>T}}}=1.
\] We conclude that $\bP^h$ is the Girsanov transform of $\frac{\one_{\set{\tau^0>T}}}{r_0}\bP$ (e.g., \cite{oksendal:hal-02411121}, Theorem 1.33) with the given $u(t)$ and $\theta(t,z)$.
\end{proof}
Having identified  $\bP^h$ as a particular Girsanov transformation, we are ready to state the SDE that $\set{X_t}_{[0,T]}$ satisfies under $\bP^h$. 
\begin{cor}\label{Theorem:SDEunderPh}
Under the same setting as in Theorem \ref{GirsanovHTransform}, $(\set{X_t}_{[0,T]},\bP^h)$ is a solution to the following SDE:
\begin{equation}\label{hTransformSDE}
\begin{aligned}
   dX_t=b^h(t,X_t)dt&+\sigma(t,X_t)dB^h_t \\
   &+ \int_{|z|\leq 1}\gamma(t,X_{t-},z)\tilde{N}^h(dt,dz) 
   + \int_{|z|>1}\gamma(t,X_{t-},z)N(dt,dz),
   \end{aligned}
\end{equation}
where $b^h$ is as defined in \eqref{definition_b^h}, $\set{B^h_t}$ is a standard $\bP^h$-Brownian motion in $\R^m$, and  $\tilde{N}^h(dt,dz)$ is the $\bP^h$-compensated Poisson measure with intensity with $\nu^h$ as in \eqref{definition_nu^h}.
\end{cor}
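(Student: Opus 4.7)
The plan is to deduce the corollary directly from Theorem \ref{GirsanovHTransform} by combining it with the standard Girsanov theorem for jump diffusions. The essential observation is that Theorem \ref{GirsanovHTransform} has already identified the exact Girsanov density $Z_T$, together with the transformation coefficients $u(t)$ and $\theta(t,z)$ in \eqref{hTransform-u,theta}. Since $\bP^h$ is supported on $\set{\tau^0>T}$ by Proposition \ref{prop:equiv represent of bP^h}, on this $\bP^h$-full-measure event the indicators $\one_{\set{\tau_n^0\geq t\,\textup{ev.}}}$ can be dropped and the coefficients reduce to $u(t)=-\sigma^T\nabla\log h(t,X_t)$ and $\theta(t,z)=1-h(t,X_{t-}+\gamma(t,X_{t-},z))/h(t,X_{t-})$.

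Next, I would invoke a localized version of the Girsanov theorem for jump diffusions (e.g.\ Theorem 1.33 of the \O ksendal--Sulem reference already cited in the proof of Theorem \ref{GirsanovHTransform}). Applied to the stopped process $\set{Z_{t\wedge\tau_n^0}}_{[0,T]}$, which is a true bounded martingale because $h$ is $C^{1,2}$ and bounded below by $1/n$ on the stopped trajectory, this yields that, under $\bP^h$ restricted to $\cF_{t\wedge\tau_n^0}$, the processes
\begin{equation*}
B^h_t:=B_t+\int_0^{t\wedge\tau_n^0}u(s)\,ds \quad\text{and}\quad \tilde N^h(ds,dz):=N(ds,dz)-\nu^h(s,X_{s-};dz)\,ds
\end{equation*}
are, respectively, an $\R^m$-valued standard Brownian motion and the compensated Poisson random measure with intensity $\nu^h$ given by \eqref{definition_nu^h}, up to time $\tau_n^0\wedge T$. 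Since $\bP^h(\tau^0>T)=1$ and $\tau_n^0\uparrow\tau^0$, these properties extend to the entire interval $[0,T]$ under $\bP^h$.

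Finally, I would substitute $B^h$ and $\tilde N^h$ into the original SDE \eqref{ReferenceLevyIto}, using
\begin{equation*}
\sigma(t,X_t)\,dB_t = \sigma(t,X_t)\,dB^h_t + \sigma\sigma^T\nabla\log h(t,X_t)\,dt
\end{equation*}
for the diffusion term and
\begin{equation*}
\int_{|z|\leq 1}\gamma\,\tilde N(dt,dz)=\int_{|z|\leq 1}\gamma\,\tilde N^h(dt,dz)+\int_{|z|\leq 1}\gamma\,\frac{h(t,X_{t-}+\gamma)-h(t,X_{t-})}{h(t,X_{t-})}\nu(dz)\,dt
\end{equation*}
for the small-jump term, while the large-jump integral $\int_{|z|>1}\gamma\,N(dt,dz)$ is unaffected by the change of intensity. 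Collecting the new deterministic drifts gives exactly $b^h(t,X_t)$ as in \eqref{definition_b^h}, which yields \eqref{hTransformSDE}.

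The main technical obstacle is that $\set{Z_t}_{[0,T]}$ is only a local martingale in general (as explicitly noted after \eqref{NegativeLogExpanded}), so one cannot invoke the standard Girsanov theorem globally on $[0,T]$ without an extra integrability assumption. The localization via $\set{\tau_n^0}_{n\geq 1}$ circumvents this, and the eventual full-interval statement is recovered from $\bP^h(\tau^0>T)=1$; importantly, this reasoning does not require the assumption $\bP(\tau^0>T)=r_0$ used in the last part of Theorem \ref{GirsanovHTransform}, so the corollary holds under the general hypothesis.
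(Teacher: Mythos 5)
Your proposal follows essentially the same route as the paper: use Theorem \ref{GirsanovHTransform} to identify $\bP^h$ as a Girsanov transform, drop the indicators $\one_{\set{\tau_n^0\geq t\,\textup{ev.}}}$ on the $\bP^h$-full event $\set{\tau^0>T}$, invoke Girsanov to obtain the $\bP^h$-Brownian motion $B^h$ and compensated Poisson measure $\tilde N^h$, and then regroup terms in \eqref{ReferenceLevyIto}. The paper's proof is terser — it simply says ``by the Girsanov theorem'' after replacing $u,\theta$ by their indicator-free versions — whereas you make the localization explicit, which is a legitimate tightening. Two remarks:
\begin{enumerate}
\item The claim that $\set{Z_{t\wedge\tau_n^0}}_{[0,T]}$ is a \emph{bounded} martingale is incorrect. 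Along the stopped trajectory $h(t\wedge\tau_n^0,X_{t\wedge\tau_n^0})$ is bounded below by $1/n$ (up to a possible downward jump at $\tau_n^0$), but neither the numerator $h(t\wedge\tau_n^0,X_{t\wedge\tau_n^0})$ nor $1/h(0,X_0)$ is bounded above in general, since $h$ is merely $C^{1,2}$ and non-negative. What one actually needs — and what holds — is that $\one_{\set{X_0\in A_0}}Z_{t\wedge\tau_n^0}=\one_{\set{X_0\in A_0}}h(t\wedge\tau_n^0,X_{t\wedge\tau_n^0})/h(0,X_0)$ is a \emph{true} martingale, which follows from optional stopping applied to the martingale of Lemma \ref{lem_martingale1_h(T)/h(0)}. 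This is what justifies the localized Girsanov step; boundedness is neither available nor required.
\item Your observation that the localization argument makes the conclusion independent of the hypothesis $\bP(\tau^0>T)=r_0$ is a worthwhile refinement over what the paper makes explicit. The paper's proof of the corollary reads ``in performing the Girsanov transform'' directly of $\frac{\one_{\set{\tau^0>T}}}{r_0}\bP$, and for this to be a probability-to-probability change of measure one needs precisely $\bP(\tau^0>T)=r_0$, the condition singled out at the end of Theorem \ref{GirsanovHTransform}. To make your localization fully airtight, you should check (via the tower property and optional stopping) that the probability measure $\bQ_n:=Z_{T\wedge\tau_n^0}\frac{\one_{\set{X_0\in A_0}}}{r_0}\bP$ agrees with $\bP^h$ on $\cF_t\cap\set{\tau_n^0>t}$ for each $t$; since $\tau_n^0\uparrow\tau^0>T$ $\bP^h$-a.s., the Girsanov structure established under each $\bQ_n$ then transfers to $\bP^h$ on all of $[0,T]$.
\end{enumerate}
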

\begin{proof}
We have established in the previous theorem that $\bP^h=Z_T\frac{\one_{\set{\tau^0>T}}}{r_0}\bP$. On $\set{\tau^0>T}$, $h(t,X_t)>0$ and $h(t,X_{t-})>0$ for all $t\in[0,T]$.
Thus, in performing the Girsanov transform, we can replace the coefficients $u(t),\theta(t,z)$ 
in \eqref{hTransform-u,theta} by
\begin{equation*}
        \begin{cases}
            u(t,x):=-\sigma^T\nabla\log h(t,x)\text{ for }(t,x)\in\cA_h, \\
            \displaystyle \theta(t,x,z) := 1-\frac{h(t,x+\gamma(t,x,z))}{h(t,x)}\text{ for }(t,x)\in\cA_h\text{ and }z\in\R^\ell,
        \end{cases}   
\end{equation*}
and define $\set{Z_t}_{[0,T]}$ accordingly, where all the stochastic integrals involved are well defined as remarked in the proof above. 

By the Girsanov theorem, we have that $dB^h_t:=u(t,X_t)\, dt + dB_t$ is a $\bP^h$-Brownian motion and $\tilde{N}^h(dt,dz):=N(dt,dz)-(1-\theta(t,X_t,z))\nu(dz)dt$ is the $\bP^h$-compensated Poisson random measure of $N(dt,dz)$. Then, the SDE \eqref{hTransformSDE} simply follows from re-writing and re-grouping the terms in the SDE \eqref{ReferenceLevyIto} to incorporate $B^h_t$ and $\tilde{N}^h(dt,dz)$.
\end{proof}
\section{SBP for Jump Diffusions: the General Theory}\label{Section:RelationSBP-htransform}
In this section, we return to the Schr\"odinger bridge problem (SBP) for jump diffusions and study its solution under the theory of $h$-transforms developed in Section \ref{Section:hTransform}. Recall from Theorem \ref{Theorem:fg-existence} that, given a Markov reference measure $\bR$ and target endpoint distributions $\rho_0,\rho_T$, if $(\f,\g)$ is the solution to the Schr\"odinger system \eqref{Equation:fgSchrodingerSystem}, then the solution to the SBP \eqref{Definition:DynamicSBP} is given by $\hat{\bP}=\f(X_0)\g(X_T)\bR$. We aim at connecting $\hat{\bP}$ with a certain $h$-transform of the underlying path measure.

In the following subsections, we first discuss a particular type of SBPs that are explicitly ``solvable''. Namely, when $\rho_0$ is a point mass, we can in fact explicitly determine $(\f,\g)$ and hence study the property of $\hat\bP$ directly. However, for general $\rho_0,\rho_T$, it is difficult, if not impossible, to derive concrete expressions for $(\f,\g)$. Instead, we will approach $\hat{\bP}$ from an $h$-transform point of view by considering the function $\h(t,x):=\int \g(y)P_{t,T}(x,dy)$ and apply the results developed in Section \ref{Section:hTransform}. Although in general we do not have any regularity condition on $\g$ \textit{a priori} to guarantee that $\h$ is harmonic, we can establish an approximation theory under Assumption \ref{Assumption:Hypo}. In particular, we will prove that the desired SBP solution $\hat{\bP}$ is achieved as the limit, under strong convergence, of a family of path measures all of which are $h$-transforms by harmonic functions. 
\subsection{SBP with Explicit Solution}\label{Section:Poisson-Example}
Let us consider the particular case of SBP when the target initial endpoint distribution is a point mass, meaning $\rho_0 =\delta_{x_0}$ for some $x_0\in\R^n$. In this scenario, the study of the SBP is restricted to a reference measure $\bR$ under which the process has the initial distribution $X_0=x_0$ a.s., because otherwise $\KL{\bQ}{\bR}=\infty$ for any admissible $\bQ$, leading to the SBP having no solution. Under the assumption that $\rho_0=\bR_0=\delta_{x_0}$, the Schr\"odinger system takes the form 
\begin{equation*}
    \begin{cases}
\vspace{0.2cm}\displaystyle \f(x_0)\Esub{\bR}{\g(X_T)} = 1, \\
\displaystyle \g(y)\f(x_0) = \frac{d\rho_T}{d\bR_T}(y)\text{ for $\bR_T$-a.e. $y$}.
\end{cases}
\end{equation*}
Assume without loss of generality that $\f(x_0)=1$ and hence $\bE_{\bR}\brac{\g(X_T)}=1$ and $\g=\frac{d\rho_T}{d\bR_T}$. As a result, the Schr\"odinger bridge is given by \[\hat\bP=\g(X_T)\bR=\frac{d\rho_T}{d\bR_T}(X_T)\bR.\] 

To make it concrete, we will examine a simple example that falls outside the usual literature on diffusion SBPs. We also illustrate the results from Section \ref{Section:hTransform} with this example. \\
\noindent \textit{Example.} Let $\bR$ be the path measure such that $\set{X_t}_{[0,T]}$ is a simple Poisson jump process with constant rate $\lambda>0$ and initial distribution $X_0\equiv0$. This process has a terminal distribution $X_T\sim\mathrm{Poisson}(\lambda T)$. We will solve the (dynamic) SBP \eqref{Definition:DynamicSBP} for $\bR$ with the target endpoint distributions $\rho_0=\delta_0$ and $\rho_T=\mathrm{Poisson}(\mu T)$, where $\mu>0$ is a constant and $\mu\neq\lambda$. Obviously, $\frac{d\rho_T}{d\bR_T}(y)=e^{(\lambda-\mu)T}(\mu/\lambda)^y$ for $y\in\N$, so according to the above, the SBP solution is $\hat\bP=e^{(\lambda-\mu)T}(\mu/\lambda)^{X_T}\bR$. Now we turn to the dynamics of $\hat\bP$.

For the given $\bR$, the SDE \eqref{ReferenceLevyIto} can be written as $dX_t=\lambda dt + \int_{|z|\leq 1} z\tilde{N}(dt,dz)$ with $\tilde N(dt,dz)$ being the compensated simple Poisson random measure with rate $\lambda$\footnote{In fact, the representation above can be reduced to $dX_t=\int_{\R}zN(dt,dz)$ since $\nu=\lambda \delta_1$.}, and the operator \eqref{ReferenceGenerator} simply becomes \begin{equation*}
    \cL f(t,x):=\frac{\partial f}{\partial t}(t,x) +\lambda \pran{f(t,x+1)-f(t,x)}\text{ for }f\in C_c^{1,2}.
\end{equation*}
To link $\hat\bP$ to an $h$-transform, we need the ``candidate'' function $\h$, in this case given by
\begin{equation*}
    \h(t,x)=e^{(\lambda-\mu)T}\pran{\frac{\mu}{\lambda}}^x\sum_{n=0}^\infty e^{-\lambda(T-t)}\frac{[\lambda(T-t)(\mu/\lambda)]^n}{n!}=e^{(\lambda-\mu)t}\pran{\frac{\mu}{\lambda}}^x.
\end{equation*}
As a function on $[0,T]\times\R^+$, $\h$ satisfies the mean-value property, is strictly positive and of class $C^{1,2}$. Furthermore, for every $(t,x)\in[0,T]\times\R^+$, \begin{equation*}
    \cL \h(t,x)=(\lambda-\mu)\h(t,x)+\lambda\pran{\frac{\mu}{\lambda}-1}\h(t,x)=0,
\end{equation*}
which means that $\h$ is harmonic. We observe that $\hat\bP=\frac{\h(T,X_T)}{\h(0,X_0)}\bR$. In other words, $\hat\bP$ is exactly the $h$-transform of $\bR$ by the harmonic function $\h$. We are ready to put the results from Section \ref{Section:hTransform} into practice.

By Theorem \ref{Theorem:MartingaleProblem-Lh}, the operator associated with $\hat\bP$ is given by: for $f\in C_c^{1,2}$,
\[
\cL^\h
f(t,x)=\frac{\partial f}{\partial t}(t,x)+(b^\h f')(t,x)+\int_{|z|\leq 1}\pran{f(t,x+z)-f(t,x)-zf'(x)}\nu^{\h}(dz);\] with $b^{\h}(t,x)=\lambda\frac{\h(t,x+1)}{\h(t,x)}=\mu$ and $\nu^{\h}=\lambda\frac{\h(t,x+1)}{\h(t,x)}\delta_1=\mu\delta_1$. In this case, the above reduces to \[
\cL^\h
f(t,x)=\frac{\partial f}{\partial t}(t,x)+\mu\pran{f(t,x+1)-f(t,x)}.\]
This indicates that $\hat\bP$ is exactly the law of the simple Poisson jump process with rate $\mu$.

We may also put this example under the lens of the Girsanov transformation, where the concerned coefficients become $u(t,x)=0$ and $\theta(t,x,.)=1-\frac{\h(t,x+1)}{\h(t,x)}=1-\frac{\mu}{\lambda}$. By Corollary \ref{Theorem:SDEunderPh}, the canonical process under $\hat\bP$ solves the SDE $dX_t=\mu dt+\int_{|z|\leq 1}z\tilde{N}^{\h}(dt,dz)$ where  \[\tilde{N}^{\h}(dt,dz):=N(dt,dz)-(1-\theta(t,x,z))\nu(dz)dt=N(dt,dz)-\mu\delta_1dt\]  is the compensated Poisson random measure. 
This again confirms that $\hat\bP$ is provided by a simple Poisson jump process with rate $\mu$. 
\subsection{Relation between SBP and $h$-Transform}\label{Subsec:Approximation}
In this subsection, we will give a comprehensive analysis of the connections between the solution to the SBP and the $h$-transform of the reference path measure. Under the setup adopted at the beginning of Section \ref{Section:RelationSBP-htransform}, given the strong Markov reference measure $\bR$, the target endpoint distributions $\rho_0,\rho_T$, and the solution $(\f,\g)$ to the Schr\"odinger system \eqref{Equation:fgSchrodingerSystem}, we define 
\begin{equation}\label{def:frak_h}
\h(t,x):=\int_{\R^n}\g(y)P_{t,T}(x,dy)\text{ for }(t,x)\in[0,T]\times\R^n,
\end{equation} 
where $\set{P_{s,t}(x,dy):0\leq s\leq t\leq T,x\in\R^n}$ is the transition distribution under $\bR$. Note that $\h$ is non-negative (and may take $\infty$ value) and satisfies the mean-value property \eqref{Equation:MeanValueProperty}. Moreover, since $\h(T,x)=\g(x)$ and $\h(0,x)=\bE_{\bR_{0T}}\brac{\g(y)|x}$, it follows from \eqref{Equation:fgSchrodingerSystem} that $\f(x)\h(0,x)=\frac{d\rho_0}{d\bR_0}(x)$ for $\bR_0$-a.e. $x$. Let us state a preparatory lemma.
\begin{lem}\label{lem:from bR to bP}
    Let $\bR, \rho_0,\rho_T, \f,\g,\h,$ be the same as above. Set $\displaystyle S_0:=\set{x\in \R^n:\frac{d\rho_0}{d\bR_0}(x)=0}$ and define
    \begin{equation}\label{eq:def of bP}
        \bP:=\one_{S_0^c}(X_0)\frac{d\rho_0}{d\bR_0}(X_0)\bR. 
    \end{equation}
 Then, $\bP$ is a path measure with initial endpoint (i.e., $t=0$) distribution $\rho_0$, and the  solution to the dynamic SBP \eqref{Definition:DynamicSBP} has the representation: 
\begin{equation}\label{eq:hatbP in terms of bP}
    \hat\bP= \frac{\h(T,X_T)}{\h(0,X_0)}\bP.
\end{equation}
\end{lem}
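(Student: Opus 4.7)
The plan is to verify both assertions by direct computation; the only genuine subtlety is handling the set $S_0$ where $\f$ and $\h(0,\cdot)$ may simultaneously vanish, and the indicator $\one_{S_0^c}$ in \eqref{eq:def of bP} is designed precisely to manage this.

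For the first claim, since the existence of $\hat\bP$ forces $\rho_0 \ll \bR_0$ (so $\tfrac{d\rho_0}{d\bR_0}$ is well defined) and $\rho_0(S_0)=0$ by definition, one obtains
\[
\Esub{\bR}{\one_{S_0^c}(X_0)\tfrac{d\rho_0}{d\bR_0}(X_0)} = \int_{S_0^c}\tfrac{d\rho_0}{d\bR_0}(x)\,\bR_0(dx) = \rho_0(S_0^c) = 1,
\]
which shows $\bP \in \cP(\Omega)$. Repeating this calculation restricted to any Borel $B \subseteq \R^n$ gives $\bP(X_0 \in B) = \rho_0(B \cap S_0^c) = \rho_0(B)$, i.e., $\bP_0 = \rho_0$.

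For the second claim, I would start from $\hat\bP = \f(X_0)\g(X_T)\bR$ (Theorem \ref{Theorem:fg-existence}). The strong Markov property of $\bR$ together with the definition \eqref{def:frak_h} of $\h$ yields $\h(T,x)=\g(x)$ and $\h(0,x)=\Esub{\bR}{\g(X_T)\,|\,X_0=x}$; substituting the latter into the first equation of the Schr\"odinger system \eqref{Equation:fgSchrodingerSystem} gives $\f(x)\h(0,x) = \tfrac{d\rho_0}{d\bR_0}(x)$ for $\bR_0$-a.e.\ $x$. On $S_0^c$ this right-hand side is strictly positive, so $\h(0,x)>0$ there and the formal identity $\f(x) = \h(0,x)^{-1}\tfrac{d\rho_0}{d\bR_0}(x)$ is valid $\bR_0$-a.s.\ on $S_0^c$.

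The remaining step---the main obstacle---is to verify that the portion of $\hat\bP$ sitting on $\set{X_0 \in S_0}$ carries no mass, so that the substitution just noted can be carried out without any division by zero. Since $\f,\g \geq 0$, the Markov property gives
\[
\Esub{\bR}{\one_{S_0}(X_0)\f(X_0)\g(X_T)} = \int_{S_0}\f(x)\h(0,x)\,\bR_0(dx) = \int_{S_0}\tfrac{d\rho_0}{d\bR_0}(x)\,\bR_0(dx) = 0,
\]
so $\one_{S_0}(X_0)\f(X_0)\g(X_T) = 0$ $\bR$-a.s.\ and consequently $\hat\bP = \one_{S_0^c}(X_0)\f(X_0)\g(X_T)\bR$. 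Plugging in $\f(X_0) = \h(0,X_0)^{-1}\tfrac{d\rho_0}{d\bR_0}(X_0)$ and $\g(X_T) = \h(T,X_T)$ (both valid on $S_0^c$) then produces $\hat\bP = \tfrac{\h(T,X_T)}{\h(0,X_0)}\bP$, as required.
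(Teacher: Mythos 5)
Your proposal is correct and takes essentially the same approach as the paper: both arguments hinge on the observation that the product $\f(X_0)\g(X_T)$ carries no mass on $\{X_0\in S_0\}$, verified by the identical integral computation $\int_{S_0}\f(x)\h(0,x)\,\bR_0(dx)=\int_{S_0}\frac{d\rho_0}{d\bR_0}(x)\,\bR_0(dx)=0$ combined with non-negativity. The only cosmetic difference is that the paper routes through the static coupling $\hat\pi$ via Theorem \ref{thm:relation static and dynamic SBP}, while you argue directly at the level of the path measure $\hat\bP=\f(X_0)\g(X_T)\bR$; since $\hat\pi=\hat\bP_{0T}$ these are the same step.
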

\begin{proof}
The statement on $\bP$ follows immediately from the definitions of $\bP$ and $S_0$.
Further, we observe that \[\frac{\h(T,X_T)}{\h(0,X_0)}\one_{S_0^c}(X_0)\frac{d\rho_0}{d\bR_0}(X_0)=\one_{S_0^c}(X_0)\f(X_0)\g(X_T)\text{ $\bR$-a.s.},\] 
Therefore, to establish \eqref{eq:hatbP in terms of bP}, we only need to show that
\[\hat\bP=\one_{S_0^c}(X_0)\f(X_0)\g(X_T)\bR. \]
By Theorem \ref{thm:relation static and dynamic SBP}, it is sufficient to prove that $\hat\pi$, the solution to the static SBP \eqref{Definition:StaticSBP}, can be represented as 
\begin{equation*}
    \hat\pi= \one_{S_0^c}(x)\f(x)\g(y)\bR_{0T}.
\end{equation*} 
By \eqref{Equation:pihat}, we would be done if we could confirm that $\one_{S_0}(x)\f(x)\g(y)\bR_{0T}\equiv 0$. Indeed,
\begin{equation*}
    \int_{S_0}\int_{\R^n} \f(x)\g(y)\bR_{0T}(dxdy)=\int_{S_0}\f(x)\h(0,x)\bR_0(dx)=\int_{S_0}\frac{d\rho_0}{d\bR_0}(x)\bR_0(dx)=0.
\end{equation*}
\end{proof} 
Lemma \ref{lem:from bR to bP} implies that, in solving the SBP \eqref{Definition:DynamicSBP}, by replacing $\bR$ by $\bP$ following \eqref{eq:def of bP}, it is sufficient to treat the case when the reference initial endpoint distribution matches the target one. Further, the representation \eqref{eq:hatbP in terms of bP} of $\hat{\bP}$ clearly suggests that $\hat{\bP}$ should be viewed as an $h$-transform of $\bP$. Therefore, we can transport to $\hat{\bP}$ the theory established in Section \ref{Section:hTransform}. We will restate the relevant results in the theorem below.  
\begin{thm}\label{Theorem:SBridge=frakh-transform}
    Consider the SBP \eqref{Definition:DynamicSBP} with a strong Markov reference measure $\bR$ and target endpoint distributions $\rho_0,\rho_T$. Let $(\f,\g)$ be the solution to the Schr\"odinger system \eqref{Equation:fgSchrodingerSystem}, $\h$ be as in \eqref{def:frak_h} and $\bP$ be as in \eqref{eq:def of bP}. Then, the SBP solution $\hat{\bP}$ is the $h$-transform of $\bP$ by $\h$, i.e., $\hat\bP=\bP^{\h}$ where $\bP^{\h}$ is  given by \eqref{Definition:h-transform,general case with r0}\footnote{In this case, by the definition \eqref{eq:def of bP} of $\bP$, $\h(0,X_0)>0$ a.s. under $\bP$, and hence $r_0=1$ and $\one_{\set{X_0\in A_0}}$ can be omitted in \eqref{Definition:h-transform,general case with r0}.} (or equivalently \eqref{Definition:h-transform,general case with tau0}) with $h=\h$.
    
    Now assume that $\h$ is harmonic. We have the following: 
\begin{enumerate}
    \item If the operator $L$ in \eqref{ReferenceGenerator} satisfies \textup{\ref{Assumption:MartingaleProblem-s,x}} and $\bP$ is the solution to the martingale problem for $(L,\rho_0)$, then $\hat\bP$ is the solution to the martingale problem for $(L^{\h},\rho_0)$, where $L^{\h}$ is given by \eqref{Definition:L^h} with $h=\h$. 
    \item If the SDE \eqref{ReferenceLevyIto} satisfies \textup{ \ref{Assumption:SDEsolution}} and $(\set{X_t}_{[0,T]},\bP)$ is a solution to \eqref{ReferenceLevyIto} with $X_0\sim\rho_0$, then $(\set{X_t}_{[0,T]},\hat\bP)$ is a solution to the SDE: \begin{equation*}
    \begin{aligned}
       \hspace{0.5cm} dX_t=b^{\h}(t,X_{t-})&dt+\sigma(t,X_{t-})dB_t \\
       &+ \int_{|z|\leq 1}\gamma(t,X_{t-},z)\tilde{N}^{\h}(dt,dz)+\int_{|z|>1}\gamma(t,X_{t-},z)N(dt,dz)
    \end{aligned}
    \end{equation*}
    with $X_0\sim\rho_0$, where $b^{\h}$ is as in \eqref{definition_b^h} with $h=\h$, $\tilde{N}^{\h}(dt,dz)=N(dt,dz)-\nu^{\h}(dz)dt$, and $\nu^{\h}$ is given by \eqref{definition_nu^h} with $h=\h$.
\end{enumerate}
\end{thm}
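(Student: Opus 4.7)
The plan is to assemble the representation \eqref{eq:hatbP in terms of bP} from Lemma \ref{lem:from bR to bP} with the $h$-transform machinery of Section \ref{Section:hTransform}, specialized to $h=\h$. Almost no new technical content is needed; the work is in verifying the hypotheses of Theorem \ref{Theorem:MartingaleProblem-Lh} and Corollary \ref{Theorem:SDEunderPh} and checking that the resulting $h$-transform matches $\hat\bP$ as probability measures, not merely up to a normalizing constant.

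The first substantive step is to check that $\h$ satisfies the mean-value property \eqref{Equation:MeanValueProperty}. This follows immediately from \eqref{def:frak_h} and Chapman--Kolmogorov for the transition kernels $\set{P_{s,t}(x,dy)}$ of the strong Markov reference $\bR$, uses no regularity of $\g$, and makes $\bP^{\h}$ in the sense of \eqref{Definition:h-transform,general case with r0} well defined. I would then reconcile the formula $\hat\bP=\frac{\h(T,X_T)}{\h(0,X_0)}\bP$ given by Lemma \ref{lem:from bR to bP} with \eqref{Definition:h-transform,general case with r0}, which carries the additional factor $\frac{\one_{\set{X_0\in A_0}}}{r_0}$ with $A_0=\set{x:\h(0,x)>0}$ and $r_0=\bP(\set{X_0\in A_0})$. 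The first equation of the Schr\"odinger system \eqref{Equation:fgSchrodingerSystem} reads $\f(x)\h(0,x)=\frac{d\rho_0}{d\bR_0}(x)$ for $\bR_0$-a.e.\ $x$, so $S_0^c\subseteq A_0$ up to a $\bR_0$-null set. Since $\bP$ defined by \eqref{eq:def of bP} is supported on $\set{X_0\in S_0^c}$, it follows that $r_0=1$ and $\one_{\set{X_0\in A_0}}=1$ $\bP$-a.s., identifying $\hat\bP$ with $\bP^{\h}$.

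For parts (1) and (2), assuming now that $\h$ is harmonic, I would directly invoke Theorem \ref{Theorem:MartingaleProblem-Lh} under \ref{Assumption:MartingaleProblem-s,x} and Corollary \ref{Theorem:SDEunderPh} under \ref{Assumption:SDEsolution}, applied to the path measure $\bP$. Since $\bP$ differs from $\bR$ only through its initial endpoint distribution, the canonical process under $\bP$ retains the strong Markov property, the generator $L$ (in the martingale problem sense) and the SDE representation \eqref{ReferenceLevyIto}, now with $X_0\sim\rho_0$; the hypotheses of those results are thus inherited from the corresponding assumptions on $\bR$. The initial law of the resulting $\hat\bP$ is $\rho_0^{\h}=\frac{\one_{A_0}}{r_0}\rho_0=\rho_0$ by the previous step, which matches both stated conclusions.

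The main obstacle is exactly the bookkeeping in the reconciliation step: ensuring $r_0=1$ so that $\hat\bP=\bP^{\h}$ as an identity of probability measures rather than only as a proportionality. This is precisely why the definition \eqref{eq:def of bP} excises $S_0$ rather than using $\bR$ directly. Once this is handled, the remainder is a direct application of the Section \ref{Section:hTransform} results, with no genuine difficulty beyond them.
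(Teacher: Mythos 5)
Your proposal is correct and follows the same route the paper intends: the paper presents this theorem without a separate proof (it is announced as a restatement that ``transports'' the Section 3 results), relying on Lemma \ref{lem:from bR to bP} for the identity $\hat\bP=\frac{\h(T,X_T)}{\h(0,X_0)}\bP$ and then citing Theorem \ref{Theorem:MartingaleProblem-Lh} and Corollary \ref{Theorem:SDEunderPh}, while the normalization point ($r_0=1$, $\one_{\{X_0\in A_0\}}=1$ $\bP$-a.s.) is relegated to the footnote. Your write-up supplies exactly the bookkeeping that footnote omits—the Chapman--Kolmogorov check of the mean-value property, the inclusion $S_0^c\subseteq A_0$ up to a $\bR_0$-null set from $\f(x)\h(0,x)=\frac{d\rho_0}{d\bR_0}(x)$, and the observation that $\bP=\int\bP^{0,x}(\cdot)\,\rho_0(dx)$ so the martingale-problem/SDE hypotheses and strong Markov property are inherited from $\bR$—so there is no gap.
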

This theorem covers the ``good case'', where the function $\h$ in \eqref{def:frak_h} is harmonic. However, the harmonic property of $\h$ is not guaranteed in general. We recall that $\h$ is derived from $\g$ and, as one of the pair of solutions to the Schr\"odinger system, $\g$ is only assumed to be non-negative and measurable. For diffusion models, the harmonic property of $\h$ is known to hold under mild regularity conditions on the coefficients $b,\sigma$ \cite{jamison1975markov,dai1991stochastic}. On the other hand, analogous results in the jump-diffusion literature are rather scarce. Consequently, to treat the case where $\h$ is not known to be harmonic, we will invoke Assumption \ref{Assumption:Hypo} and construct a sequence of harmonic functions $\set{h_k}_{k\geq1}$ converging to $\h$ such that the sequence of $h$-transforms $\bP^{h_k}$ converges to $\hat\bP$ in the strong topology. \\

Let us begin by building an approximation to $\g$. First, since $\g$ is non-negative and measurable, there exists a sequence of bounded simple functions $\set{g'_k}_{k\geq 1}$ such that $g'_k$ increasingly converges to $\g$ as $k\to \infty$ and
\begin{equation*}
    g'_k:=\sum_{i=1}^{N^{(k)}}c_i^{(k)}\one_{A_i^{(k)}},
\end{equation*}
where $0 \leq c_i^{(k)} \leq k$ and the sets $A_i^{(k)}$ are measurable and pairwise disjoint for different $i$'s. Fix $k$. By Lusin's theorem, there exists $g''_k\in C_b(\R^n)$ such that         $\bR_T(\set{g'_k \neq g''_k}) \leq  k^{-3}$. We further choose a compact set $D_k\subseteq \R^n$ such that $\bR_T(D_k^c) \leq k^{-3}$ and choose $g_k\in C_c^\infty(\R^n)$ such that $\norm{g_k}_{\text{u}}\leq k$ and $\norm{g_k-g''_k}_{\text{u},D_k} \leq k^{-2}$. Let $f_k$ be the non-negative function on $\R^n$ such that, for every $x\in\R^n$,  $f_k(x):=\pran{\bE_{\bR_{0T}}\brac{g_k(y)|x}}^{-1}$ if the denominator is positive, and $f_k(x)=0$ otherwise.
\begin{lem}\label{lem:f_k,g_k approximation convergence}
    Let $f_k,g_k : k \geq 1$ be defined as above. Then
    \begin{equation*}
        \lim_{k\to\infty}f_k(X_0)g_k(X_T) \frac{d\rho_0}{d\bR_0}(X_0)= \one_{S_0^c}(X_0)\f(X_0)\g(X_T)\,\, \text{ $\bR$-a.s.},
    \end{equation*}
    or equivalently,
    \begin{equation*}
        \lim_{k\to\infty}f_k(x)g_k(y) \frac{d\rho_0}{d\bR_0}(x)= \one_{S_0^c}(x)\f(x)\g(y)\,\, \text{ $\bR_{0T}$-a.s. }
    \end{equation*}
\end{lem}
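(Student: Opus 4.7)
The plan is to reduce the statement to a $\bR_{0T}$-almost-sure convergence in $(x,y)$, then separately establish (i) $g_k(y) \to \g(y)$ pointwise $\bR_T$-a.s.\ and (ii) $f_k(x) \to 1/\h(0,x)$ for $\bR_0$-a.e.\ $x \in S_0^c$, and finally glue the two via the Schr\"odinger identity $\f(x)\h(0,x)=\frac{d\rho_0}{d\bR_0}(x)$, which holds $\bR_0$-a.s.\ from \eqref{Equation:fgSchrodingerSystem}. Since the random variables in the claim depend only on $(X_0,X_T)$, the two formulations in the lemma are equivalent; on $S_0$, both sides are identically $0$ (the left since $\frac{d\rho_0}{d\bR_0}=0$, the right by the indicator), so only $x\in S_0^c$ is at issue, and there $\h(0,x)>0$ and $\f(x)=\frac{d\rho_0}{d\bR_0}(x)/\h(0,x)$.

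First I would prove $g_k\to\g$ $\bR_T$-a.s. On $D_k\cap\{g'_k=g''_k\}$ the construction yields $|g_k-g'_k|=|g_k-g''_k|\le k^{-2}$, and the complement has $\bR_T$-measure at most $2k^{-3}$. Since $\sum_k 2k^{-3}<\infty$, Borel--Cantelli gives $|g_k-g'_k|\le k^{-2}$ eventually $\bR_T$-a.s., and combined with the monotone convergence $g'_k\uparrow\g$ we obtain $g_k\to\g$ $\bR_T$-a.s., hence also $\bR_{0T}$-a.s.\ in the coordinate $y$.

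Next I would show $\int g_k(y)\,P_{0,T}(x,dy)\to\h(0,x)$ for $\bR_0$-a.e.\ $x$, from which $f_k(x)\to 1/\h(0,x)$ on $S_0^c$ is immediate by reciprocating. By the monotone convergence theorem, $\int g'_k(y)\,P_{0,T}(x,dy)\uparrow \h(0,x)$ pointwise in $x$. The key quantitative estimate uses $\|g_k\|_{\mathrm u}\le k$, $\|g'_k\|_{\mathrm u}\le k$, and $\bR_T(\{g_k\neq g'_k\} \text{ or outside the good set})\le 2k^{-3}$:
\begin{equation*}
\int |g_k-g'_k|\,d\bR_T \;\le\; k^{-2}\cdot\bR_T(\R^n) + 2k\cdot 2k^{-3} \;\le\; 5k^{-2}.
\end{equation*}
Summability together with Fubini (to write the integral as $\int\bR_0(dx)\int|g_k-g'_k|(y)\,P_{0,T}(x,dy)$) and Borel--Cantelli yields $\int|g_k-g'_k|(y)\,P_{0,T}(x,dy)\to 0$ for $\bR_0$-a.e.\ $x$, so combining with the monotone limit we get $\int g_k(y)\,P_{0,T}(x,dy)\to\h(0,x)$ $\bR_0$-a.s.

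Finally I would combine the two convergences: for $\bR_{0T}$-a.e.\ $(x,y)$ with $x\in S_0^c$,
\begin{equation*}
f_k(x)\,g_k(y)\,\frac{d\rho_0}{d\bR_0}(x) \;\longrightarrow\; \frac{1}{\h(0,x)}\cdot\g(y)\cdot\frac{d\rho_0}{d\bR_0}(x) \;=\; \f(x)\g(y),
\end{equation*}
using the Schr\"odinger identity on the last step; on $S_0$ both sides vanish. The main obstacle is the second step: because $g_k$ does not converge monotonically or uniformly to $\g$, one cannot pass the limit inside the conditional expectation by dominated or monotone convergence directly, and must instead exploit the summable $L^1(\bR_T)$-bound on $g_k-g'_k$ to transfer control to $P_{0,T}(x,\cdot)$ for $\bR_0$-a.e.\ $x$ via Fubini and Borel--Cantelli. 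The remaining steps are bookkeeping around the identity $\f\h(0,\cdot)=\frac{d\rho_0}{d\bR_0}$.
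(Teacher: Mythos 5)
Your proof is correct and follows essentially the same route as the paper's: establish $g_k\to\g$ $\bR_T$-a.s.\ via Borel--Cantelli, show the integrated quantities $\int g_k\,P_{0,T}(x,dy)$ converge to $\h(0,x)$ for $\bR_0$-a.e.\ $x$ by combining monotone convergence for $g'_k$ with a summable $L^1(\bR_T)$ bound on $g_k-g'_k$, and then close via the Schr\"odinger identity $\f\,\h(0,\cdot)=\frac{d\rho_0}{d\bR_0}$. The only cosmetic differences are that you bound $|g_k-g'_k|$ directly on the good set $D_k\cap\{g'_k=g''_k\}$ rather than splitting through $g''_k$ with the triangle inequality, and you phrase the disintegration as Fubini over $\bR_0(dx)\,P_{0,T}(x,dy)$ rather than as conditional expectations --- both of which land on the same summability-implies-a.s.-convergence step.
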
 
\begin{proof}
    First note that, by the Borel-Cantelli lemma, $\bR_T(\set{g'_k \neq g''_k} \text{ i.o.})=0$ and therefore
    \begin{equation}\label{eq:g''_k and g'_k limit}
        \lim_{k\to\infty}g''_k=\lim_{k\to\infty}g'_k=\g\,\, \bR_{T}\text{-a.s.}
    \end{equation}  Further,
    \begin{equation*}
    \bE_{\bR_{T}}\brac{\abs{g_k-g''_k}} \leq 2k\bR_T(D_k^c)+\frac{1}{k^2}\bR_T(D_k)\leq \frac{3}{k^2},
    \end{equation*}
    and thus for any $\eps>0$, \[\bR_{T}\pran{\set{\abs{g_k-g''_k}>\eps}}\leq \frac{3}{\eps k^2}.\] Applying the Borel-Cantelli lemma again, we have that
    \begin{equation}\label{eq:g_k and g''_k limit}
        \lim_{k\to\infty}\abs{g_k-g''_k}=0\,\,\bR_{T}\text{-a.s.}
    \end{equation}
   Combining \eqref{eq:g''_k and g'_k limit} and \eqref{eq:g_k and g''_k limit}, we conclude that $\lim_{k\to\infty}g_k=\g\,\,\bR_{T}\text{-a.s.}$, which is equivalent to \[\lim_{k\to\infty}g_k(y)=\g(y)\,\,\bR_{0T}\text{-a.s.}\] 
    It remains to show that 
    \begin{equation*}
        \lim_{k\to\infty}f_k(x)\frac{d\rho_0}{d\bR_0}(x)=\one_{S_0^c}(x)\f(x)\,\,\bR_{0T}\text{-a.s.}
    \end{equation*} Recall that since $\f$ solves the Schr\"odinger system,
    \begin{equation}
     \label{eq:frak_f solve Schrodinger sys}   \f(x)\bE_{\bR_{0T}}\brac{\g(y)|x}=\frac{d\rho_0}{d\bR_0}(x) \,\,\bR_{0T}\text{-a.s.},
    \end{equation} 
    and therefore from the definition of $S_0$ and $f_k$, it suffices to show that  
    \begin{equation}\label{eq:lim of conditional g_k}
        \lim_{k\to\infty}\bE_{\bR_{0T}}\brac{g_k(y)|x} = \bE_{\bR_{0T}}\brac{\g(y)|x} \,\,\bR_{0T}\text{-a.s.}
    \end{equation}
    By the monotone convergence of conditional expectation, \[\lim_{k\to\infty}\bE_{\bR_{0T}}\brac{g'_k(y)|x} = \bE_{\bR_{0T}}\brac{\g(y)|x}.\] Note that for every $k\geq 1$, 
    \begin{align*}
        \bE_{\bR_{0T}}\brac{\abs{g_k(y)-g'_k(y)}}
            &\leq \bE_{\bR_{T}}\brac{\abs{g_k-g''_k}}+\bE_{\bR_{T}}\brac{\abs{g'_k-g''_k}}\\
            &\leq \frac{3}{k^2} + k\bR_T\pran{\set{g'_k\neq g''_k}}\\
            & = \frac{4}{k^2},
    \end{align*}
    which implies that  \[\lim_{k\to\infty}\abs{\bE_{\bR_{0T}}\brac{g_k(y)|x} - \bE_{\bR_{0T}}\brac{g'_k(y)|x}}=0 \;\;\bR_{0T}\text{-a.s.}\]
    From here \eqref{eq:lim of conditional g_k} follows immediately.
\end{proof}
We can now express the SBP solution $\hat\bP$ as the limit of a sequence of $h$-transforms by harmonic functions. 
\begin{thm}\label{thm:P^hk strong conv to hatP}
    Suppose that Assumption \textup{\ref{Assumption:Hypo}} holds for $\set{P_{s,t}(x,\cdot):0\leq s\leq t\leq T,x\in\R^n}$ the transition distribution  under the strong Markov reference measure $\bR$. Let $\bP,\hat\bP$ be the same as in Lemma \ref{lem:from bR to bP} and $g_k,f_k$, $k\geq1$ be as in Lemma \ref{lem:f_k,g_k approximation convergence}. For every $k\geq 1$, we define 
\begin{equation}\label{eq:def of h_k}
    h_k(t,x):=\int_{\R^n}g_k(y)P_{t,T}(x,dy)\text{ for }(t,x)\in[0,T]\times\R^n,
\end{equation}
and 
\begin{equation}\label{eq:def of bP^hk}
    \bP^{h_k}:=\frac{\one_{\set{X_0\in A_k}}}{r_k}\frac{h_k(T,X_T)}{h_k(0,X_0)}\bP,
\end{equation}
where $A_k:=\set{x\in\R^n:h_k(0,x)>0}$ and $r_k:=\rho_0(A_k)$. 
Then $\set{\bP^{h_k}}_{k \geq 1}$ is a sequence of $h$-transforms by harmonic functions, and  $\bP^{h_k}$ converges to $\hat\bP$ in the strong topology in the sense that \[\lim_{k\to\infty}\sup_{A\in\cF}|\bP^{h_k}(A)-\hat{\bP}(A)|=0.\]
\end{thm}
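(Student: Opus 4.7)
The strategy splits naturally into two parts: verifying the harmonic structure of the $h_k$'s, and then deducing strong convergence from $L^1$-convergence of the Radon-Nikodym densities with respect to $\bR$. For the first part, because $g_k\in C_c^\infty(\R^n)$ by construction, Assumption \ref{Assumption:Hypo} applied with $g=g_k$ immediately gives that each $h_k$ is of class $C^{1,2}$ on $[0,T]\times\R^n$ and satisfies $(\tfrac{\partial}{\partial t}+L)h_k=0$ on $(0,T)\times\R^n$, so $h_k$ is harmonic in the sense of Definition \ref{Definition:HarmonicFunction}; the mean-value property \eqref{Equation:MeanValueProperty} is inherited from the Chapman-Kolmogorov identity for $\{P_{s,t}(x,\cdot)\}$. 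Hence each $\bP^{h_k}$ is a genuine $h$-transform of $\bP$ by a harmonic function, to which Theorem \ref{Theorem:SBridge=frakh-transform} applies.

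For the second part, I first unpack the densities. Using $\bP=\one_{S_0^c}(X_0)\tfrac{d\rho_0}{d\bR_0}(X_0)\bR$ from \eqref{eq:def of bP}, together with $h_k(T,\cdot)=g_k$ and the identity $h_k(0,x)=\E_{\bR_{0T}}[g_k(y)|x]$ for $\bR_0$-a.e.\ $x$ (so that $\one_{A_k}(x)/h_k(0,x)=f_k(x)$), a direct computation gives
\begin{equation*}
    \frac{d\bP^{h_k}}{d\bR} \;=\; \frac{1}{r_k}\,f_k(X_0)\,g_k(X_T)\,\frac{d\rho_0}{d\bR_0}(X_0) \quad \bR\text{-a.s.},
\end{equation*}
while Lemma \ref{lem:from bR to bP} together with \eqref{Equation:bP=fgbR} yields
\begin{equation*}
    \frac{d\hat\bP}{d\bR} \;=\; \one_{S_0^c}(X_0)\,\f(X_0)\,\g(X_T).
\end{equation*}
Lemma \ref{lem:f_k,g_k approximation convergence} supplies the $\bR$-a.s.\ convergence of the \emph{unnormalized} numerators, so the remaining task is to show $r_k\to 1$. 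Since $\bP^{h_k}$ and $\hat\bP$ are probability measures, integrating the two densities over $\bR$ yields $r_k$ and $1$ respectively; applying Fatou's lemma to the a.s.\ convergence from Lemma \ref{lem:f_k,g_k approximation convergence} gives $\liminf_{k}r_k\geq 1$, and since trivially $r_k=\rho_0(A_k)\leq 1$, I conclude $r_k\to 1$, which in turn forces $d\bP^{h_k}/d\bR\to d\hat\bP/d\bR$ $\bR$-a.s.

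To finish, I invoke Scheff\'e's lemma: the two densities are non-negative, integrate to $1$ against $\bR$, and converge $\bR$-a.s., so their difference tends to $0$ in $L^1(\bR)$. This is precisely strong (total variation) convergence, since
\begin{equation*}
    \sup_{A\in\cF}|\bP^{h_k}(A)-\hat\bP(A)| \;=\; \tfrac{1}{2}\int_\Omega\left|\frac{d\bP^{h_k}}{d\bR}-\frac{d\hat\bP}{d\bR}\right|d\bR \;\xrightarrow[k\to\infty]{}\;0.
\end{equation*}
The main technical subtlety lies in controlling the normalizing constants: the $g_k$'s are not in general monotone in $k$, so I cannot use monotone convergence; however, Fatou together with the elementary bound $r_k\leq 1$ is exactly what is needed, because mass can only be asymptotically lost, never gained, under weak regularity of the approximations.
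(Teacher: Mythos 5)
Your proof is correct and follows essentially the same route as the paper: harmonicity from Assumption (A3), the identity $d\bP^{h_k}/d\bR = r_k^{-1}f_k(X_0)g_k(X_T)\tfrac{d\rho_0}{d\bR_0}(X_0)$, a.s.\ convergence from Lemma \ref{lem:f_k,g_k approximation convergence}, the normalization argument $r_k\to 1$, and Scheff\'e. The only (immaterial) difference is that you apply Fatou directly to the non-negative integrand $f_k(X_0)g_k(X_T)\tfrac{d\rho_0}{d\bR_0}(X_0)$, whereas the paper applies Fatou to the indicators $\one_{A_k}$ after first noting $\lim_k h_k(0,x)>0$ for $\rho_0$-a.e.\ $x$ via \eqref{eq:lim of conditional g_k}; both yield $\liminf_k r_k\geq 1$ and, with $r_k\leq 1$, the required $r_k\to 1$.
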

\begin{proof}
Fix $k \geq 1$. Since $g_k \in C_c^\infty$ and \ref{Assumption:Hypo} holds, $h_k$ is harmonic. Note that $h_k(T,y)=g_k(y)$ for $y\in\R^n$ and $h_k(0,x)=\bE_{\bR_{0T}}\brac{g_k(y)|x}=(f_k(x))^{-1}$ for $x\in A_k$. By combining (\ref{eq:def of bP^hk}) with (\ref{eq:def of bP}), we can rewrite $\bP^{h_k}$ as 
\[
\bP^{h_k}=\frac{\one_{\set{X_0\in A_k}}}{r_k}\frac{h_k(T,X_T)}{h_k(0,X_0)}\frac{d\rho_0}{d\bR_0}(X_0)\bR=\frac{1}{r_k}f_k(X_0)g_k(X_T)\frac{d\rho_0}{d\bR_0}(X_0)\bR.
\]

Recall that $\hat{\bP}=\one_{S_0^c}(X_0)\f(X_0)\g(X_T)\bR$. Therefore, to prove the conclusion of the theorem, it is sufficient to show that $\bP^{h_k}$ is a path measure for each $k\geq1$ and 
\begin{equation}\label{eq:fkgk conv in L1}
\frac{1}{r_k}f_k(X_0)g_k(X_T)\frac{d\rho_0}{d\bR_0}(X_0)\xrightarrow{k\to\infty}\one_{S_0^c}(X_0)\f(X_0)\g(X_T)\text{ in }L^1(\bR).
\end{equation} 
First, we apply Lemma \ref{lem:f_k,g_k approximation convergence} to get 
\begin{equation}     \label{eq:fkgk conv a.s.}      \lim_{k\to\infty}f_k(X_0)g_k(X_T)\frac{d\rho_0}{d\bR_0}(X_0)=\one_{S_0^c}(X_0)\f(X_0)\g(X_T)\,\,\bR\text{-a.s.}
\end{equation}
Next, we derive that 
\begin{equation*}
\begin{aligned}   \bE_{\bR}&\brac{f_k(X_0)g_k(X_T)\frac{d\rho_0}{d\bR_0}(X_0)} \\
&=\bE_{\bR}\brac{f_k(X_0)\bE_{\bR}\brac{g_k(X_T)|X_0}\frac{d\rho_0}{d\bR_0}(X_0)}\\
&=\bE_{\bR}\brac{\one_{\set{X_0\in A_k}}\frac{d\rho_0}{d\bR_0}(X_0)}\text{ since $f_k=(\bE_{\bR_{0T}}\brac{g_k(y)|\cdot})^{-1}$ on $A_k$} \\
&=\rho_0(A_k) = r_k,
\end{aligned}
\end{equation*} which confirms that $\bP^{h_k}$ is a path measure.
Furthermore, by (\ref{eq:frak_f solve Schrodinger sys}) and (\ref{eq:lim of conditional g_k}), for $\bR_0$-a.e. (and hence $\rho_0$-a.e.) $x\in \R^n$, if $\frac{d\rho_0}{d\bR_0}(x)>0$, then $\bE_{\bR_{0T}}\brac{\g(y)|x}>0$ and thus \[\lim_{k\to\infty}h_k(0,x)=\lim_{k\to\infty}\bE_{\bR_{0T}}\brac{g_k(y)|x}>0.\] Therefore, using Fatou's lemma, we have that
\begin{equation*}
    1=\rho_0\pran{\set{\frac{d\rho_0}{d\bR_0}>0}}\leq \rho_0\pran{\set{\lim_{k\to\infty}h_k(0,x)>0}}
        \leq \liminf_{k\to\infty}\rho_0(A_k)=r_k.
\end{equation*}
We conclude that as $k\to\infty$, 
\begin{equation} \label{eq:fkgk conv in expectation}  
  \bE_{\bR}\brac{f_k(X_0)g_k(X_T)\frac{d\rho_0}{d\bR_0}(X_0)}=r_k \xrightarrow{k\to\infty}  1 = \bE_{\bR}\brac{\f(X_0)\g(X_T)}.
\end{equation}
Finally, combining (\ref{eq:fkgk conv a.s.}) and (\ref{eq:fkgk conv in expectation}), the desired $L^1(\bR)$ convergence relation (\ref{eq:fkgk conv in L1}) follows from Scheff\'e's lemma. 
\end{proof}
We close this section with a result stating that the above strong convergence does not only occur at the endpoints $t=0,T$, but also in a ``dynamic'' manner in terms of the approximation of $\h$.
\begin{thm}
Under the assumptions of  Theorem \ref{thm:P^hk strong conv to hatP}, if $\set{h_k}_{k \geq 1}$ is the sequence of harmonic functions defined in \eqref{eq:def of h_k}, then for every $t\in[0,T]$,  
    \begin{equation*}
        \lim_{k\to\infty}h_k(t,X_t) = \h(t,X_t)\,\,\bP\text{-a.s.,}
    \end{equation*}
and 
\begin{equation*}
   \frac{\one_{\set{X_0\in A_k}}}{r_k}\frac{h_k(t,X_t)}{h_k(0,X_0)}\xrightarrow{k\to\infty}\frac{\h(t,X_t)}{\h(0,X_0)}\text{ in $L^1(\bP)$.}
\end{equation*}
\end{thm}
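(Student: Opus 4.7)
The plan is to establish the almost-sure convergence of $h_k(t,X_t)$ via a monotone approximation comparison, and then to deduce the $L^1(\bP)$ convergence of the ratio from the strong convergence $\bP^{h_k}\to\hat\bP$ already obtained in Theorem \ref{thm:P^hk strong conv to hatP} using the martingale contraction property.

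For the first claim, I would fix $t\in[0,T]$ and introduce the auxiliary function
\[
h'_k(t,x):=\int_{\R^n}g'_k(y)P_{t,T}(x,dy),
\]
where $\set{g'_k}_{k\geq1}$ is the monotone sequence of simple functions increasing to $\g$ from the proof of Lemma \ref{lem:f_k,g_k approximation convergence}. By the Markov property of $\set{X_t}_{[0,T]}$ under $\bR$, $h_k(t,X_t)=\bE_\bR[g_k(X_T)\mid\cF_t]$ and $h'_k(t,X_t)=\bE_\bR[g'_k(X_T)\mid\cF_t]$ $\bR$-a.s., and by the conditional monotone convergence theorem, $h'_k(t,X_t)\uparrow\h(t,X_t)$ $\bR$-a.s. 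The bound $\bE_\bR[|g_k(X_T)-g'_k(X_T)|]\leq 4/k^2$ established in the proof of Lemma \ref{lem:f_k,g_k approximation convergence}, combined with the tower property, yields $\bE_\bR[|h_k(t,X_t)-h'_k(t,X_t)|]\leq 4/k^2$. Markov's inequality together with the Borel-Cantelli lemma then forces $h_k(t,X_t)-h'_k(t,X_t)\to 0$ $\bR$-a.s., so $h_k(t,X_t)\to\h(t,X_t)$ $\bR$-a.s. Since $\bP\ll\bR$ by the definition \eqref{eq:def of bP}, the convergence also holds $\bP$-a.s.

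For the second claim, I would set
\[
M_k(t):=\frac{\one_{\set{X_0\in A_k}}}{r_k}\frac{h_k(t,X_t)}{h_k(0,X_0)}\quad\text{and}\quad M(t):=\frac{\h(t,X_t)}{\h(0,X_0)}.
\]
By Lemma \ref{lem_martingale1_h(T)/h(0)}, both $\set{M_k(t)}_{[0,T]}$ and $\set{M(t)}_{[0,T]}$ are $\set{\cF_t}$-martingales under $\bP$, whence $M_k(t)=\bE_\bP[M_k(T)\mid\cF_t]$ and $M(t)=\bE_\bP[M(T)\mid\cF_t]$, with $M_k(T)=d\bP^{h_k}/d\bP$ and $M(T)=d\hat\bP/d\bP$. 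Since $\bP^{h_k}$ and $\hat\bP$ are both absolutely continuous with respect to $\bP$, the strong convergence $\bP^{h_k}\to\hat\bP$ provided by Theorem \ref{thm:P^hk strong conv to hatP} is equivalent to $\bE_\bP[|M_k(T)-M(T)|]\to 0$. The $L^1$-contraction property of conditional expectation then gives
\[
\bE_\bP[|M_k(t)-M(t)|]\leq\bE_\bP[|M_k(T)-M(T)|]\xrightarrow{k\to\infty}0,
\]
which is the desired $L^1(\bP)$ convergence.

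The main technical point is that the smoothed sequence $\set{g_k}$ is not itself monotone, so one cannot directly invoke conditional monotone convergence on $h_k$; the detour through $\set{g'_k}$ together with summable $L^1$-control of $g_k-g'_k$ to enable Borel-Cantelli is the key maneuver. Everything else is a straightforward consequence of the martingale structure and the strong convergence already in hand.
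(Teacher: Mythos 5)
Your proof is correct and follows essentially the same approach as the paper's: for the first claim, both use the auxiliary functions $h'_k(t,x):=\int g'_k(y)P_{t,T}(x,dy)$ built from the monotone sequence $\set{g'_k}$, the $L^1(\bR_T)$ bound $\bE_{\bR_T}[|g_k-g'_k|]\leq 4/k^2$ to control $\bE_\bR[|h_k(t,X_t)-h'_k(t,X_t)|]$, and Borel--Cantelli to get $\bR$-a.s.\ (hence $\bP$-a.s.) convergence; for the second claim, both recognize the ratios as conditional expectations of the time-$T$ Radon--Nikodym derivatives and invoke the $L^1$-contraction of conditional expectation together with the $L^1(\bP)$ convergence of the densities already established in Theorem~\ref{thm:P^hk strong conv to hatP}.
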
 
\begin{proof}
Let $g_k,g'_k$, $k\geq1$, be the same as introduced before Lemma \ref{lem:f_k,g_k approximation convergence}. For every $k\geq 1$, and every $(t,x)\in[0,T]\times\R^n$, define \[h'_k(t,x):=\int_{\R^n}g'_k(y)P_{t,T}(x,dy).\] Clearly, $\set{h'_k}_{k\geq1}$ is an increasing sequence that converges to $\h$. Moreover,
\begin{align*}
    \bE_{\bR}\brac{\abs{h_k(t,X_t)-h'_k(t,X_t)}} &= \int_{\R^n}\int_{\R^n}\abs{h_k(t,x)-h'_k(t,x)}P_{0,t}(w,dx)\bR_0(dw)\\
    &\leq \int_{\R^n}\int_{\R^n}\int_{\R^n}\abs{g_k(y)-g'_k(y)}P_{t,T}(x,dy)P_{0,t}(w,dx)\bR_0(dw)\\
    &=\bE_{\bR_T}\brac{\abs{g_k-g'_k}}\\
    &\leq \frac{4}{k^2}\text{ by the same argument as in the proof of Lemma \ref{lem:f_k,g_k approximation convergence}}.
\end{align*}
It is sufficient for us to conclude that  $\lim_{k\to\infty}\abs{h_k(t,X_t)-h'_k(t,X_t)}=0$ $\bR$-a.s., and thus
\begin{equation*}
    \lim_{k\to\infty}h_k(t,X_t) = \h(t,X_t)\,\,\bR\text{-a.s., and hence $\bP$-a.s.}
\end{equation*}
To prove the second statement, we observe that
\begin{equation*}
    \bE_{\bP}\brac{\frac{\one_{\set{X_0\in A_k}}}{r_k}\frac{h_k(T,X_T)}{h_k(0,X_0)}\bigg|\cF_t}=\frac{\one_{\set{X_0\in A_k}}}{r_k}\frac{h_k(t,X_t)}{h_k(0,X_0)}
\end{equation*}
and
\begin{equation*}
    \bE_{\bP}\brac{\frac{\h(T,X_T)}{\h(0,X_0)}\bigg|\cF_t}=\frac{\h(t,X_t)}{\h(0,X_0)}.
\end{equation*}
It is easy to see that the fact \eqref{eq:fkgk conv in L1} established in Theorem \ref{thm:P^hk strong conv to hatP} is equivalent to
\[\frac{\one_{\set{X_0\in A_k}}}{r_k}\frac{h_k(T,X_T)}{h_k(0,X_0)}\xrightarrow{k\to\infty}\frac{\h(T,X_T)}{\h(0,X_0)}\text{ in }L^1(\bP).\]
Therefore, the respective conditional expectations must also satisfy the same $L^1(\bP)$ convergence relation.
\end{proof}
\section{SBP for Jump Diffusions: when Density Exists}\label{Section:smooth-coefficients}
So far we have been studying the SBP for general jump diffusions under Assumptions \ref{Assumption:MartingaleProblem-s,x} (or \ref{Assumption:SDEsolution}) and \ref{Assumption:Hypo}. These assumptions, especially  \ref{Assumption:Hypo}, are better understood for jump diffusions that admit densities. A canonical scenario in which a density function may exist is when the jump diffusion has sufficiently regular coefficients. Such models have been well studied in the context of \textit{stochastic flows} with a fast growing literature \cite{ishikawa2006malliavin,cass2009smooth,kunita2011analysis,kunita2013nondegenerate,kunita2019stochastic}. 

In this section, we first revisit the Schr\"odinger system for a jump-diffusion SBP under the general assumption that the transition density exists. Then, we adopt the setting of Kunita \cite{kunita2019stochastic} to study the SBP for certain jump-diffusion stochastic flows, where, under the presence of smooth\footnote{To be precise, the density functions are $C^1$ in temporal variables and $C^\infty$ in spatial variables.} density functions, we obtain results analogous to the diffusion case on the Schr\"odinger system solution $(\varphi,\hat\varphi)$ as reviewed in Section \ref{Section:DiffusionReview}. By doing so, we generalize in various degrees the known results about the diffusion SBP to the jump-diffusion setting. In addition, based on an approximation method, we extend our study to the SBP for jump diffusions with stable-like jump components. For this type of models, the density is known to exist in many situations, including when the coefficients are less regular \cite{czq2016heatkernel,chen2016heat,chen2020heat,jin2017heat}. 
\subsection{Schr\"odinger System for Jump Diffusions\label{subsection:varphi_hat varphi via f_g}} 
As in the previous section, we continue working under the assumption that the conclusion of Theorem \ref{Theorem:fg-existence} holds and $(\f,\g)$ is a pair of non-negative, measurable functions that solve the Schr\"odinger system (\ref{Equation:fgSchrodingerSystem}). As in the diffusion case, we assume that the reference endpoint distributions  $\bR_0,\bR_T$ and the target ones $\rho_0,\rho_T$ all admit probability density functions, which we will write as $\bR_0(x)dx$, etc, and the transition distribution under $\bR$ admits a transition density function everywhere and at all time, i.e., for every $0\leq s<t\leq T$ and $x\in \R^n$,  $P_{s,t}(x,dy)=p(s,x,t,y)dy$. We will re-derive the Schr\"odinger system for the jump-diffusion SBP. Note that for this derivation, we do not need the transition density function to be strictly positive.

Recall that the solution to the SBP takes the form $\hat{\bP}=\f(X_0)\g(X_T)\bR$. Expressed in terms of the density functions, this fact yields that 
\begin{equation}\label{eq:fg integrable under R_0T}
\int_{\R^n}\int_{\R^n} \f(x)\g(y)p(0,x,T,y)\bR_0(x)dxdy=1.
\end{equation}
Further, the Schr\"odinger system \eqref{Equation:fgSchrodingerSystem} can be rewritten as
\begin{equation}\label{eq:Schrodinger system under density}
   \begin{cases}
    \vspace{0.2cm}\displaystyle \f(x)\bR_0(x)\pran{\int_{\R^n}\g(y)p(0,x,T,y)dy}=\rho_0(x) & \text{ for $\bR_0$-a.e. $x$},\\
    \displaystyle \g(y)\pran{\int_{\R^n}\f(x)\bR_0(x)p(0,x,T,y)dx} = \rho_T(y)& \text{ for $\bR_T$-a.e. $y$}.
    \end{cases} 
\end{equation}
Therefore, if we further define $\varphi_T:=\g$, $\hat\varphi_0:=\f\cdot\bR_0$, and 
\begin{equation*}
  \begin{cases}
            \varphi_0(x):=\int_{\R^n} p(0,x,T,y)\varphi_T(y)dy\text{ for }x\in\R^n,\\ \hat\varphi_T(y):=\int_{\R^n} p(0,x,T,y)\hat\varphi_0(x)dx\text{ for }y\in\R^n,
  \end{cases}
\end{equation*}
then \eqref{eq:Schrodinger system under density} can be interpreted as the 4-tuple of functions $(\varphi_0,\varphi_T,\hat\varphi_0,\hat\varphi_T)$ being a solution to the Schr\"odinger system as in  \eqref{phiphihat}.

It is important to note that, although (\ref{eq:fg integrable under R_0T}) guarantees that the product $\f\g$ is integrable with respect to $p(0,x,T,y)\bR_0(x)=\bR_{0T}(x,y)$, it does not imply the respective integrability of $\f,\g$ with respect to $\bR_0,\bR_T$. As a result, the functions $\varphi_0,\varphi_T,\hat\varphi_0,\hat\varphi_T$ may take the value of infinity \cite{leonard2014some}. On the other hand, the first equation in (\ref{eq:Schrodinger system under density}) implies that $0<\hat\varphi_0(x) \varphi_0(x)<\infty$ for $\rho_0$-a.e. $x\in\R^n$, which means that $0<\hat\varphi_0<\infty$ and $0<\varphi_0<\infty $ a.s. under $\rho_0$. Similarly, we also have $0<\hat\varphi_T<\infty$ and $0<\varphi_T<\infty $ a.s. under $\rho_T$.\\

We now ready present a derivation of the probability density function of the marginal distribution of $\hat{\bP}$. This is a known result that relies only on the absolute continuity of the initial distribution $\rho_0$ (with respect to the Lebesgue measure) and the existence of the transition density $p(s,x,t,y)$; we include this derivation for completeness. For every $(t,x)\in[0,T]\times\R^n$ and  $B\in \cB(\R^n)$, we have
\begin{align*}
    \hat{\bP}(X_t(\cdot)\in B)&=\bE_{\bR}\brac{\f(X_0)\g(X_T)\one_B(X_t)}\\
    &=\int_{\R^n}\f(w)\pran{\int_B \int_{\R^n} \g(y)P_{t,T}(x,dy)P_{0,t}(w,dx)}\bR_0(dw)\\ 
    &=\int_B \int_{\R^n}\int_{\R^n}\f(w)\g(y)p(t,x,T,y)\bR_0(w) p(0,w,t,x)dydwdx\\
    &=\int_B \int_{\R^n}\g(y)p(t,x,T,y)dy\int_{\R^n}\f(w)p(0,w,t,x)\bR_0(w)dwdx,
\end{align*}
showing that the marginal distribution at time $t$, denoted by $\hat\bP_t$, admits a probability density function given by 
\begin{equation}\label{Equation:SBPdensity-fg}
    \hat\bP_t(x):=\pran{\int_{\R^n}\g(y)p(t,x,T,y)dy}\pran{\int_{S_0^c}\f(w)p(0,w,t,x)\bR_0(w)dw}.
\end{equation}
If $\varphi,\hat\varphi$ are two functions on $[0,T]\times\R^n$ given by, for $(t,x)\in[0,T]\times\R^n$,
\begin{equation}\label{Equation:phi-definition}
    \varphi(t,x):=\int_{\R^n}\g(y)p(t,x,T,y)dy
\end{equation}
and
\begin{equation}\label{Equation:phihat-definition}
    \hat\varphi(t,x):=\int_{\R^n}\f(w)p(0,w,t,x)\bR_0(w)dw,
\end{equation}
then (\ref{Equation:SBPdensity-fg}) can be rewritten as $\hat\bP_t(x)=\varphi(t,x)\hat\varphi(t,x)$. Further, we observe that the boundary values $(\varphi(0,\cdot),\varphi(T,\cdot),\hat\varphi(0,\cdot),\hat\varphi(T,\cdot))$ coincide with $(\varphi_0,\varphi_T,\hat\varphi_0,\hat\varphi_T)$ introduced above.

Again, we point out that (\ref{Equation:SBPdensity-fg}) only guarantees the finiteness of the product $(\varphi\hat{\varphi})(t,x)$ for every $t\in[0,T]$ and Lebesgue-a.e. $x\in\R^n$, but $\varphi(t,x)$ and $\hat\varphi(t,x)$ alone may be infinite. However, under certain circumstances, we can obtain the finiteness of $\varphi,\hat\varphi$ individually. For example, if $\f,\g$ are bounded, then it follows immediately from (\ref{Equation:phi-definition}) and (\ref{Equation:phihat-definition}) that $\varphi$ is bounded and $\hat\varphi$ is Lebesgue-a.e. finite. Another special case is when the transition density function is \textit{positive}. Indeed, if $p(s,x,t,y)>0$ for every $0\leq s< t\leq T$ and Lebesgue-a.e. $x,y\in\R^n$, then  (\ref{Equation:phi-definition}) and (\ref{Equation:phihat-definition}) imply that, for every $0\leq t\leq T$ and Lebesgue-a.e. $x\in \R^n$, $\varphi(t,x)>0$ and $\hat\varphi(t,x)>0$, and since their product is finite, it is necessary that $0<\varphi(t,x)<\infty$ and $0<\hat\varphi(t,x)<\infty$.
\subsection{Dynamics of Schr\"odinger Bridge for Jump Diffusions}
In this subsection, we will derive the partial integro-differential equation (PIDE) formulation of the Schr\"odinger system for jump diffusions analogous to \eqref{Equation:ItoSchrodingerSystem} in the diffusion case. In order to express the dynamics of the SBP solution in terms of PIDEs, we need $\varphi$ (and $\hat\varphi$) to possess a certain level of regularity. As the regularity results are rather limited for general jump diffusions, we will restrict ourselves to certain jump diffusions whose regularity theory is better understood and we will give a careful study of the SBP associated with these jump diffusions.

To be specific, we will  follow the framework developed in \cite{kunita2019stochastic} (Chapters 4-6). Upon requiring that the $L^2(\R^n)$-adjoint operator $L^*$ exists, we will verify that the functions $\varphi,\hat\varphi$ defined in \eqref{Equation:phi-definition}, \eqref{Equation:phihat-definition} are sufficiently regular and solve the corresponding PIDEs. We will also show that in this case, the product $\varphi\hat\varphi$ satisfies $(\frac{\partial}{\partial t}+L^{\hat\bP*})(\varphi\hat\varphi)=0$, where $L^{\hat\bP*}$ is the adjoint operator of $L^{\hat\bP}$, the operator associated with the SBP solution $\hat\bP$. This is already suggested by the result from the previous subsection that $\varphi\hat\varphi$ is the density of the marginal distribution of $\hat\bP$.\\

We will start with a brief review of the theory on the adjoint operator $L^*$ developed in \cite{kunita2019stochastic}. Let $L$ be the jump-diffusion operator as in (\ref{ReferenceGenerator}).  For every $(t,z)\in\R^+\times\R^\ell$, define the \textit{jump map} $\phi_{t,z} : \R^n \to \R^n$ to be
\begin{equation*}
    \phi_{t,z}(x):=\gamma(t,x,z)+x.
\end{equation*}
We will strengthen Assumptions \ref{Assumptions B} to  the following set of conditions that impose higher regularity on the coefficients of $L$:
\paragraph{\textbf{Assumptions (C)}}\customlabel{Assumptions-(C)}{\textbf{(C)}}
\begin{enumerate}
    \item $b,\sigma\in C^{1,\infty}_b(\R^+\times\R^n)$ with $\sigma\sigma^T$ being non-negative definite everywhere.
    \item $\gamma\in C^{1,\infty,2}_b(\R^+\times\R^n\times\R^\ell)$ with $\gamma(\cdot,\cdot,0)\equiv0$.
    \item for every $(t,z)\in\R^+\times\R^\ell$, $\phi_{t,z}$ is a $C^\infty$-diffeomorphism\footnote{That is, $\phi_{t,z}:\R^n\to\R^n $ is bijective and $\phi_{t,z},\phi_{t,z}^{-1}$ are both $C^\infty$ maps.} and the function $(t,x,z)\mapsto\lambda(t,x,z):=\phi_{t,z}^{-1}(x)-x$ is in  $C^{1,\infty,2}_b(\R^+\times\R^n\times\R^\ell)$.
\end{enumerate}   
It is known that the regularity conditions above lead to the existence and the uniqueness of the solution to the SDE (\ref{ReferenceLevyIto}) (see, e.g., Theorem 3.3.1 in \cite{kunita2019stochastic}), which means that \ref{Assumption:SDEsolution} is satisfied in this case. We will show in a moment that \ref{Assumption:Hypo} also holds under these conditions. Let us first state the following adaptation of Proposition 4.6.1 in \cite{kunita2019stochastic}.
\begin{thm}\label{Theorem:DualOperator-L*}
    Assume $L$ is a jump-diffusion operator given by \eqref{ReferenceGenerator} with coefficients satisfying the Assumptions \textup{\ref{Assumptions-(C)}}. Then, the $L^2([0,T]\times\R^n)$-adjoint operator $L^*: C^{\infty}_0 \to C^{\infty}_b$ is well defined, and for every $f\in C_c^\infty([0,T]\times\R^n)$ and $(t,x)\in[0,T]\times\R^n$,
\begin{equation}\label{JumpDiffusionAdjoint}
\begin{aligned}
     L^*f(t,x)&:= -\nabla \cdot (bf) (t,x)+\frac{1}{2} \sum_{i,j}\frac{\partial^2[(\sigma\sigma^T)_{ij}f]}{\partial x_i \partial x_j}(t,x) \\
    &\;+\int_{\R^\ell} \left[\det \nabla\phi_{t,z}^{-1}(x)f(t,\phi_{t,z}^{-1}(x))-f(t,x) +\one_{|z|\leq 1}\nabla\cdot (\gamma f)(t,x,z) \right]\nu(dz),
\end{aligned}
\end{equation}
where $\det \nabla\phi_{t,z}^{-1}$ is the Jacobian determinant of $\phi_{t,z}^{-1}$. 
\end{thm}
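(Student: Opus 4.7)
The strategy is to characterize $L^*$ via the $L^2$-identity $\int (Lg)(t,x)\,f(t,x)\,dx = \int g(t,x)\,(L^*f)(t,x)\,dx$ holding for test functions $g \in C_c^\infty$; I would compute the left-hand side, read off the expression acting on $f$, verify that it matches \eqref{JumpDiffusionAdjoint}, and then separately show that this expression belongs to $C^\infty_b$. Since $L$ splits into a local (drift plus diffusion) part and a non-local jump part, I would treat the two contributions independently.

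For the local part, integration by parts---legitimate thanks to the $C^{1,\infty}_b$-regularity of $b,\sigma$ in (C1) and the compact support of $g$---produces $-\nabla\cdot(bf)$ and $\frac{1}{2}\sum_{ij}\frac{\partial^2}{\partial x_i \partial x_j}[(\sigma\sigma^T)_{ij} f]$. The decisive step is the jump part: by the diffeomorphism assumption (C3), the change of variables $y = \phi_{t,z}(x)$ converts $\int g(\phi_{t,z}(x))\,f(x)\,dx$ into $\int g(y)\,f(\phi_{t,z}^{-1}(y))\,\det\nabla\phi_{t,z}^{-1}(y)\,dy$ (the absolute value can be omitted because $\det\nabla\phi_{t,z}^{-1}$ is continuous in $z$ with value $1$ at $z=0$ and $\phi_{t,z}$ is a global diffeomorphism for each fixed $z$). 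Combined with the elementary integration by parts $-\int (\gamma \cdot \nabla g)\,f\,dx = \int g\,\nabla\cdot(\gamma f)\,dx$ on the compensator contribution, and after swapping the order of integration in $x$ and $z$, one obtains exactly \eqref{JumpDiffusionAdjoint}.

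Two technical points then remain: justifying the Fubini exchange and verifying $L^*f \in C^\infty_b$. The Fubini step requires a $\nu$-integrable dominant for the integrand in $z$, uniformly in $x$ on the support of $g$. For $|z|>1$ this is immediate from boundedness of $f$ and of the coefficients appearing in (C). On $|z|\leq 1$, the integrand would be only $O(|z|)$ term-by-term, but the compensator is precisely designed to cancel the linear-in-$z$ contribution: Taylor expansion at $z=0$, using $\phi_{t,0}=\mathrm{id}$, $\lambda(t,x,0)=0$, and $\det\nabla\phi_{t,0}^{-1}=1$, gives $\phi_{t,z}^{-1}(x) = x - \gamma(t,x,z) + O(|z|^2)$ and $\det\nabla\phi_{t,z}^{-1}(x) = 1 - \nabla\cdot\gamma(t,x,z) + O(|z|^2)$; substituting these into the integrand shows that the linear terms combine to cancel $\nabla\cdot(\gamma f)$, leaving an $O(|z|^2)$ remainder that is $\nu$-integrable by the standing hypothesis on $\nu$. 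Finally, to conclude $L^*f \in C^\infty_b$, smoothness in $x$ is obtained by differentiating under the $\nu$-integral, justified by the same $O(|z|^2)$ bound applied to successive $x$-derivatives of the integrand---these remain uniformly controlled thanks to the $C^\infty_b$-regularity in $x$ of $b,\sigma,\gamma,\phi_{t,z}^{-1}$ and $\det\nabla\phi_{t,z}^{-1}$ guaranteed by Assumptions (C)---and boundedness follows from the same estimates together with the compact support of $f$. I expect the main obstacle to be precisely the bookkeeping around this Taylor cancellation at $z=0$: ensuring that every $x$-derivative of the remainder remains $O(|z|^2)$ with bounds uniform in $x$ and $t$, since a residual $O(|z|)$ term would break the $\nu$-integrability and prevent the interchange of differentiation and integration.
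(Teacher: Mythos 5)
Your proposal is correct, and it takes a genuinely different route from the paper. The paper does not re-derive the adjoint identity from scratch; instead it cites Kunita's Proposition 4.6.1 directly and notes that the only modification needed is to drop the ``weak drift'' hypothesis on $\nu$, which changes the form of the compensator in $L^*$. The paper's proof therefore consists only of the Taylor-expansion bookkeeping around $z=0$ (the $O(|z|^2)$ cancellation) needed to show that the $\nu$-integral in \eqref{JumpDiffusionAdjoint} converges, presented in the case $n=\ell=1$. You, by contrast, give a self-contained derivation: integration by parts for the local part, the substitution $y=\phi_{t,z}(x)$ for the jump part (correctly noting that the Jacobian determinant is positive by continuity from $\det\nabla\phi_{t,0}^{-1}=1$ and global invertibility), integration by parts for the compensator term, and then Fubini plus differentiation under the integral sign justified by the same cancellation at $z=0$. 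Your Taylor formulas $\phi_{t,z}^{-1}(x)=x-\gamma(t,x,z)+O(|z|^2)$ and $\det\nabla\phi_{t,z}^{-1}(x)=1-\nabla\cdot\gamma(t,x,z)+O(|z|^2)$ carry the correct signs and generalize the paper's one-dimensional computation. What your approach buys is transparency: it makes explicit exactly where the diffeomorphism hypothesis (C3) is used (the change of variables) and where the integrability condition on $\nu$ is used (the linear-in-$z$ cancellation), rather than delegating those points to the reference. What the paper's approach buys is brevity, at the cost of leaving the core adjoint identity as a pointer to Kunita. The one step you flag as the ``main obstacle'' --- uniformity of the $O(|z|^2)$ remainder after taking $x$-derivatives --- is indeed the crux, and the $C^{1,\infty,2}_b$ regularity of $\gamma$ and $\lambda$ in Assumptions (C) is precisely what makes the remainders uniformly controlled, so your instinct there is sound.
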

\begin{proof} This theorem is essentially the same result as Proposition 4.6.1 in \cite{kunita2019stochastic} except that the latter requires the L\'evy measure $\nu$ to have a ``weak drift'' in the sense that $\lim_{\epsilon\to 0}\int_{\epsilon\leq |z|\leq 1}z\nu(dz)$ exists. Upon dropping the weak drift assumption, we obtain (\ref{JumpDiffusionAdjoint}) as the expression for $L^*$, which differs from the counterpart in \cite{kunita2019stochastic}. So, to prove this theorem, we only need to verify that the integral with respect to $\nu$ in (\ref{JumpDiffusionAdjoint}) is well defined, as the rest of Theorem \ref{Theorem:DualOperator-L*} follows directly from Proposition 4.6.1 in \cite{kunita2019stochastic}. 

For the simplicity of notation, we will only consider the case when $n=\ell=1$, $f$ does not depend on $t$, and we use $\cdot'$ (and $\cdot''$) to indicate taking the derivative of the concerned function in $x$. Under the current assumptions, it is easy to see that, for all $z\in\R$, $\lambda(t,x,z)$, $\gamma(t,x,z)$, $\gamma'(t,x,z)$ and $\gamma''(t,x,z)$ are all bounded by a constant multiple of $|z|\wedge 1$ with the constant being uniform in $(t,x)$ on any compact set. Thus,
\[ \pran{\phi_{t,z}^{-1}}'(x)=1-\gamma'(t,\phi_{t,z}^{-1}(x),z)+O(z^2\wedge 1)=1-\gamma'(t,x,z)+O(z^2\wedge 1),
\] 
where we used the fact that 
\[
\gamma'(t,\phi_{t,z}^{-1}(x),z)-\gamma'(t,x,z)=\gamma''(t,x,z)\lambda(t,x,z)+O(|z|^3\wedge 1).
\]
This observation, combined with the assumption that $\phi_{t,z}^{-1}$ is a diffeomorphism, implies that $\pran{\phi_{t,z}^{-1}}'(x)>0$ for all $z$. Similarly, we have
\[
\lambda(t,x,z)=\phi_{t,z}^{-1}(x)-x=\gamma(t,\phi_{t,z}^{-1}(x),z)=\gamma(t,x,z)+O(z^2\wedge 1),
\]
and hence 
\[f(\phi_{t,z}^{-1}(x))=f(x)+f'(x)\lambda(t,x,z)+O(z^2\wedge1)=f(x)+f'(x)\gamma(t,x,z)+O(z^2\wedge1).
\]
Therefore, for $|z|>1$, $\pran{\phi_{t,z}^{-1}}'(x)f(\phi_{t,z}^{-1}(x))-f(x)$ is bounded by a constant uniformly in $(t,x)$ on any compact set; for $|z|\leq1$,
\[
\pran{\phi_{t,z}^{-1}}'(x)f(\phi_{t,z}^{-1}(x))-f(x)-\gamma'(t,x,z)f(x)-\gamma(t,x,z)f'(x)=O(z^2).
\]
This is sufficient to guarantee that the integral with respect to $\nu$ in (\ref{JumpDiffusionAdjoint}) is well defined.
\end{proof}
\noindent \textbf{Remark.} It is clear from the derivation above that, in the jump-diffusion setting, the adjoint operator statement in Theorem \ref{Theorem:DualOperator-L*} relies critically on the assumption that $\phi_{t,z}$ is a diffeomorphism. A simple and classical case in which this assumption is satisfied is when $\phi_{t,z}$ is linear in $x$, meaning that $\gamma(x,t,z) = \gamma (t,z)$ does not depend on $x$. Indeed, in this case, the jump map $x\mapsto\phi_{t,z}(x)$ is just a translation by a constant, and thus immediately a $C^\infty$-diffeomorphism. 
Moreover, the adjoint operator reduces to
\begin{multline*}
    L^*f(t,x)= -\nabla \cdot (bf) (t,x)+\frac{1}{2} \sum_{i,j}\frac{\partial^2[(\sigma\sigma^T)_{ij}f]}{\partial x_i \partial x_j}(t,x) \\
    +\int_{\R^\ell} \brac{f(x-\gamma(t,z))-f(x) +\one_{|z|\leq 1}\nabla\cdot (\gamma f) (t,x,z)}\nu(dz).
\end{multline*}\\

As we wish to derive a system of equations analogous to \eqref{Equation:ItoSchrodingerSystem}, we require $\varphi,\hat\varphi$ to possess sufficient regularity, and this regularity may be inherited from the transition density function of the jump-diffusion process. Therefore, we turn our attention to the study of the transition density function.
We continue to proceed under the framework of \cite{kunita2019stochastic} and introduce some additional notions involving the jump component of the operator $L$. Let $\set{A_0(r)}_{0<r<1}$ be a family of \textit{star-shaped neighborhoods}\footnote{A collection of sets such that for every $0<r<1$, $A_0(r)\subseteq B_0(r)$ and $\bigcup_{r'<r}A_0(r')=A_0(r)$, where $B_0(r)$ denotes the ball of radius $r$ centered at the origin.} of the origin. We say that the L\'evy measure $\nu$ satisfies the \textit{order condition} with respect to $\set{A_0(r)}_{0<r<1}$ if for some $0<\alpha<2$, there exists a constant $c_\alpha> 0$ such that
\begin{equation*}
\int_{A_0(r)}|z|^2\nu(dz) \geq c_\alpha \,r^\alpha \text{ for all }0<r<1.
\end{equation*}
We also define the \textit{correlation matrix} $\Gamma_r$, $0<r<1$, to be the matrix with entries
\[
\Gamma^{i,j}_r=\frac{\int_{A_0(r)}z_iz_j\nu(dz)}{\int_{A_0(r)}|z|^2\nu(dz)},\,\,i,j=1,...,n,
\]
and set $\Gamma_0:=\lim_{r\downarrow 0}\Gamma_r$, assuming the limit exists. Finally, for $(t,x)\in[0,T]\times\R^n$, let $K(t,x)$ be the \textit{tangent vector field} of the jump map $\gamma$ at the origin, i.e., the columns of $K(t,x)$ are given by
\[
K_m(t,x)=\frac{\partial}{\partial z^m}\gamma(t,x,z)\bigg|_{z=0} ,\,\, m=1,...,\ell.
\]
\begin{thm}\cite[Theorem 6.5.2 and Theorem 6.6.1]{kunita2019stochastic}\label{Theorem:DensityKunita}
    Let $L$ be a jump-diffusion operator given by \eqref{ReferenceGenerator} with coefficients satisfying Assumptions \textup{\ref{Assumptions-(C)}} and $\set{X_t}_{[0,T]}$ be the solution to the SDE \eqref{ReferenceLevyIto} associated with $L$. Assume that there exists a family of star-shaped neighborhoods $\set{A_0(r)}_{0<r<1}$ with respect to which $\nu$ satisfies the order condition. Let $\Gamma_0$ and $K(t,x)$ be as above. Further suppose that $\sigma(t,x)\sigma^T(t,x)+K(t,x)\Gamma_0K^T(t,x)$ is positive definite for every $(t,x)\in[0,T]\times\R^n$.
    
    Then, $\set{X_t}_{[0,T]}$ admits a transition density function $p(s,x,t,y)$ for $0\leq s< t\leq T$ and $x,y\in \R^n$. Moreover,
    \begin{enumerate}
        \item for every $0\leq s<t\leq T$, $(x,y)\in\R^n\times\R^n\mapsto p(s,x,t,y)$ is a $C^{\infty,\infty}$-function, and for every multi-index $\mathbf{i}$ and every $x\in\R^n$, $y\mapsto\partial_x^{\mathbf{i}}p(s,x,t,y)$ is rapidly decreasing\footnote{That is, for any integer $k$ and any multi-index $\mathbf{j}$, $\abs{\partial_y^{\mathbf{j}}\partial_x^{\mathbf{i}}p(s,x,t,y)}(1+|y|)^k$ converges to $0$ as $|y|\to\infty$.};
        \item for every $(t,y)\in[0,T]\times\R^n$, $(s,x)\in (0,t)\times\R^n\mapsto p_{t,y}(s,x):=p(s,x,t,y)$ is a $C^{1,\infty}$-function that satisfies $\cL p_{t,y}(s,x)=0$, where $\cL:=\frac{\partial}{\partial s}+L$;
        \item for every $(s,x)\in[0,T]\times\R^n$,  $(t,y)\in(s,T)\times\R^n\mapsto p^{s,x}(t,y):=p(s,x,t,y)$ is a $C^{1,\infty}$-function that satisfies $\cL^*p^{s,x}(t,y)=0$, where $\cL^*:=-\frac{\partial}{\partial t}+L^*$ is the $L^2([0,T]\times\R^n)$-adjoint of $\cL$.
    \end{enumerate}
\end{thm}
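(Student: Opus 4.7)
The plan is to follow the stochastic calculus of variations (Malliavin calculus) framework adapted to jump diffusions, as developed by Bismut, Bichteler--Gravereaux--Jacod, Picard, and in the form used by Kunita. Existence and smoothness of the transition density for $X_t$ is derived from a non-degenerate Malliavin covariance matrix via integration by parts, and the Kolmogorov equations then follow from semigroup theory plus duality.

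\textbf{Step 1: Malliavin differentiation on the Wiener--Poisson space.} First, I would set up a stochastic calculus of variations with two sources of perturbation: Cameron--Martin perturbations of the Brownian motion $\{B_t\}$, and perturbations of the jump times and sizes governed by $N(dt,dz)$. The latter uses the family of jump maps $\phi_{t,z}$; the key input is that under Assumptions \ref{Assumptions-(C)}, each $\phi_{t,z}$ is a $C^\infty$-diffeomorphism with derivatives bounded uniformly on compacts, so variations can be pushed through. Thanks to the $C^{1,\infty}_b$ regularity of $b,\sigma$ and the $C^{1,\infty,2}_b$ regularity of $\gamma$, the stochastic flow $x\mapsto X_t$ is differentiable in all orders, and its derivative flow has moments of all orders.

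\textbf{Step 2: Non-degeneracy of the Malliavin covariance matrix.} The Malliavin covariance matrix $\Sigma_t$ of $X_t$ at the starting point has, heuristically, two contributions: one of the form $\int_0^t J_{t,s}\sigma\sigma^T(s,X_s)J_{t,s}^T\,ds$ from the Brownian part, and one of the form $\int_0^t \int_{\R^\ell} J_{t,s}(\partial_z\gamma)(s,X_{s-},z)(\partial_z\gamma)^T J_{t,s}^T\,N(ds,dz)$ from the jump part, where $J_{t,s}$ is the derivative flow. Localizing around $z=0$ using the order condition $\int_{A_0(r)}|z|^2\nu(dz)\geq c_\alpha r^\alpha$ and the definition of the correlation matrix $\Gamma_r$, the small-jump contribution, after a time-change truncation, is comparable to $K(s,X_s)\Gamma_0 K^T(s,X_s)$. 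Thus the uniform positive definiteness of $\sigma\sigma^T+K\Gamma_0K^T$ translates, after the standard Picard-type estimates, into $L^p$-bounds $\E[\|\Sigma_t^{-1}\|^p]<\infty$ for every $p\geq 1$.

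\textbf{Step 3: Integration by parts, density, and decay.} With the invertibility and $L^p$ bounds on $\Sigma_t^{-1}$ in hand, the Bismut--Bichteler--Gravereaux--Jacod integration by parts formula yields, for every multi-index $\mathbf{j}$, a random variable $H_\mathbf{j}$ with $\E[\|H_\mathbf{j}\|^p]<\infty$ such that $\E[(\partial^\mathbf{j}f)(X_t)]=\E[f(X_t)H_\mathbf{j}]$ for $f\in C_c^\infty(\R^n)$. By the standard Malliavin criterion this gives the existence of $p(0,x,t,\cdot)\in C^\infty(\R^n)$; iterating in both variables (using the inverse flow to differentiate in $x$) gives joint smoothness in $(x,y)$. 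Rapid decay in $y$ follows by combining the integration by parts weights with moment estimates $\E[|X_t|^k]<\infty$ for all $k$.

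\textbf{Step 4: Kolmogorov equations.} Writing the transition semigroup $P_{s,t}f(x):=\E^{s,x}[f(X_t)]$, Dynkin's formula gives $(\partial/\partial s+L)P_{s,t}f=0$ for $f\in C_c^\infty$; once smoothness of $p$ is established, differentiating under the integral transfers this PDE to $p_{t,y}$, yielding $\cL p_{t,y}=0$. For the forward equation, one tests against $f\in C_c^\infty((s,T)\times\R^n)$ and integrates by parts in space to pass $L$ to $L^*$, whose well-definedness on smooth rapidly decreasing functions is exactly Theorem \ref{Theorem:DualOperator-L*}; the rapid-decay conclusion of Step~3 is what justifies the boundary terms vanishing.

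\textbf{Main obstacle.} The hardest step by far is Step~2: extracting a true non-degeneracy estimate for $\Sigma_t^{-1}$ from the hybrid (Brownian plus pure-jump) covariance. In the purely diffusive case this is classical H\"ormander theory; with jumps, the difficulty is that $N(dt,dz)$ is discrete in time, so the jump contribution to $\Sigma_t$ lives on a random countable set, and one must localize to small jumps (governed by $\nu$ near $0$) to mimic a Brownian-like quadratic variation. The order condition on $\nu$ together with the convergence of $\Gamma_r$ to $\Gamma_0$ is precisely what makes this localization quantitative, but controlling the error terms uniformly enough to deduce $L^p$-integrability of $\Sigma_t^{-1}$ requires a delicate stopping-time argument of Picard type, and this is where the bulk of the technical work in \cite{kunita2019stochastic} resides.
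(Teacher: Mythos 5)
The paper does not prove this theorem: it is stated verbatim as a citation of Theorems~6.5.2 and~6.6.1 in Kunita's monograph \cite{kunita2019stochastic} (with the adjoint operator spelled out via Theorem~\ref{Theorem:DualOperator-L*}), and no proof appears in the present text. Your outline is a faithful high-level reconstruction of the Malliavin-calculus-on-Wiener--Poisson-space route that the cited source takes --- smoothness of the stochastic flow under Assumptions~\ref{Assumptions-(C)}, a Malliavin covariance decomposed into a Brownian part and a small-jump part quantified by the order condition and $\Gamma_0$, non-degeneracy from positive definiteness of $\sigma\sigma^T+K\Gamma_0K^T$, integration by parts yielding a smooth rapidly-decreasing density, and the Kolmogorov backward/forward equations by duality --- so the approach matches the cited reference, and there is nothing in the paper itself to compare against.
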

In the above setting, we will say that the solution $\set{X_t}_{[0,T]}$ is a \textit{stochastic flow} generated by the SDE \eqref{ReferenceLevyIto}. An immediate consequence of Theorem \ref{Theorem:DensityKunita} is that \ref{Assumption:Hypo} holds in the current setting, which means that the results on the SBP established in Section \ref{Section:RelationSBP-htransform} are applicable. In other words, for a jump-diffusion operator $L$ whose coefficients satisfy Assumptions \ref{Assumptions-(C)}, the solution $\hat{\bP}$ to the SBP \eqref{Definition:DynamicSBP} associated with $L$ can always be interpreted as an $h$-transform of a reference path measure $\bP$ as $\hat{\bP}=\frac{\h(T,X_T)}{\h(0,X_0)}\bP
$ where 
\[
\h(t,x):=\int_{\R^n}\g(y)p(t,x,T,y)dy\text{ for }(t,x)\in[0,T]\times\R^n.
\]Moreover, $\hat{\bP}$ can always be achieved as the limit under strong convergence of a sequence of $h$-transforms of $\bP$ by harmonic functions.

As one can see, $\h$ coincides with $\varphi$ defined in \eqref{Equation:phi-definition}. Therefore, Theorem \ref{Theorem:DensityKunita} allows us to use flow-based approaches to study the SBP and to describe the dynamics of $\hat{\bP}$ in the setting with sufficient regularity. Indeed, the integro-differential operators $L,L^*$, as in \eqref{ReferenceGenerator} and \eqref{JumpDiffusionAdjoint}, correspond to the backward and forward equations respectively, and they yield the following system of PIDEs, which is a generalization of \eqref{Equation:ItoSchrodingerSystem}. 
\begin{thm}\label{THeorem:varphi_hatvarphi solution to Schrodinger system}
    Assume that the conditions of Theorem \ref{Theorem:DensityKunita} are satisfied, and suppose that the Schr\"odinger system \eqref{Equation:fgSchrodingerSystem} admits solutions $\f,\g$ that are non-negative, measurable and bounded functions with compact support. Then the functions $\varphi,\hat\varphi$ given by \eqref{Equation:phi-definition}, \eqref{Equation:phihat-definition} are of class $C^{1,\infty}$ and the pair $(\varphi,\hat\varphi)$ solves the system of PIDEs
\begin{equation}\label{Equation:LevySchrodingerSystem}
\begin{dcases}
\frac{\partial \varphi}{\partial t}(t,x)+L\varphi(t,x) =0\;\text{ for }(t,x)\in(0,T)\times\R^n,\\
\frac{\partial \hat\varphi}{\partial t}(t,x)-L^*\hat\varphi(t,x)=0\;\text{ for }(t,x)\in (0,T)\times\R^n,\\
\rho_0(x)=\varphi(0,x)\hat{\varphi}(0,x)\;\text{ for }x\in \R^n,\\
\rho_T(x)=\varphi(T,x)\hat{\varphi}(T,x)\;\text{ for }x\in \R^n.
\end{dcases}
\end{equation}
\end{thm}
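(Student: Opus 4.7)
The statement comprises three claims: the boundary conditions at $t=0$ and $t=T$, the $C^{1,\infty}$ regularity of $\varphi$ and $\hat\varphi$, and the two PIDEs. The boundary conditions were already derived in Section \ref{subsection:varphi_hat varphi via f_g} as a direct consequence of the Schr\"odinger system \eqref{eq:Schrodinger system under density}, so the work reduces to establishing the regularity and the PIDEs.

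For the regularity, my plan is to exploit the compact supports of $\g$ and $\f$ to restrict the integrals in \eqref{Equation:phi-definition} and \eqref{Equation:phihat-definition} to fixed compact sets, and then differentiate under the integral. Parts (2) and (3) of Theorem \ref{Theorem:DensityKunita} provide, pointwise in the ``fixed'' endpoint variable, the $C^{1,\infty}$ regularity in the ``free'' variable $(t,x)$ required for the integrands of $\varphi$ and $\hat\varphi$ respectively; combined with the $C^{\infty,\infty}$ joint spatial regularity from part (1), one obtains local boundedness of $\partial_t p$ and $\partial_x^{\mathbf{i}} p$ jointly in all variables on compact sets, so dominated convergence legitimizes differentiation inside the integral, yielding $\varphi,\hat\varphi\in C^{1,\infty}$.

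Once regularity is secured, I would apply $\cL$ to $\varphi$ and $\cL^*$ to $\hat\varphi$ and push both operators inside their respective integrals. The local (drift and diffusion) terms commute with the outer integral by Fubini and dominated convergence. The non-local jump component requires more care: to justify swapping $\int_{\R^n}\g(y)\,dy$ with $\int_{\R^\ell}\nu(dz)$, a Taylor expansion of $p$ in its spatial argument shows that, for $(t,x)$ in a compact subset of $(0,T)\times\R^n$ and $y$ in the compact support of $\g$, the bracketed integrand
\[
p\bigl(t,x+\gamma(t,x,z),T,y\bigr) - p(t,x,T,y) - \one_{|z|\leq 1}\gamma(t,x,z)\cdot\nabla_x p(t,x,T,y)
\]
is bounded by a constant multiple of $|z|^2\wedge 1$, which is $\nu$-integrable; combined with boundedness of $\g$, Fubini applies. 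The Kolmogorov backward equation $\cL p_{T,y}(t,x)=0$ from Theorem \ref{Theorem:DensityKunita}(2) then yields $\cL\varphi=0$, i.e., the first PIDE. The forward PIDE $\cL^*\hat\varphi=0$ follows by an analogous computation using the explicit form of $L^*$ from \eqref{JumpDiffusionAdjoint} and part (3) of Theorem \ref{Theorem:DensityKunita}.

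The main obstacle will be the Fubini exchange in the $L^*$ computation, since the non-local term of $L^*$ involves the inverse jump map $\phi_{t,z}^{-1}$ together with the Jacobian determinant $\det\nabla\phi_{t,z}^{-1}$, and therefore does not admit as clean a Taylor cancellation as the corresponding term of $L$. Fortunately, the same estimates developed in the proof of Theorem \ref{Theorem:DualOperator-L*}---notably $|\lambda(t,x,z)|\lesssim 1\wedge|z|$ and $\det\nabla\phi_{t,z}^{-1}=1-\nabla\cdot\gamma(t,x,z)+O(|z|^2\wedge 1)$ under Assumptions \ref{Assumptions-(C)}---show that the integrand arising from $L^*\hat\varphi$ is again $O(|z|^2\wedge 1)$ on $\{|z|\leq 1\}$ and bounded on $\{|z|>1\}$, uniformly for $w$ in the compact support of $\f$, which gives the required exchange and completes the derivation of the second PIDE.
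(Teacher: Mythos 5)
The paper omits the proof of this theorem, stating only that the claims follow in a straightforward way from the hypotheses and Theorem~\ref{Theorem:DensityKunita}; your proposal supplies precisely those omitted steps (compact support of $\f,\g$ to localize the integrals, differentiation under the integral plus Fubini justified by $O(|z|^2\wedge 1)$ estimates, and parts (2) and (3) of Theorem~\ref{Theorem:DensityKunita} to obtain $\cL\varphi=0$ and $\cL^*\hat\varphi=0$). This matches the argument the paper has in mind, so the proposal is correct and takes the same route.
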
  
\noindent We omit the proof of this theorem, as all the statements follow from the hypothesis and Theorem \ref{Theorem:DensityKunita} in a straightforward way. 

Recall that in Section \ref{subsection:varphi_hat varphi via f_g} we have already derived the product-shaped density of the marginal distribution of $\hat{\bP}$ and that is $\hat\bP_t(x)=\varphi(t,x)\hat\varphi(t,x)$ for $(t,x)\in[0,T]\times\R^n$. In the situation when $\varphi,\hat\varphi$ are sufficiently regular, we can prove this expression of $\hat\bP_t(x)$ in an alternative way by showing that $\cL^{{\hat\bP}*}(\varphi\hat\varphi)=0$ where $\cL^{\hat\bP}$ is the operator associated with $\hat\bP$ and $\cL^{{\hat\bP}*}$ is the adjoint operator of $\cL^{\hat\bP}$. In the diffusion case, this is often done by expressing the coefficients of $\cL^{{\hat\bP}*}$ explicitly and verifying that $\varphi\hat\varphi$ solves the concerned equation directly. Here, we will take a different (and shorter) route of proof by taking advantage of the fact that $\cL^{\hat\bP}$ falls into the category of operators studied in Lemma \ref{Theorem:ProductRule}.    
\begin{thm}
   Let $(\varphi,\hat\varphi)$ be the same as in Theorem \ref{THeorem:varphi_hatvarphi solution to Schrodinger system}. Then, $\varphi$ is harmonic and satisfies the mean-value property \eqref{Equation:MeanValueProperty}. Define $\cA_\varphi$ and $\cL^\varphi$ as in \eqref{definition_A_h} and \eqref{Definition:L^h} (with $h=\varphi$) respectively and denote by $\cL^{\varphi*}$ the adjoint operator of $\cL^\varphi$. Then, $\cL^{\varphi*}(\varphi\hat\varphi)(t,x)=0$ for all $(t,x)\in \cA_\varphi$.
\end{thm}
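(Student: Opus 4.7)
The plan is to verify the three assertions separately, with the third one being the substantive part. The first two are essentially unpacking of previously established facts.

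For the harmonicity of $\varphi$, I would simply invoke Theorem \ref{THeorem:varphi_hatvarphi solution to Schrodinger system}: under the standing hypotheses, $\varphi$ is of class $C^{1,\infty}$ and $\cL\varphi=0$ on $(0,T)\times\R^n$, which is precisely Definition \ref{Definition:HarmonicFunction}. The mean-value property then follows from the Chapman–Kolmogorov identity for the transition density $p$: for $0\le s\le t\le T$,
\begin{equation*}
\int_{\R^n}\varphi(t,y)P_{s,t}(x,dy)=\int_{\R^n}\int_{\R^n}\g(z)p(t,y,T,z)p(s,x,t,y)\,dz\,dy=\int_{\R^n}\g(z)p(s,x,T,z)\,dz=\varphi(s,x),
\end{equation*}
where Fubini applies because $\g$ is bounded with compact support.

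The substantive claim is $\cL^{\varphi*}(\varphi\hat\varphi)=0$ on $\cA_\varphi$, and the plan is to establish it weakly via a test-function argument and then upgrade to a pointwise identity by continuity. The key observation is that, because $\cL\varphi=0$, Lemma \ref{Theorem:ProductRule} collapses the product rule to $\cL(\varphi f)=\varphi\,\cL^\varphi f$ on $\cA_\varphi$ for any $f\in C^{1,2}$. Pick an arbitrary $\phi\in C_c^\infty((0,T)\times\R^n)$ with $\operatorname{supp}(\phi)\subseteq\cA_\varphi$. By the definition of the $L^2([0,T]\times\R^n)$-adjoint and the product-rule identity,
\begin{equation*}
\int \phi\,\cL^{\varphi*}(\varphi\hat\varphi)\,dt\,dx=\int (\cL^\varphi\phi)(\varphi\hat\varphi)\,dt\,dx=\int \cL(\varphi\phi)\,\hat\varphi\,dt\,dx.
\end{equation*}
Now move $\cL$ onto $\hat\varphi$ using the adjoint relation $\int (\cL u) v\,dt\,dx=\int u\,\cL^* v\,dt\,dx$, valid here because $\varphi\phi$ has compact support in $(0,T)\times\R^n$ (killing the temporal boundary terms) and the spatial/non-local integrations by parts are legitimate given the $C^{1,\infty}$-regularity of $\hat\varphi$ from Theorem \ref{Theorem:DensityKunita} together with the boundedness assumptions on $\f,\g$. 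This yields
\begin{equation*}
\int \cL(\varphi\phi)\,\hat\varphi\,dt\,dx=\int (\varphi\phi)\,\cL^*\hat\varphi\,dt\,dx=0,
\end{equation*}
since $\cL^*\hat\varphi=0$ by the second equation of \eqref{Equation:LevySchrodingerSystem}. Thus $\int \phi\,\cL^{\varphi*}(\varphi\hat\varphi)\,dt\,dx=0$ for every such $\phi$. Since $\cA_\varphi$ is open (as $\varphi$ is continuous) and $\cL^{\varphi*}(\varphi\hat\varphi)$ is a continuous function on $\cA_\varphi$ by the regularity of $\varphi,\hat\varphi$ and the structure of $\cL^{\varphi*}$, the fundamental lemma of the calculus of variations gives $\cL^{\varphi*}(\varphi\hat\varphi)(t,x)=0$ for every $(t,x)\in\cA_\varphi$.

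The main obstacle I anticipate is bookkeeping around the adjoint: verifying that the integration by parts that transfers $\cL$ to $\cL^*$ is legitimate despite $\hat\varphi$ not having compact support. The compact support of $\varphi\phi$ in $(0,T)\times\R^n$ handles the temporal and spatial local boundary terms, and the rapid spatial decay of $p(0,w,t,\cdot)$ combined with the compact support of $\f$ provides enough decay of $\hat\varphi$ to handle the non-local term in $L^*$. A secondary, minor point is ensuring that $\cL^{\varphi*}(\varphi\hat\varphi)$ has enough regularity on $\cA_\varphi$ to convert the weak identity into a pointwise one; this is where the $C^{1,\infty}$-regularity furnished by Theorem \ref{Theorem:DensityKunita} carries the argument.
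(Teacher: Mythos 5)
Your proposal is correct and follows essentially the same route as the paper's proof: invoking Theorem \ref{THeorem:varphi_hatvarphi solution to Schrodinger system} for harmonicity, using Lemma \ref{Theorem:ProductRule} to collapse $\cL^\varphi\phi$ to $\cL(\varphi\phi)/\varphi$ on $\cA_\varphi$, and then passing $\cL$ to $\cL^*$ against the compactly supported test function to land on $\cL^*\hat\varphi=0$. Your explicit attention to the integration-by-parts bookkeeping and the weak-to-pointwise upgrade is a welcome elaboration, but it is the same argument the paper gives.
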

\begin{proof}
The fact that $\varphi$ is harmonic and satisfies (\ref{Equation:MeanValueProperty}) follows directly from (\ref{Equation:phi-definition}) and Theorem \ref{THeorem:varphi_hatvarphi solution to Schrodinger system}. Since $\varphi=\h$ and $\hat\bP=\bP^\h$, we know that $\hat\bP$ coincides with the $h$-transform of the reference measure $\bP$ by $\varphi$. Thus, $\cL^{\hat\bP}=\cL^\varphi$, i.e., $\cL^\varphi$ is the operator associated with $\hat\bP$. Recall that $\cL^\varphi$ is well defined and nontrivial at $(t,x)\in\cA_\varphi$ where $\varphi(t,x)>0$. Observe that at such $(t,x)$, $\cL^{\varphi*}$ is also well defined by (\ref{JumpDiffusionAdjoint}).

Given any test function $\eta\in C^{1,\infty}_c(\cA_\varphi)$, by Lemma \ref{Theorem:ProductRule}, we know that $\cL^\varphi\eta=\frac{\cL(\eta\varphi)}{\varphi}$ on $\cA_\varphi$. Therefore, if we denote by $\inn{\cdot,\cdot}$ the $L^2$ inner product over $\cA_\varphi$, then
\begin{equation*}
\inn{\cL^\varphi\eta,\varphi\hat\varphi}=\inn{\frac{\cL(\eta \varphi)}{\varphi},\varphi\hat\varphi}
    =\inn{\cL(\eta \varphi),\hat\varphi}
    =\inn{\eta \varphi,\cL^*\hat\varphi}
    =0,
\end{equation*}
where we use the fact that $\cL^*\hat\varphi=0$. We conclude that $\cL^{\varphi*}(\varphi\hat\varphi)=0$ on $\cA_\varphi$.
\end{proof}

So far we have been working with the case when $\varphi,\hat\varphi$ can inherit the regularity from the transition density function $p(s,x,t,y)$, to which end we require $\f,\g$ to be bounded and compactly supported. In fact, the properties of $p(s,x,t,y)$ allow us to relax the conditions on $\f,\g$. For example, (1) if $\g$ has at most polynomial growth and $\f$ is such that $\f\cdot\bR_0$ decreases sufficiently fast with respect to $\partial_y^\mathbf{j}p(s,\cdot,t,y)$, then the conclusions in Theorem \ref{THeorem:varphi_hatvarphi solution to Schrodinger system} remain true; (2) if $p(s,x,t,y)$ satisfies heat-kernel-type estimates \cite{grzywny2019heat,chen2018heat,czq2016heatkernel,kim2018heat,chen2018perturbation}, the collection of $\f,\g$ that will produce regular $\varphi,\hat\varphi$ can be further expanded. 

Even if $\f,\g$ are completely general and $p(s,x,t,y)$ is only known to satisfy Theorem \ref{Theorem:DensityKunita}, we can carry out the approximation procedure illustrated  in Section \ref{Subsec:Approximation} and obtain $\varphi,\hat\varphi$ as the limits of $C^{1,\infty}$-functions. 
To be specific, for every $k\geq1$, we define, for $(t,x)\in[0,T]\times\R^n$,
\begin{equation*}
    \begin{dcases}
        \varphi_k(t,x):=\int_{B_0(k)}(\g(y)\wedge k)p(t,x,T,y)dy, \\
        \hat\varphi_k(t,x):=\int_{B_0(k)}(\f(w)\wedge k)p(0,w,t,x)\bR_0(w)dw.  
    \end{dcases}
\end{equation*}
By Theorem \ref{THeorem:varphi_hatvarphi solution to Schrodinger system}, $\varphi_k,\hat\varphi_k$ are $C^{1,\infty}$ functions and  $k\geq 1$,  $\cL\varphi_k=0, \cL^*\hat\varphi_k=0$ for every $k\geq 1$. Moreover, as a simple application of the monotone convergence theorem, $\varphi_k \xrightarrow{k\to\infty}\varphi$ and $\hat\varphi_k \xrightarrow{k\to\infty}\hat\varphi$ point-wise, and hence $\varphi_k(t,x)\hat\varphi_k(t,x)\xrightarrow{k\to\infty}\hat\bP_t(x)$ for every $(t,x)\in[0,T]\times\R^n$. Finally, applying Scheff\'e's lemma, we arrive at the statement that the sequence of $h$-transforms $\bP^{\varphi_k}:=\frac{\varphi_k(T,X_T)}{\varphi_k(0,X_0)}\bP$ converges to $\hat\bP$ in the strong topology as $k\to\infty$.\\

\noindent \textbf{Remarks.} We conclude this subsection with two remarks regarding the transition density function. First, the existence of a smooth transition density function and the conclusions in Theorem \ref{Theorem:DensityKunita} may be established under different sets of conditions. For example, it is shown in \cite{kunita2013nondegenerate} (Section 6) that if $a:=\sigma\sigma^T$ is positive definite everywhere, $\nu$ has a ``very small tail'' in the sense that $\int_{\R^\ell}|z|^p\nu(dz)<\infty$ for all $p\geq 2$ (but $\nu$ does not need to satisfy the order condition), and $\gamma,\nabla\gamma$ satisfy some growth bound, 
then the corresponding jump diffusion admits a smooth transition density function. H\"ormander-type conditions are also discussed in \cite{kunita2013nondegenerate}. 
The existence of a smooth transition density function is also established in \cite{kunita2011analysis} under a uniform ellipticity\footnote{The condition adopted in \cite{kunita2011analysis} is $ |a(t,x)|\geq \frac{c_0}{(1+|x|)^{n_0}}$ for some constants $c_0,n_0$ uniformly in $t$.} condition on $a$.

Lastly, we note that the PIDE representation \eqref{Equation:LevySchrodingerSystem} of the Schr\"odinger system applies in more general settings and does not require the existence of a (smooth) transition density function. For example, the simple Poisson jump process discussed in Section \ref{Section:Poisson-Example} clearly does not possess a transition density function, but the corresponding functions $\varphi(t,x)=e^{(\lambda-\mu)t}(\mu/\lambda)^x$ and $\hat\varphi(t,x)=e^{-\lambda t}\frac{(\lambda t)^x}{x!}$ are indeed solutions to the PIDEs
\begin{equation*}
\begin{cases}
    \cL \varphi(t,x)=\frac{\partial \varphi}{\partial t}(t,x) +\lambda \pran{\varphi(t,x+1)-\varphi(t,x)}=0,\\
    \cL^*\hat\varphi(t,x)=\frac{\partial \hat\varphi}{\partial t}(t,x)+ \lambda(\hat\varphi(t,x)-\hat\varphi(t,x-1)) = 0,
\end{cases}
\end{equation*}
and the product $\varphi(t,x)\hat\varphi(t,x)=\exp(-\mu t)\frac{(\mu t)^x}{x!}$ is the probability mass function of the Schr\"odinger bridge in that example for all $t\in[0,T]$, $x\in\N$.
\subsection{Jump Diffusions with Stable-Like Jump Component}
Finally, we will examine a model for which Assumptions \ref{Assumptions-(C)} may be relaxed and \ref{Assumption:Hypo} may be absent. For this part, we take the setup from \cite{czq2016heatkernel} and consider a jump-diffusion operator $L$ with an \textit{$\alpha$-stable-like} jump component, and that is, for $f\in C_c^\infty(\R^n)$ and $(t,x)\in\R^+\times\R^n$, 
\begin{equation}\label{equation: L mix type}
\begin{aligned}
    Lf(t,x) := b(t,x)\cdot \nabla f(x)&+\frac{1}{2} \sum_{i,j}a_{ij}(t,x)\frac{\partial^2f}{\partial x_i \partial x_j}(x) \\
    &+\int_{\R^n} \brac{f(x+\xi)-f(x)-\one_{|\xi|\leq 1}\nabla f(x)\cdot \xi}\frac{\kappa(t,x,\xi)}{|\xi|^{n+\alpha}}d\xi,
\end{aligned}
\end{equation}
where $0 < \alpha <2 $ and the coefficients satisfy:
\begin{enumerate}
    \item $b$ is non-negative, measurable and bounded,
    \item $a$ is non-negative, measurable, and there exist $c_1>0$ and $\beta \in (0,1)$ such that for all $t>0$ and $x,y\in\R^n$,
    \begin{equation*}
        \abs{a(t,x)-a(t,y)}\leq c_1 \abs{x-y}^\beta,
    \end{equation*}
    \item there exists $c_2\geq 1$ such that for all $t>0$ and $x\in\R^n$,
    \begin{equation*}
        \frac{1}{c_2}\text{Id}_{n\times n} \leq a(t,x) \leq c_2\text{Id}_{n\times n},
    \end{equation*}
    where $\text{Id}_{n\times n}$ is the $n\times n$ identity matrix, and
    \item $\kappa$ is non-negative, measurable and bounded; when $\alpha=1$, for every $t>0$, $x\in\R^n$ and $r>0$,
    \begin{equation*}
        \int_{|\xi|\leq r} \xi\kappa(t,x,\xi)d\xi=0.
    \end{equation*}
\end{enumerate}

We note that the expression for $L$ in \eqref{equation: L mix type} does not quite match \eqref{ReferenceGenerator}. Indeed, it is always possible to rewrite the jump component in \eqref{ReferenceGenerator} as
\begin{equation*}
    \int_{\R^n}\brac{f(x+\xi)-f(x)-\one_{|\xi| \leq 1}\xi \cdot \nabla f(x)} M(t,x,d\xi),
\end{equation*}
where $M(t,x,E):=\nu\pran{\set{z\in\R^\ell:\gamma(t,x,z)\in E}}$ for Borel $E\subseteq\R^n$, and adjust the drift term to account for the change in the integration region of small jumps \cite{kurtz2011equivalence}. In the case when $M(t,x,\cdot)$ is of the $\alpha$-stable-like form, i.e.,
\begin{equation*}
    M(t,x,d\xi)=\frac{\kappa(t,x,\xi)}{|\xi|^{n+\alpha}}d\xi,
\end{equation*}
the operator $L$ has been extensively studied in recent years \cite{grzywny2019heat,foondun2009heat,chen2018heat,chen2016heat,chen2016uniqueness,jin2017heat,czq2016heatkernel,bass2009regularity}. In particular, it is shown in \cite{czq2016heatkernel} (Theorem 1.1 and Theorem 4.1) that there exists a unique non-negative continuous function $p(s,x,t,y)$ such that $\set{p(s,x,t,y):0\leq s< t, \;x,y\in\R^n}$ forms a family of transition density functions\footnote{That is, for each $0\leq s\leq t$, $\int_{\R^n}p(s,x,t,y)dy=1$ and the family of $p(s,x,t,y)$ satisfies the Chapman-Kolmogorov equation.}, and if for $f\in C_c^2(\R^n)$, we define \[
    P_{s,t}f(x):=\int_{\R^n}f(y)p(s,x,t,y)dy,
\] 
then 
\[
    P_{s,t}f(x)=f(x)+\int_s^tP_{s,r}Lf(r,x)dr.
\]

Let us now examine the SBP for $L$ in (\ref{equation: L mix type}). It is clear that \ref{Assumption:MartingaleProblem-s,x} is satisfied in the current setting. However, we do not have sufficient information on the regularity of $p(s,x,t,y)$. Neither do we have any evidence to support \ref{Assumption:Hypo}. To overcome this obstacle, we again turn to an approximation approach and adopt the mollification method outlined in \cite{czq2016heatkernel}.

To be specific, we take a non-negative function $\psi\in C_c^\infty(\R^n)$ such that $\int_{\R^n}\psi=1$, and for $\eps>0$, set $\psi_\eps(\cdot):=\frac{1}{\eps^n}\psi(\frac{\cdot}{\eps})$. Further, we choose a sequence $\set{\eps_m : m \geq 1}\subseteq(0,1)$ that decreases to 0, and define, for every $m\geq1$, $b_m:=b\ast\psi_{\eps_m}$, $\kappa_m:=\kappa \ast\psi_{\eps_m}$ and denote by $L_m$ the operator in (\ref{equation: L mix type}) with $b,\kappa$ replaced by $b_m,\kappa_m$.
Then, following the proof of Theorem 4.1 in \cite{czq2016heatkernel}, we obtain that $L_m$ admits a transition density $p_m(s,x,t,y)$ which, as a function in $(s,x)$, is of class $C^{1,2}$, and hence \ref{Assumption:Hypo} holds for $L_m$. Moreover, as $m\to\infty$, $p_m(s,x,t,y)$ converges to $p(s,x,t,y)$ uniformly on every compact set along some subsequence.\\

We return to the original operator $L$ for a moment. Let $\bR$ be the reference measure associated with $L$, $\rho_0,\rho_T$ be target endpoint distributions, and $\bP$ be as in \eqref{eq:def of bP}. Assume $(\f,\g)$ is the solution to the Schr\"odinger system (\ref{Equation:fgSchrodingerSystem}) and define $\h$ as in (\ref{def:frak_h}). By repeating the procedure in Section \ref{Subsec:Approximation}, we can find a sequence $\set{g_k}_{k\geq1}\subseteq C_c^\infty(\R^n)$ such that $\norm{g_k}_{\text{u}}\leq k$ for every $k\geq1$, and if \[h_k(t,x):=\int_{\R^n}g_k(y)p(t,x,T,y)dy,\]then $h_k(t,x)\to\h(t,x)$ as $k\to\infty$ for every $(t,x)\in[0,T]\times\R^n$. Now we will use $\set{g_k}_{k\geq1}$ to define another sequence of functions: for every $m\geq1$, \[
h^{(m)}_k(t,x):=\int_{\R^n}g_k(y)p_m(t,x,T,y)dy.
\]Since \ref{Assumption:Hypo} holds for every $L_m$, $h_k^{(m)}$ is harmonic for every $m\geq1,k\geq1$. We will prove below that we can extract from $\set{h_k^{(m)}}_{k,m\geq1}$ an approximation of $\h$. 
\begin{lem}\label{lem:convergence of subsequence of h}
There exists a subsequence $\set{m_k:k\geq1}$ such that
\begin{equation*}
    \lim_{k\to\infty}h_k^{(m_k)}(t,x)=\h(t,x)\text{ for every }(t,x)\in[0,T]\times\R^n.
\end{equation*}
\end{lem}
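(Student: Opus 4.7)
The plan is to combine the uniform-on-compacts convergence $p_m\to p$ (already extracted along a subsequence in the paragraph preceding the lemma) with the pointwise convergence $h_k\to\h$, via a diagonal extraction. First, I would fix $k\geq 1$ and observe that, because $g_k\in C_c^\infty(\R^n)$ satisfies $\norm{g_k}_{\text{u}}\leq k$ and has compact support, say contained in $B_0(R_k)$ for some $R_k>0$,
\begin{equation*}
|h_k^{(m)}(t,x)-h_k(t,x)|\leq k\int_{B_0(R_k)}|p_m(t,x,T,y)-p(t,x,T,y)|\,dy
\end{equation*}
for every $(t,x)\in[0,T]\times\R^n$. For $t\in[0,T)$, the uniform-on-compacts convergence of $p_m\to p$ forces the right-hand side to tend to $0$ uniformly in $(t,x)$ on any compact subset of $[0,T)\times\R^n$, since the integration region $B_0(R_k)$ is itself compact. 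At the other extreme $t=T$ the two functions coincide trivially, $h_k^{(m)}(T,x)=g_k(x)=h_k(T,x)$.

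Next, I would fix an exhausting sequence of compact sets $K_1\subseteq K_2\subseteq\cdots$ in $[0,T]\times\R^n$ with $\bigcup_{k\geq 1}K_k=[0,T]\times\R^n$, and use the preceding step to select, for each $k$, an index $m_k\geq k$ such that
\begin{equation*}
\sup_{(t,x)\in K_k,\,t\leq T-1/k}|h_k^{(m_k)}(t,x)-h_k(t,x)|<\frac{1}{k}.
\end{equation*}
For any fixed $(t,x)\in[0,T]\times\R^n$, either $t=T$, in which case $h_k^{(m_k)}(T,x)=h_k(T,x)$ for all $k$, or $t<T$, in which case $(t,x)\in K_k$ and $t\leq T-1/k$ for every $k$ sufficiently large, yielding $|h_k^{(m_k)}(t,x)-h_k(t,x)|<1/k$. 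Combining this with the known pointwise convergence $h_k(t,x)\to\h(t,x)$ via the triangle inequality gives $h_k^{(m_k)}(t,x)\to\h(t,x)$, as desired.

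The main obstacle is the boundary $t=T$: the transition density $p(t,x,T,y)$ develops a delta-type singularity at $y=x$ as $t\uparrow T$, so the uniform-on-compacts convergence $p_m\to p$ cannot be extended to a neighborhood of $\{t=T\}$ in $(s,x,t,y)$-space. This issue is sidestepped by the trivial identity $h_k^{(m)}(T,\cdot)=g_k=h_k(T,\cdot)$ and by restricting the diagonal estimate to the region $\{t\leq T-1/k\}$, which still captures every point $(t,x)$ with $t<T$ for all $k$ large enough. No additional regularity of $\h$ is needed, since the argument only uses pointwise convergence $h_k\to\h$ and a single uniform estimate per compact slice.
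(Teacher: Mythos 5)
Your proof is correct, and it takes a genuinely different route from the paper's. The paper treats the difference $h_k^{(m_k)}-h_k$ by viewing $p_{m}(t,x,T,\cdot)\,dy$ as a sequence of measures converging weakly to $p(t,x,T,\cdot)\,dy$, invoking tightness to produce compacts $D_k$ on whose complements both transition measures have mass at most $k^{-3}$, and then splitting the integral into a tail contribution (controlled by tightness and the uniform bound $\norm{g_k}_{\mathrm{u}}\leq k$) plus a $D_k$-contribution (controlled by the uniform convergence $p_{m_k}\to p$). You instead observe that since each $g_k\in C_c^\infty$ has compact support $B_0(R_k)$, the integration region is \emph{already} compact, so the tightness argument is superfluous and the difference $h_k^{(m)}-h_k$ is controlled purely by uniform-on-compacts convergence of $p_m\to p$ over a compact set in $(t,x,y)$; a diagonal extraction over an exhausting family $K_k$ then does the rest. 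Your version is cleaner and, notably, explicitly isolates the boundary $t=T$ via the identity $h_k^{(m)}(T,\cdot)=g_k=h_k(T,\cdot)$, which the paper's proof leaves implicit even though the estimate $\abs{p_{m_k}(t,x,T,y)-p(t,x,T,y)}\leq 1/(k^2\,\mathrm{vol}(D_k))$ cannot literally hold at $t=T$ where the kernel degenerates. The one thing the paper's route buys is that it never uses the compact support of $g_k$, only the bound $\norm{g_k}_{\mathrm{u}}\leq k$; but since the $g_k$ are constructed in $C_c^\infty$ anyway, that extra generality is unused here, and your argument is the more economical of the two.
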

\begin{proof}
    For every $(t,x)$, set $P_{t,T}^{(m)}(x,dy):=p_m(t,x,T,y)dy$. Since  $p_m\xrightarrow{m\to\infty} p$, we have $P_{t,T}^{(m)}(x,dy)\xrightarrow{m\to\infty}P_{t,T}(x,dy)$, and hence the sequence of measures $\set{P_{t,T}^{(m)}(x,\cdot)}_{m\geq 1}$ is tight. Let $\set{D_k : k \geq 1}$ be a sequence of compact sets in $\R^n$ such that
\begin{enumerate}
    \item $\sup_m P_{t,T}^{(m)}(x,D_k^c) \leq 1/k^3$, and
    \item $P_{t,T}(x,D_k^c) \leq 1/k^3$.
\end{enumerate}
Choose a subsequence $\set{m_k}_{k\geq1}$ such that for all $k\geq1$,
\begin{equation*}
     \abs{p_{m_k}(t,x,T,y)-p(t,x,T,y)}\leq \frac {1}{k^2 \text{vol}(D_k)}
\end{equation*}
for all $t\in[0,T], x\in \overline{B_0(k)}$ and $y\in D_k$. Therefore, for every $(t,x)\in[0,T]\times\R^n$,  
\begin{align*}
    \abs{h_k^{(m_k)}(t,x)-\h(t,x)}&\leq\abs{h_k(t,x)-\h(t,x)}+\abs{h_k^{(m_k)}(t,x)-h_k(t,x)}\\
    &=\abs{h_k(t,x)-\h(t,x)}+\abs{\int_{\R^n}g_k(y)(p_{m_k}(t,x,T,y)-p(t,x,T,y))dy},
\end{align*}
where the first term converges to 0 as $k\to \infty$, as discussed above,
and the second term is bounded from above by
\begin{align*}
    & \int_{D_k^c} \abs{g_k(y)}\abs{p_{m_k}(t,x,T,y)}dy+\int_{D_k^c} \abs{g_k(y)}\abs{p(t,x,T,y)}dy\\
    & \hspace{2in} +\int_{D_k} \abs{g_k(y)}\abs{p_{m_k}(t,x,T,y)dy-p(t,x,T,y)}dy\\
    \leq&\, k \cdot \frac{1}{k^3} + k \cdot \frac{1}{k^3}+ k \cdot \text{vol}(D_k) \cdot  \frac {1}{k^2 \text{vol}(D_k)}\,\text{  (according to the choice of $D_k,m_k$),}
\end{align*}
which also tends to $0$ as $k\to \infty$.   We conclude that $h_k^{(m_k)}(t,x)\xrightarrow{k\to\infty}\h(t,x)$ point-wise. 
\end{proof}
Finally, we are ready to interpret the solution to the SBP in this approximation context. For every $k\geq1$, by Assumption \ref{Assumption:Hypo}, $h_k^{(m_k)}$ is harmonic and satisfies the mean-value property \eqref{Equation:MeanValueProperty} with respect to $L_{m_k}$. Let $\bP_k$ be the solution to the martingale problem for $(L_{m_k},\rho_0)$, and $\set{X_t}_{[0,T]}$
be the canonical process under $\bP_k$. Define \begin{equation}\label{eq:definition of bQ_k}
  \bQ_k:=\frac{h_k^{(m_k)}(T,X_T)}{h_k^{(m_k)}(0,X_0)}\frac{\one_{\set{X_0\in A_k}}}{r_k}\bP_k,  
\end{equation} 
where $A_k:=\set{x\in\R^n:h_k^{(m_k)}(0,x)>0}$ and $r_k:=\rho_0(A_k)$. Then, $\bQ_k$ is the $h$-transform of $\bP_k$ by $h_k^{(m_k)}$. Note that $\bQ_k$ is not necessarily related to the SBP for $\bP_k$ with the original target endpoint distributions $(\rho_0,\rho_T)$, and that is because $h_k^{(m_k)}$ comes from $(\f,\g)$ that solves the Schr\"odinger system for $L$ rather than $L_{m_k}$. However, we will show that the sequence $\set{\bQ_k}_{k\geq 1}$ converges to $\hat\bP=\bP^\h$ in the sense of convergence in finite-dimensional distributions. In other words, we can achieve an approximation to the original SBP by considering the counterpart problem for the ``mollified'' version of the operator. 
\begin{thm}\label{Thm:approximation of P^ in finite dimensional dist}
     Assume that $\hat\bP$ is the solution to the original SBP \eqref{Definition:DynamicSBP} with the reference measure $\bR$ associated with the operator $L$ in \eqref{equation: L mix type} and target endpoint distributions $\rho_0,\rho_T$. Let $\bP,\f,\g,\h$ and $\set{\bP_k}_{k\geq1}$ be as described above. Choose the sequence  $\set{h_k^{(m_k)}}_{k\geq1}$ as in Lemma \ref{lem:convergence of subsequence of h} and define $\bQ_k$ as in \eqref{eq:definition of bQ_k} for every $k\geq1$. Then, the finite-dimensional distributions under $\bQ_k$ converge to those under $\hat\bP$ as $k\to \infty$.
 \end{thm}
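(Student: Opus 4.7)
The plan is to reduce the claim to $L^1$-convergence of the joint densities of $(X_{t_0},\dots,X_{t_N})$ under $\bQ_k$ and under $\hat\bP$, and then to invoke Scheff\'e's lemma. Fix $0=t_0<t_1<\cdots<t_N\leq T$ and a bounded measurable $F:(\R^n)^{N+1}\to\R$. The goal is to show $\bE_{\bQ_k}[F(X_{t_0},\dots,X_{t_N})]\to\bE_{\hat\bP}[F(X_{t_0},\dots,X_{t_N})]$ as $k\to\infty$.

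First I would derive explicit integral expressions for both expectations. Using that $\set{X_t}$ is Markov under $\bP_k$ with transition density $p_{m_k}$, and that $h_k^{(m_k)}$ satisfies the mean-value property with respect to $L_{m_k}$ (so $\bE_{\bP_k}[h_k^{(m_k)}(T,X_T)\mid\cF_{t_N}]=h_k^{(m_k)}(t_N,X_{t_N})$), one integrates out the $X_T$ coordinate to obtain
\[
\bE_{\bQ_k}[F]=\frac{1}{r_k}\int F(x_0,\dots,x_N)\,f_k(x_0,\dots,x_N)\,\rho_0(dx_0)\,dx_1\cdots dx_N,
\]
with
\[
f_k(x_0,\dots,x_N):=\one_{A_k}(x_0)\frac{h_k^{(m_k)}(t_N,x_N)}{h_k^{(m_k)}(0,x_0)}\prod_{i=1}^Np_{m_k}(t_{i-1},x_{i-1},t_i,x_i).
\]
An analogous computation for $\hat\bP=\frac{\h(T,X_T)}{\h(0,X_0)}\bP$, combined with the Schr\"odinger relation $\f(x_0)\h(0,x_0)=d\rho_0/d\bR_0(x_0)$ (which ensures $\rho_0(S_0)=0$ and $\h(0,\cdot)>0$ $\rho_0$-a.e.), yields
\[
\bE_{\hat\bP}[F]=\int F(x_0,\dots,x_N)\,f(x_0,\dots,x_N)\,\rho_0(dx_0)\,dx_1\cdots dx_N,
\]
with $f(x_0,\dots,x_N):=\frac{\h(t_N,x_N)}{\h(0,x_0)}\prod_{i=1}^Np(t_{i-1},x_{i-1},t_i,x_i)$. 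A direct mean-value computation then gives $\int f_k\,d(\rho_0\otimes\mathrm{Leb}^N)=r_k$ and $\int f\,d(\rho_0\otimes\mathrm{Leb}^N)=1$.

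Next I would verify pointwise $(\rho_0\otimes\mathrm{Leb}^N)$-a.e.\ convergence $f_k\to f$. By construction in Section \ref{Section:smooth-coefficients}, the subsequence $\set{m_k}$ furnished by Lemma \ref{lem:convergence of subsequence of h} is extracted within a sequence along which $p_m\to p$ uniformly on compacts, so $p_{m_k}(s,x,t,y)\to p(s,x,t,y)$ for all $0\leq s<t\leq T$ and $x,y\in\R^n$. Lemma \ref{lem:convergence of subsequence of h} also supplies $h_k^{(m_k)}(t,x)\to\h(t,x)$ for every $(t,x)$. For $\rho_0$-a.e.\ $x_0$ one has $\h(0,x_0)>0$, hence the denominators are bounded away from zero eventually and $\one_{A_k}(x_0)\to 1$; combining these gives $f_k\to f$ pointwise a.e. A Fatou argument applied to $\one_{A_k}(x_0)$, together with the pointwise convergence of $h_k^{(m_k)}(0,\cdot)$, yields $\liminf_k r_k\geq \rho_0(\set{x:\h(0,x)>0})=1$, and therefore $r_k\to 1$.

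With nonnegative densities converging pointwise a.e.\ and their total integrals tending to the right value, Scheff\'e's lemma gives $f_k\to f$ in $L^1(\rho_0\otimes\mathrm{Leb}^N)$. For bounded $F$,
\[
\abs{\bE_{\bQ_k}[F]-\bE_{\hat\bP}[F]}\leq \frac{\norm{F}_{\mathrm{u}}}{r_k}\int\abs{f_k-f}\,d(\rho_0\otimes\mathrm{Leb}^N)+\abs{\frac{1}{r_k}-1}\norm{F}_{\mathrm{u}}\xrightarrow{k\to\infty}0,
\]
which completes the argument. The step I expect to demand the most care is the control of the ratios $h_k^{(m_k)}(t_N,x_N)/h_k^{(m_k)}(0,x_0)$ in the limit: the numerators and denominators are generated by distinct mollified operators $L_{m_k}$, and one must simultaneously check that (i) the denominator does not collapse to zero on a set of positive $\rho_0$-mass and (ii) the interplay between $\one_{A_k}$ and the pointwise limit is compatible with passing $r_k\to 1$. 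Both points are handled by the Schr\"odinger relation $\f\cdot\h(0,\cdot)=d\rho_0/d\bR_0$ and Fatou's lemma, as sketched above.
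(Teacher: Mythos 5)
Your proof is correct and follows essentially the same route as the paper's: expand the finite-dimensional distributions under $\bQ_k$ and $\hat\bP$ as integrals against the transition densities $p_{m_k}$ and $p$, use Lemma \ref{lem:convergence of subsequence of h} for pointwise convergence of the $h$-ratios, and a Fatou argument for $r_k\to 1$. The one respect in which you go further than the paper is that you make the passage from pointwise convergence of the integrands to convergence of the integrals explicit via Scheff\'e's lemma (using that both sides have total mass $r_k$ and $1$ respectively), whereas the paper's proof states ``Consequently'' and leaves this step implicit; your version is the more complete one.
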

 \begin{proof}
Fix $0 = t_0 \leq t_1 \leq \dots \leq t_N = T$, and $B_0,B_1,\dots,B_N \in \cB_{\R^n}$. We have 
     \begin{equation*}
     \begin{aligned}    \bQ_k\pran{\bigcap_{i=0}^N \set{X_{t_i}\in B_i}} = \frac{1}{r_k} \int_{B_0\cap A_k}&\dots\int_{B_N}\frac{h_k^{(m_k)}(T,x_N)}{h_k^{(m_k)}(0,x_0)}\\
     &\cdot\prod_{i=0}^{N-1}p_{m_k}(t_i,x_i,t_{i+1},x_{i+1})dx_Ndx_{N-1}\dots dx_1\rho_0(dx_0),
     \end{aligned}
     \end{equation*}
and 
     \begin{equation*}
     \begin{aligned}    \bP^{\h}\pran{\bigcap_{i=0}^N \set{X_{t_i}\in B_i}} = \int_{B_0}\dots\int_{B_N}&\frac{\h(T,x_N)}{\h(0,x_0)}\\
     &\cdot\prod_{i=0}^{N-1}p(t_i,x_i,t_{i+1},x_{i+1})dx_Ndx_{N-1}\dots dx_1\rho_0(dx_0).
     \end{aligned}
     \end{equation*}
Since $h_k^{(m_k)}$ satisfies the mean-value property and converges to $\h$ as $k\to\infty$, we can mimic the proof of Theorem \ref{thm:P^hk strong conv to hatP} to show that $\lim_{k\to\infty}r_k=1$. Therefore, by the previous lemma, 
    \begin{equation*}
        \frac{\one_{A_k}(x_0)}{r_k}\frac{h_k^{(m_k)}(T,x_N)}{h_k^{(m_k)}(0,x_0)}\xrightarrow{k\to\infty}\frac{\h(T,x_N)}{\h(0,x_0)}
    \end{equation*}
Moreover, $p_{m_k}(t_i,x_i,t_{i+1},x_{i+1})\xrightarrow{k\to\infty}p(t_i,x_i,t_{i+1},x_{i+1})\,\,\text{ for }i=0,\dots,N-1$. Consequently,  
\begin{equation*}
    \lim_{k\to\infty}\bQ_k\pran{\bigcap_{i=0}^N \set{X_{t_i}\in B_i}} =\bP^{\h}\pran{\bigcap_{i=0}^N \set{X_{t_i}\in B_i}}=\hat\bP\pran{\bigcap_{i=0}^N \set{X_{t_i}\in B_i}}.
\end{equation*}
\end{proof}
In conclusion, by mollifying the operator $L$, the solution $\hat\bP$ to the original SBP is the limit of a sequence of path measures $\set{\bQ_k:k\geq 1}$ in the sense of convergence of finite-dimensional distributions, where for each $k\geq 1$, $\bQ_k$ is the $h$-transform, by a harmonic function, of the path measure associated with the mollified version of $L$.
\section{Disclosure of interest}
The authors report there are no competing interests to declare. \footnote{Word count: 11148.}
\bibliographystyle{plain} 
\bibliography{main-refs}  
\end{document}